\def\R{\mathbb{R}}
\def\Z{\mathbb{Z}}
\def\C{\mathbb{C}}
\def\N{\mathbb{N}}
\def\dx{\,\mathrm{d}x}
\theoremstyle{plain}
\newtheorem{theorem}{Theorem}[section]
\newtheorem{proposition}[theorem]{Proposition}
\newtheorem{lemma}[theorem]{Lemma}
\newtheorem{corollary}[theorem]{Corollary}
\newtheorem{remark}[theorem]{Remark}
\title[Bifurcation of double eigenvalues for Aharonov--Bohm
operators]{Bifurcation of double eigenvalues for Aharonov--Bohm
  operators with a moving pole} \author{Laura Abatangelo \and Veronica
  Felli}
\address{Laura Abatangelo
  \newline \indent Dipartimento di Matematica
  \newline \indent Politecnico di Milano
  \newline\indent Piazza Leonardo da Vinci 32, 20133 Milano, Italy}
\email{laura.abatangelo@polimi.it}
\address{Veronica Felli
  \newline \indent Dipartimento di Matematica e Applicazioni
  \newline \indent
Universit\`a degli Studi di Milano–Bicocca
\newline\indent Via Cozzi 55, 20125 Milano, Italy}
\email{veronica.felli@unimib.it}
\date{\today}
\begin{document}

\begin{abstract}
  We study double eigenvalues of Aharonov--Bohm operators with
  Dirichlet boundary conditions in planar domains containing the
  origin. We focus on the behavior of double eigenvalues when the
  potential's circulation is a fixed half-integer number and the
  operator's pole is moving on straight lines in a neighborhood of the
  origin.  We prove that bifurcation occurs if the pole is moving
  along straight lines in a certain number of cones with positive
  measure. More precise information is given for symmetric
    domains; in particular, in the special case of the disk, any
  eigenvalue is double if the pole is located at the centre, but there
  exists a whole neighborhood where it bifurcates into two distinct
  branches.
\end{abstract}

\maketitle

\section{Introduction}

For any $a=(a_1,a_2) \in \R^2$, the Aharonov--Bohm potential with pole
$a$ and circulation $1/2$ is defined as
\[
A_a(x_1,x_2):=\frac12 \left( - \frac{x_2 - a_2}{(x_1 - a_1)^2 + (x_2 - a_2)^2}, 
\frac{x_1 - a_1}{(x_1 - a_1)^2 + (x_2 - a_2)^2}\right).
\]
The vector potential $A_a$ produces a $\delta$-type magnetic field,
which gives rise to the so-called \emph{Aharonov--Bohm effect} in
quantum mechanics, see \cite{adami-teta,aharonov-bohm}: a particle is
affected by the presence of a magnetic field, even if this is zero
almost everywhere. This excited a great interest in the physicists'
community, as the vector potential attached to the magnetic field was
likely responsible for the particle's dynamics.

The number $1/2$ in front of the vector field is the circulation of
$A_a$, up to a normalization of $2\pi$: in this regard, we mention
that the spectrum of the operator would be the same for any
circulation in $\tfrac12+{\Z}$, due to gauge equivalence of the
corresponding vector potentials, see \cite[Theorem
1.2]{L_gauge_invariance} and \cite[Proposition 2.2]{Lena2015}.

We are interested in the 
spectral properties of the magnetic Laplacian with Aharonov--Bohm 
vector potential
\begin{equation} \label{eq:operator}
(i\nabla + A_a)^2 u := -\Delta u + 2 i A_a \cdot 
\nabla u + |A_a|^2 u,
\end{equation}
acting on functions $u \, : \, \R^2 \to \C$.  From the point of view
of Functional Analysis, the differential operator \eqref{eq:operator}
can not be considered as a lower order perturbation of the standard
Laplacian, since the potentials involved are out of the Kato class.
Let us remark that, in principle, one can consider any circulation
$\kappa \in (0,1)$ for the vector potential, but the case $\kappa=1/2$
enjoys very special features which are fundamental in our
analysis. One above all, magnetic problems involving operator
\eqref{eq:operator} correspond to real Laplace problems on the double
covering manifold. Therefore operator \eqref{eq:operator} with
half-integer circulation behaves like a real operator, although it
acts on complex-valued functions, see \cite{HHOO99}. This means,
for example, that the nodal set of eigenfunctions of
\eqref{eq:operator} are made of curves instead of being isolated
points, as in general they would be in case of complex-valued
eigenfunctions, see \cite{FFT}.  We also mention that the
  case of half-integer circulations has a particular interest from the
  mathematical point of view due to applications to the problem of
  spectral minimal partitions, see \cite{BNHHO2009, NT}. Some papers
studying the behavior of simple eigenvalues in the case of
non-half-integer circulations appeared recently: \cite{AFNN2018} deals
with a single pole which is moving along straight lines, and
\cite{FNS2024} considers the case of multiple colliding poles with any
circulation. Other general results on spectral stability and
  regularity of the eigenvalue variation are contained in the paper
\cite{Lena2015}.

Let $\Omega\subset\R^2$ be a bounded, open and  connected
domain.
For any $a\in\Omega$,  we consider the eigenvalue
problem 
\begin{equation}\label{eq:eige_equation_a}\tag{$E_a$}
  \begin{cases}
   (i\nabla + A_{a})^2 u = \lambda u,  &\text{in }\Omega,\\
   u = 0, &\text{on }\partial \Omega,
 \end{cases}
\end{equation}
in the weak sense which is specified in the following lines. 
The functional space
$H^{1 ,a}(\Omega,\C)$ is the completion of
\[
\{u\in
H^1(\Omega,\C)\cap C^\infty(\Omega,\C):u\text{ vanishes in a
  neighborhood of }a\}
\]
 with respect to the norm 
 $$
 \|u\|_{H^{1,a}(\Omega,\C)}=\left(\left\|\nabla u\right\|^2
   _{L^2(\Omega,\C^2)} +\|u\|^2_{L^2(\Omega,\C)}+\big\|\tfrac{u}{|x-a|}
   \big\|^2_{L^2(\Omega,\C)}\right)^{\!\!1/2},
$$
which, in view of the
Hardy type inequality proved in \cite{LW99} (see also \cite[Lemma 3.1
and Remark 3.2]{FFT}), is 
equivalent to the norm 
\begin{equation*}
  \left(\left\|(i\nabla+A_{a}) u\right\|^2
    _{L^2(\Omega,\C^2)} +\|u\|^2_{L^2(\Omega,\C)}\right)^{\!\!1/2}.
\end{equation*}
We denote as $H^{1 ,a}_{0}(\Omega,\C)$ the space obtained as the
closure of $C^\infty_{\rm c}(\Omega\setminus\{a\},\C)$ in
$H^{1,a}(\Omega,\C)$.

For every $a\in\Omega$,  we say that $\lambda\in\R$ is an eigenvalue
of problem \eqref{eq:eige_equation_a}
in a weak sense if there exists $u\in
H^{1,a}_{0}(\Omega,\C)\setminus\{0\}$ (called an eigenfunction) such that
\begin{equation}\label{eq:weaksense}
\int_\Omega (i\nabla u+A_{a} u)\cdot \overline{(i\nabla v+A_{a}
  v)}\,dx=\lambda\int_\Omega u\overline{ v}\,dx \quad\text{for all }v\in H^{1,a}_{0}(\Omega,\C).
\end{equation}
From classical spectral theory (see e.g. \cite[Chapter 6]{B}), the
eigenvalue problem $(E_a)$ admits a sequence of real diverging
eigenvalues
$\lambda_1^a \leq \lambda_2^a\leq\ldots\leq \lambda_j^a\leq\ldots$
(repeated according to their finite multiplicity).  We will study the
spectrum, depending on the position of the pole $a$, in a neighborhood
of a fixed point $b\in \Omega$: without loss of generality, we can
consider $b=0\in\Omega$.

In particular, we are interested in genericity properties of simple
eigenvalues with respect to the location of the pole $a$ near $0$,
where we assume that a certain eigenvalue is double.  A preliminary
result on this topic is provided by the paper
\cite{AbatangeloNys2018}, where sufficient conditions are detected in
order to have double eigenvalues locally only at $0$. That said,
the aforementioned result has several drawbacks: it can be applied to
a limited number of cases, for the sufficient conditions are quite
restrictive and can be hardly checked. In this paper, we aim to
  advance the analysis, by finding a large (not negligible)
  portion of a neighborhood of the origin where a given double
  eigenvalue bifurcates into two simple branches, without the
  additional restrictive assumptions required in
  \cite{AbatangeloNys2018}.

\bigskip

Before stating our main results, we recall some basic known facts
about stability of eigenvalues under small displacements of the pole.
\begin{theorem}{\rm(\cite[Theorem 1.1, Theorem 1.3]{BonnaillieNorisNysTerracini2014},
\cite[Theorem 1.2, Theorem 1.3]{Lena2015})} \label{thm:reg-a}
Let $\Omega \subset \R^2$ be open, bounded 
and connected. Fix any $j \in \N \setminus\{0\}$. The map 
$a \in \Omega \mapsto \lambda_j^{a}$ has a continuous 
extension on $\overline{\Omega}$, that is 
\[
\lambda_j^{a} \to \lambda_j^{b} 
\quad \text{ as } a \to b \in \Omega 
\qquad \text{ and } \qquad
\lambda_j^{a} \to \lambda_j 
\quad \text{ as } \mathop{\rm dist}(a,\partial\Omega)\to 0,
\]
where $\lambda_j$ is the $j$-th eigenvalue of the Laplacian in $\Omega$ with 
Dirichlet boundary conditions. 

Moreover, if $b \in \Omega$ and the eigenvalue $\lambda_j^{b}$ is
simple, the map $a \in \Omega \mapsto \lambda_j^{a}$ is analytic in a
neighborhood of $b$.
\end{theorem}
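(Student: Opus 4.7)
The plan is to address the three assertions separately: (i) continuity at an interior point $b\in\Omega$, (ii) the limit when the pole reaches the boundary, and (iii) analyticity at a simple eigenvalue. Throughout, we exploit the variational characterization
\[
\lambda_j^a=\inf_{\substack{V\subset H^{1,a}_0(\Omega,\C)\\ \dim V=j}}\sup_{u\in V\setminus\{0\}}\frac{\int_\Omega|(i\nabla+A_a)u|^2\,dx}{\int_\Omega|u|^2\,dx},
\]
which is available thanks to the Hardy-type inequality recalled in the excerpt.

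For continuity at $b\in\Omega$, the plan is to prove the two inequalities $\limsup_{a\to b}\lambda_j^a\le\lambda_j^b$ and $\liminf_{a\to b}\lambda_j^a\ge\lambda_j^b$ separately. For the upper bound I would take an $L^2$-orthonormal family $\{\phi_1^b,\dots,\phi_j^b\}$ of eigenfunctions of $(E_b)$ and build test functions in $H^{1,a}_0(\Omega,\C)$ by cutting off in balls $B_\eps(a)\cup B_\eps(b)$ and transporting the magnetic singularity from $b$ to $a$ via a local phase factor $e^{i\psi_{a,b}}$ on the annular region connecting the two balls; because $\phi_k^b/|x-b|\in L^2$, the cutoff error in the Rayleigh quotient tends to $0$ as $a\to b$, and the resulting $j$-dimensional subspace yields $\lambda_j^a\le\lambda_j^b+o(1)$. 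For the reverse inequality, normalized minimizers/eigenfunctions $u^a$ for $(E_a)$ with $\lambda_j^a\le C$ are uniformly bounded in $H^1_{\mathrm{loc}}(\Omega\setminus\{b\},\C)$; a diagonal subsequence extraction together with a localized Hardy estimate gives a weak limit $u^*\in H^{1,b}_0(\Omega,\C)$, and passing to the limit in the weak formulation \eqref{eq:weaksense} identifies $u^*$ as an eigenfunction of $(E_b)$ with eigenvalue $\lim\lambda_j^a$.

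For the boundary limit, the key observation is that once $a\notin\overline{\Omega}$ (or once the segment joining $a$ to $\partial\Omega$ has very short length inside $\Omega$), the curl of $A_a$ vanishes in $\Omega$, so $A_a$ admits a local single-valued primitive $\varphi_a$ on simply connected subdomains; the gauge transformation $u\mapsto e^{-i\varphi_a}u$ conjugates $(i\nabla+A_a)^2$ to $-\Delta$ up to a boundary mismatch of circulation $\tfrac12$ concentrated on a vanishing arc. A careful cut-and-paste between the magnetic and non-magnetic problems, together with the continuity of Dirichlet Laplacian eigenvalues under removal of a small set (capacity argument), gives $\lambda_j^a\to\lambda_j$. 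For the analyticity statement, once $\lambda_j^b$ is simple, I would apply Kato's analytic perturbation theory after pulling the problem back to a fixed functional space: choose a real-analytic family of diffeomorphisms $F_a\colon\Omega\to\Omega$ with $F_a(b)=a$ and $F_b=\mathrm{Id}$, supported in a small ball. The conjugated operator $H(a):=(F_a^{-1})^*(i\nabla+A_a)^2 F_a^*$ acts on the fixed form domain $H^{1,b}_0(\Omega,\C)$ and, since $A_a\circ F_a$ retains its singularity at $b$, its quadratic form depends real-analytically on the two real parameters $a_1,a_2$; this realizes $\{H(a)\}$ as a self-adjoint holomorphic family of type (B), and Kato--Rellich then produces an analytic branch through the simple eigenvalue $\lambda_j^b$.

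The main obstacle is the mismatch between the moving form domain $H^{1,a}_0(\Omega,\C)$ and a fixed reference space, aggravated by the fact that half-integer circulation prevents any global single-valued gauge transformation moving the pole from $a$ to $b$. Both the construction of test functions in the continuity part and the construction of the diffeomorphism-based conjugation in the analyticity part must be compatible with the double-valued nature of admissible functions near the pole; verifying that the transported functions still belong to the correct weighted space and that the annular phase factor does not introduce spurious circulation is the most delicate bookkeeping step.
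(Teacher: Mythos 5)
This theorem is stated in the paper as a recalled result, with attribution to \cite{BonnaillieNorisNysTerracini2014} and \cite{Lena2015}; the paper itself supplies no proof, so the relevant comparison is against those references. Your strategy matches theirs in its essentials: for interior continuity, the upper bound via transplanted test functions (cut-off near the two poles plus a local phase compensating $A_a-A_b$ on a simply connected set avoiding the segment $[a,b]$) and the lower bound via weak compactness in the moving spaces $H^{1,a}_0$ together with a uniform Hardy inequality is the mechanism used in \cite{BonnaillieNorisNysTerracini2014}; for analyticity at a simple eigenvalue, the pullback by a compactly supported analytic family of diffeomorphisms $F_a$ with $F_a(b)=a$, realizing a self-adjoint holomorphic family of type (B) on the fixed form domain and then invoking Kato--Rellich, is precisely the route of \cite{Lena2015}. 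The boundary limit is also treated in those works via the crack picture (gauge away $A_a$ on $\Omega\setminus S_a$ and let the cut shrink), which is what your parenthetical version describes.

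One small imprecision: you write ``once $a\notin\overline\Omega$ \dots\ the curl of $A_a$ vanishes in $\Omega$.'' The theorem concerns $a\in\Omega$ with $\mathop{\rm dist}(a,\partial\Omega)\to 0$, so $a$ remains an interior pole and ${\rm curl}\,A_a=\pi\,\delta_a$ is nonzero in $\Omega$. The correct statement, which you give in the parenthesis, is that $A_a$ is closed in $\Omega\setminus\{a\}$ and has zero circulation in $\Omega\setminus S_a$, where $S_a$ is a short segment joining $a$ to $\partial\Omega$, and as $|S_a|\to0$ the crack has vanishing capacity, so the eigenvalues of the cracked Laplacian with the jump condition converge to those of the plain Dirichlet Laplacian. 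With that correction your sketch is sound. Also note that in the lower-bound step one must carry $j$ orthonormal eigenfunctions through the limit, using the compact embedding into $L^2$ to retain orthonormality and hence an honest $j$-dimensional limiting eigenspace; your single-function phrasing glosses over this, but the fix is routine.
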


\begin{remark}\label{rem:analiticita-multipli}
  The Kato-Rellich perturbation theory gives some information 
  on higher regularity of the eigenvalue map
  even
  when the limit eigenvalue is not simple.  Let $m$ be the
  multiplicity of the eigenvalue $\lambda^0_j$ and
  $a=t(\cos\alpha,\sin\alpha)\to0$ as $t\to0$ along a fixed direction
  $(\cos\alpha,\sin\alpha)$. In this case, there exist $m$ (not
  necessarily distinct) functions
  $t\mapsto \tilde\lambda_1(t), \ldots, t\mapsto \tilde\lambda_m(t)$
  such that $\tilde\lambda_j(0)=\lambda^0_j$ and $\tilde\lambda_j$ is analytic in a neighborhood of $0$,
   for every $j=1,\dots,m$. Moreover, 
  for any small $\varepsilon>0$, the
  eigenvalues of $(i\nabla + A_{t(\cos\alpha,\sin\alpha)})^2$ lying in
  the neighbourhood $(\lambda^0_j-\varepsilon, \lambda^0_j+\varepsilon)$ are precisely
  $\{\tilde\lambda_j(t): j=1,\dots,m\}$, provided $t$ is sufficiently
  small (see \cite[Section IV]{Lena2015} and \cite[VII, \S 3.5, Theorem 3.9]{kato}).
\end{remark}

In order to study the multiplicity of eigenvalues for poles located in
a  neighborhood  of the  origin,  at  which  a certain  eigenvalue  is
supposed to be double, we analyze the eigenbranches as the pole
moves along a straight line.  In particular, we wonder under what
  circumstances the  eigenbranches could be separated  from each
  other.  To this aim, we study the asymptotic
  expansion of the eigenvalue variation as the pole moves.

  We recall that, for simple eigenvalues, the eigenbranch's expansion
  strongly relies on the asymptotic behavior of the corresponding
  $L^2$-normalized eigenfunction, as the papers
  \cite{AbatangeloFelli2015, AbatangeloFelli2016} show. This also
    has a significant impact on the study of multiple eigenvalues
    developed in \cite{Abatangelo2019,AbatangeloNys2018}.

    In what follows, we review what is known about asymptotic behavior
    of eigenfunctions near the pole. By \cite[Theorem 1.3]{FFT} (see
    also \cite[Proposition 2.1]{AbatangeloFelli2015}), if
    $\varphi\in H^{1,0}_{0}(\Omega,\C)\setminus\{0\}$ is a an
    eigenfunction of problem $(E_0)$, then
\begin{equation*}
  \varphi \text{ has at $0$ a zero
    of order $\frac k2$ for some odd $k\in \N$},
\end{equation*}
and there exist $\beta_1,\beta_2\in\C$ such that
$(\beta_1,\beta_2)\neq(0,0)$ and
\begin{equation}\label{eq:131-old}
  r^{-k/2} \varphi(r\cos t,r\sin t) \to 
  e^{i\frac t2}\left(\beta_1
    \cos\big(\tfrac k2
    t\big)+\beta_2 
    \sin\big(\tfrac k2
    t\big)\right) \quad \text{as $r\to0^+$ in }C^{1,\tau}([0,2\pi],\C),
\end{equation}
for all $\tau\in(0,1)$.  Furthermore, in view of \cite[Lemma
3.3]{HHOO99}, in every eigenspace of $(E_0)$, we can choose a basis
consisting of eigenfunctions $\varphi$ enjoying the property
\begin{equation}\label{eq:propertyP}\tag{$K$}
 e^{-i\frac t2}\varphi(r\cos t, r\sin t)\text{ is a real-valued function};
\end{equation}
see also \cite{HHOHOO2000} and \cite[\S 10.8.2]{BNH2017}, where
functions with the property \eqref{eq:propertyP} are referred to as
$K$-real.  If an eigenfunction
$\varphi\in H^{1,0}_{0}(\Omega,\C)\setminus\{0\}$ of $(E_0)$ satisfies
\eqref{eq:propertyP}, then the asymptotics \eqref{eq:131-old} can be
rewritten as follows: there exist $\beta\in \R\setminus\{0\}$ and
$\omega\in \big[0,\frac{2\pi}{k}\big)$ such that
\begin{equation}\label{eq:131}
  r^{-k/2} \varphi(r\cos t,r\sin t) \to 
  \beta e^{i\frac t2}
  \sin\big(\tfrac k2(t-\omega)\big) \quad \text{in }C^{1,\tau}([0,2\pi],\C)
\end{equation}
as $r\to0^+$ for all $\tau\in (0,1)$, see Proposition \ref{p:asyeige}.
Therefore, $\varphi$ has exactly $k$ nodal lines meeting
 at $0$ and dividing the whole angle into $k$ equal parts.

\bigskip

We are now in position to outline the results of the present paper.
The main one establishes a ramification from a double eigenvalue, if
the pole is moving on some special straight lines.  More precisely,
there exists at least a cone of positive measure such that, as the
pole moves along a straight line within the cone (and its
  rotated with periodicity $\frac\pi k$), a bifurcation occurs and
the double eigenvalue splits into two simple eigenvalues in a
neighborhood of the origin, see Figure \ref{f:rami}.
\begin{theorem}\label{t:genericity}
  Let $N\in\N\setminus\{0\}$ be such that
  $\lambda_{N-1}^0< \lambda_{N}^0=\lambda_{N+1}^0<\lambda_{N+2}^0$, so
  that $\lambda_{N}^0$ is a double eigenvalue of $(E_0)$. Then there
  exist an odd natural number $k$ and an interval $I\subset \R$ with
  positive length $|I|>0$ such that, for every
  $\alpha \in I+\frac{\pi}k \Z$, 
    \begin{equation*}
    \lambda_{N}^a<\lambda_{N+1}^a
    \quad\text{for all $a=|a|(\cos\alpha,\sin\alpha)$ with $|a|$ sufficiently small}.
    \end{equation*}
    More precisely, one of the following three
  alternative situations 
  occurs:
\begin{itemize}
\item[(i)]
      $\lambda_{N}^a=\lambda_N^0+\mu_1(\alpha)|a|^k+o(|a|^k)$,
      $\lambda_{N+1}^a=\lambda_N^0+\mu_2(\alpha)|a|^k+o(|a|^k)$ 
       as $a=|a|(\cos\alpha,\sin\alpha)\to0$, for every
  $\alpha \in I+\frac{\pi}k \Z$ and
      some $\mu_1(\alpha)<\mu_2(\alpha)$;
  \item[(ii)]
      $\lambda_{N}^a=\lambda_N^0+\mathcal C(\alpha)|a|^k+o(|a|^k)$ and
      $\lambda_{N+1}^a=\lambda_N^0+o(|a|^k)$ 
       as $a=|a|(\cos\alpha,\sin\alpha)\to0$, for every
  $\alpha \in I+\frac{\pi}k \Z$ and
      some $\mathcal C(\alpha)<0$ ;
  \item[(iii)]
      $\lambda_{N}^a=\lambda_N^0+o(|a|^k)$ and
      $\lambda_{N+1}^a=\lambda_N^0+\mathcal C(\alpha)|a|^k+o(|a|^k)$ 
       as $a=|a|(\cos\alpha,\sin\alpha)\to0$, for every
  $\alpha \in I+\frac{\pi}k \Z$ and
      some $\mathcal C(\alpha)>0$.
    \end{itemize} 
  \end{theorem}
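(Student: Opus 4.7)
The plan is to combine the Kato--Rellich perturbation theory recalled in Remark~\ref{rem:analiticita-multipli} with a careful blow-up/matching asymptotic analysis, in the spirit of \cite{AbatangeloFelli2015,AbatangeloNys2018}. First, I would fix, using \cite[Lemma~3.3]{HHOO99}, an $L^2$-orthonormal basis $\{\varphi_1,\varphi_2\}$ of the eigenspace of $\lambda_N^0$ consisting of $K$-real eigenfunctions. By \eqref{eq:131}, both $\varphi_j$ vanish at $0$ with the same half-integer order $k/2$ (determined by the eigenvalue), with profiles
\[
r^{-k/2}\varphi_j(r\cos t,r\sin t)\to \beta_j\, e^{it/2}\sin\bigl(\tfrac{k}{2}(t-\omega_j)\bigr),\qquad j=1,2,
\]
for some $\beta_j\in\R\setminus\{0\}$ and $\omega_j\in[0,2\pi/k)$. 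This is the odd integer $k$ appearing in the statement, and the $(\beta_j,\omega_j)$'s encode all the asymptotic information needed in what follows.

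Second, I would fix a direction $\alpha$ and apply Remark~\ref{rem:analiticita-multipli} along the ray $a=t(\cos\alpha,\sin\alpha)$ to obtain two real-analytic branches $\tilde\lambda_1(t,\alpha)\le \tilde\lambda_2(t,\alpha)$ bifurcating from $\lambda_N^0$. To identify their leading order, I would combine min--max (Courant--Fischer) with two-sided Rayleigh-quotient estimates on test functions built from $\varphi_1,\varphi_2$ via a gauge transformation that interchanges the functional spaces $H^{1,0}$ and $H^{1,a}$ (necessary because the natural framework depends on the pole). After rescaling at scale $t$ around the new pole $a$, the eigenfunctions converge to an explicit limiting profile whose boundary data at infinity are precisely the angular patterns above. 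This produces a symmetric real $2\times 2$ matrix $M(\alpha)$, whose entries are trigonometric in $\alpha$ with coefficients depending on $(\beta_j,\omega_j)$, and whose eigenvalues $\mu_1(\alpha)\le\mu_2(\alpha)$ satisfy
\[
\tilde\lambda_j(t,\alpha)=\lambda_N^0+\mu_j(\alpha)\,t^{k}+o(t^{k}),\qquad t\to0^+,\ j=1,2.
\]

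Third, I would establish the $\frac{\pi}{k}\Z$-periodicity. The nodal set of each $\varphi_j$ consists of $k$ half-lines equally spaced by $2\pi/k$, and rotating the pole direction by $\pi/k$ corresponds, through the $K$-real structure and the $\sin(\tfrac{k}{2}(t-\omega_j))$ profile, to a sign change combined with a permutation on the eigenspace. Consequently $M\bigl(\alpha+\frac{\pi}{k}\bigr)$ is conjugate to $M(\alpha)$, hence isospectral, which yields the $\frac{\pi}{k}\Z$-periodicity of the pair $(\mu_1(\alpha),\mu_2(\alpha))$.

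Fourth, I would show that $\mu_1(\alpha)<\mu_2(\alpha)$ on a non-empty open interval $I$. The discriminant $(M_{11}-M_{22})^2+4M_{12}^2$ is real-analytic in $\alpha$, so either it vanishes only on a finite set modulo $\pi/k$, or $M(\alpha)$ is a scalar multiple of the identity for every $\alpha$. The latter possibility can be ruled out by a direct computation: an angular integration using \eqref{eq:131} shows that $M(\alpha)$ depends non-trivially on $\alpha$, for otherwise the asymptotic data $(\beta_j,\omega_j)$ would satisfy incompatible constraints. On the resulting interval $I$, the three cases correspond to $\mu_1\mu_2\neq 0$ (case (i)), $\mu_2=0$ with $\mu_1<0$ (case (ii)), or $\mu_1=0$ with $\mu_2>0$ (case (iii)); the degeneracy $\mu_1=\mu_2=0$ is a codimension-two condition and can be avoided by shrinking $I$ if necessary. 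The main obstacle is precisely the fourth step, and more specifically the explicit identification of $M(\alpha)$ together with the proof that its discriminant is not identically zero: this forces one to perform the blow-up matching of \cite{AbatangeloFelli2015} for each element of the basis $\{\varphi_1,\varphi_2\}$ and to extract, via careful cancellations coming from the half-integer circulation and the $K$-real structure, the correct trigonometric dependence on $\alpha$.
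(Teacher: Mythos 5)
Your proposal contains a foundational error in its very first step that propagates through the rest of the argument: you assert that both basis eigenfunctions $\varphi_1,\varphi_2$ vanish at $0$ ``with the same half-integer order $k/2$ (determined by the eigenvalue).'' This is false. The vanishing order $k(\varphi)/2$ depends on the individual eigenfunction, not merely on the eigenvalue, and within a two-dimensional eigenspace it is perfectly possible for the two basis functions to vanish at different orders $k_1<k_2$. The paper deals with this explicitly through Proposition~\ref{prop:DecompES} (a filtration of $E(\lambda_N^0)$ by vanishing order) and Corollary~\ref{cor:m=2}, which splits the analysis into two structurally different cases. Your proposal can only possibly produce alternative (i) of the statement; in the unequal-order case there is no $2\times 2$ matrix with a common leading scale $|a|^k$ --- the dominant contribution lives in a single $1\times 1$ block at order $|a|^{k_1}$ and the other entry is higher order $o(|a|^{k_1})$, which is precisely the origin of alternatives (ii) and (iii). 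Your plan does not reproduce them.

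Your third step, on $\tfrac{\pi}{k}$-periodicity, is also incorrect as stated. The function $\alpha\mapsto\mathcal C(\alpha,u)$ (and hence the limiting matrix you call $M(\alpha)$) is $\tfrac{2\pi}{k}$-periodic, not $\tfrac{\pi}{k}$-periodic: in the paper's notation, shifting $\alpha$ by $\tfrac{\pi}{k}$ shifts the argument of $G_k$ by $\tfrac{\pi}{2}$, and since $G_k$ only has period $\pi$ with $G_k(\zeta+\tfrac\pi2)=G_k(\tfrac\pi2-\zeta)\neq G_k(\zeta)$ in general, $M(\alpha+\tfrac{\pi}{k})$ is \emph{not} conjugate to $M(\alpha)$. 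The improvement from $\tfrac{2\pi}{k}\Z$ to $\tfrac{\pi}{k}\Z$ in the conclusion instead comes from a completely different mechanism: Remark~\ref{rem:analiticita-multipli} gives analytic branches $t\mapsto\tilde\lambda_j(t)$ along the full line (both $t>0$ and $t<0$), and since $k$ is odd, a distinct leading coefficient at direction $\alpha$ forces a distinct (sign-flipped) leading coefficient at $\alpha+\pi$. Combining this $\pi$-shift with the intrinsic $\tfrac{2\pi}{k}$-periodicity and $\gcd$-type reasoning yields $\tfrac{\pi}{k}\Z$, because $k$ is odd.

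Finally, you candidly flag your fourth step as ``the main obstacle,'' and it is indeed left entirely unresolved: ruling out that $M(\alpha)$ is proportional to the identity for all $\alpha$, and identifying where the branches separate, is where all the work is. The paper does this via a dedicated analysis of the auxiliary function $G_k$ (Lemma~\ref{l:prop-G}): $G_k(0)<0$, $G_k(\tfrac\pi2)>0$, and $G_k$ attains a strict unique maximum over its period at $\tfrac\pi2$ (Corollary~\ref{cor:prop-G}). Combined with the fact --- supplied by Proposition~\ref{prop:DecompES}, which you also omit --- that two linearly independent eigenfunctions with the same vanishing order must have $\omega(\varphi)\neq\omega(\psi)$, this gives an explicit angle $\alpha_0=\omega(\varphi)+\tfrac{\pi}{k}$ at which $\mathcal C(\alpha_0,\varphi)\neq\mathcal C(\alpha_0,\psi)$, i.e.\ the diagonal entries of the symmetric matrix differ, whence the eigenvalues split (Theorem~\ref{t:trasv}). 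Appealing to analyticity of the discriminant to conclude ``it vanishes on a finite set unless identically zero'' is not enough without an argument that rules out the identically-zero case, and this is precisely what the $G_k$ machinery provides.
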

The number $k$ appearing in the
statement of Theorem \ref{t:genericity} is in fact twice the lowest
vanishing order of eigenfunctions relative to the double eigenvalue
$\lambda_0^N$. We observe that $k$ does not depend on the choice of
the eigenbasis.
This will be clear
throughout the proof.

We note
  that in the previous theorem the maximal  size of $|a|$ to have
  bifurcation depends on
  $\alpha$ fixed. That being the case, we can not deduce that the
  ramification occurs \emph{uniformly} in the cone, as the infimum of the
  admissible values of $|a|$ may be zero. Whether bifurcation is
  uniform with respect to
  $\alpha$ is still an open problem. 

\begin{figure}[ht]
\centering
\begin{tikzpicture}[scale=0.5]
\draw [fill=black] (0,0) circle (3pt);
\draw [fill=black] (3,0) circle (3pt);
\filldraw[color=gray!40, fill=gray!20]  plot [smooth cycle]
coordinates {(9,0) (8,2) (4,2.5) (0,1) (-2,3) (-5,3) (-6,2) (-5,0) (-6,-3) (0,-2) (2,-2)};
  \draw [->,line width=1pt](3,0)--(2.5,0);
\fill[line width=1pt,color=gray,fill=gray!40] (0.,0.) --
(-4.5,-0.525) to [out=180, in=180] (-3.45,0.555) -- cycle;
\fill[line width=1pt,color=gray,fill=gray!40] (0.,0.) --
(6,-0.7) to [out=0, in=0] (6.57,0.74) -- cycle;
\fill[line width=1pt,color=gray,fill=gray!40] (0.,0.) --
(-4.5,2.3) to [out=150, in=150] (-5,1.5) -- cycle;
\fill[line width=1pt,color=gray,fill=gray!40] (0.,0.) --
(3.15,-1.61) to [out=0, in=0] (3.5,-1.05) -- cycle;
\fill[line width=1pt,color=gray,fill=gray!40] (0.,0.) --
(-4.5,-2.3) to [out=190, in=190] (-5,-1.5) -- cycle;
\fill[line width=1pt,color=gray,fill=gray!40] (0.,0.) --
(4.5,2.3) to [out=20, in=20] (5,1.5) -- cycle;
\draw[domain=0:2.5, smooth, variable=\x, blue,line width=1pt] plot ({\x}, {0.3*\x*\x});
\draw[domain=-2.5:0, smooth, variable=\x, red,line width=1pt] plot ({\x}, {-0.3*\x*\x});
\draw[domain=0:2.5, smooth, variable=\x, red, line width=1pt] plot ({\x}, {0.6*\x*\x});
\draw[domain=-2.5:0, smooth, variable=\x, blue, line width=1pt] plot ({\x}, {-0.6*\x*\x});
\draw[color=black] (5.7,0) node {\scriptsize $a=|a|(\cos\alpha,\sin\alpha)$};
\draw[color=black] (3.7,1.8) node {\color{blue}\scriptsize $\lambda_{N}^a-\lambda_{N}^0$};
\draw[color=black] (4.1,3.9) node {\color{red} \scriptsize $\lambda_{N+1}^a-\lambda_{N}^0$};
\draw[color=black] (0,-0.4) node {\scriptsize $0$};
\end{tikzpicture}
\caption{Ramification of double eigenvalues.}
\label{f:rami}
\end{figure}
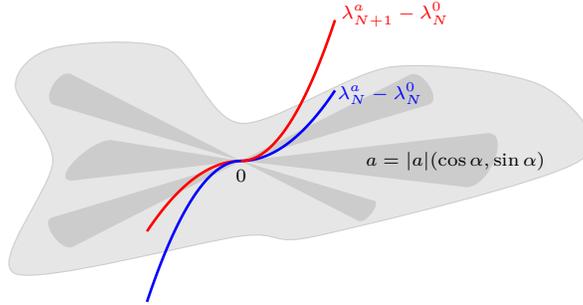

On the other hand, we conjecture that, in fact, the bifurcation occurs
for the pole moving along almost every direction. The validity of this
stronger result will be the object of a future investigation, as it
appears to require some local concavity/convexity properties for the
function $\mathcal C$ defined in \eqref{d:functionC}, which do not
seem easy to prove.

Anyway, Theorem \ref{t:genericity} can be improved in case of
symmetric domains, for example when $\Omega$ is axially or
rotationally symmetric.  In particular, as a first consequence of
Theorem \ref{t:genericity}, we have the following corollary in the
case of the domain $\Omega$ being invariant under a rotation of a
fixed angle $\tfrac{2\pi}{\ell}$, with $\ell\in \N\setminus\{0\}$:
under such a symmetry condition, bifurcation occurs along a larger set
of directions.
\begin{corollary}\label{c:rotdom}
  Let $N\in\N\setminus\{0\}$ be such that
  $\lambda_{N-1}^0< \lambda_{N}^0=\lambda_{N+1}^0<\lambda_{N+2}^0$, so
  that $\lambda_{N}^0$ is a double eigenvalue of $(E_0)$.  Let
  $\ell\in \N\setminus\{0\}$ be such that $\Omega$ is invariant under
  a rotation of an angle $\tfrac{2\pi}{\ell}$. Then there exist an odd
  natural number $k$ and an interval $I\subset \R$ with positive
  length $|I|>0$ such that, for every
  $\alpha \in I+(\frac{\pi}k +\frac{2\pi}\ell) \Z$,
    \begin{equation*}
    \lambda_{N}^a<\lambda_{N+1}^a
    \quad\text{for all $a=|a|(\cos\alpha,\sin\alpha)$ with $|a|$ sufficiently small}.
    \end{equation*}
\end{corollary}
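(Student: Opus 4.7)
The plan is to invoke Theorem \ref{t:genericity} directly and then exploit the rotational symmetry of $\Omega$ to enlarge the set of admissible directions by the extra translation group generated by $\frac{2\pi}{\ell}$. Applying Theorem \ref{t:genericity} yields the odd integer $k$ and the interval $I\subset\R$ for which bifurcation occurs along every direction $\alpha\in I+\frac{\pi}{k}\Z$; it then remains to show that the set of bifurcation directions is also invariant under the translation $\alpha\mapsto\alpha+\frac{2\pi}{\ell}$.

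Let $R$ denote the rotation of $\R^2$ by angle $\frac{2\pi}{\ell}$, so that $R(\Omega)=\Omega$ by assumption, and let $J$ denote the rotation of $\R^2$ by $\pi/2$, so that $A_a(x)=\frac{1}{2|x-a|^2}J(x-a)$. Since any two planar rotations commute, $JR=RJ$, which gives the covariance
\begin{equation*}
A_{Ra}(Rx)=R\,A_a(x),\qquad x\in\R^2\setminus\{a\}.
\end{equation*}
Define the unitary operator $U:L^2(\Omega,\C)\to L^2(\Omega,\C)$ by $(Uu)(x):=u(R^{-1}x)$. The chain rule combined with the covariance above gives
\begin{equation*}
(i\nabla+A_{Ra})(Uu)(x)=R\bigl[(i\nabla+A_a)u\bigr](R^{-1}x);
\end{equation*}
since $R$ is orthogonal and $R(\Omega)=\Omega$, the operator $U$ restricts to an isometric isomorphism from $H^{1,a}_0(\Omega,\C)$ onto $H^{1,Ra}_0(\Omega,\C)$, and conjugates the quadratic form in \eqref{eq:weaksense} with pole $a$ to the corresponding one with pole $Ra$. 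Consequently $\lambda_j^a=\lambda_j^{Ra}$ for every $j\in\N\setminus\{0\}$, counted with multiplicity.

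Since $R\bigl(|a|(\cos\alpha,\sin\alpha)\bigr)=|a|\bigl(\cos(\alpha+\tfrac{2\pi}{\ell}),\sin(\alpha+\tfrac{2\pi}{\ell})\bigr)$, the strict inequality $\lambda_N^a<\lambda_{N+1}^a$ is preserved under $\alpha\mapsto\alpha+\frac{2\pi}{\ell}$; combined with the $\frac{\pi}{k}$-periodicity granted by Theorem \ref{t:genericity}, this yields the stated conclusion. The only nontrivial ingredient is the covariance of $A_a$, which reduces to the commutation $JR=RJ$; the subsequent unitary intertwining is a standard change-of-variables computation, so no serious obstacle is anticipated in carrying out the details.
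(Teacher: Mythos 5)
Your proposal is correct and takes essentially the same route as the paper: the paper's proof simply asserts that ``direct computations'' give $\lambda_j^a=\lambda_j^{\mathcal R_\ell a}$ under $\mathcal R_\ell(\Omega)=\Omega$ and then invokes Theorem \ref{t:genericity}, whereas you spell out those computations (the covariance $A_{Ra}(Rx)=RA_a(x)$ and the unitary intertwining). The combination of the two translation invariances at the end matches the paper's intended reading of $I+(\tfrac{\pi}{k}+\tfrac{2\pi}{\ell})\Z$ as $I+\tfrac{\pi}{k}\Z+\tfrac{2\pi}{\ell}\Z$.
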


The following result concerns axially symmetric domains and guarantees
that bifurcation into two simple eigenvalues occurs if the pole moves
along the symmetry axis.

\begin{corollary}\label{c:symmdom}
  Let $N\in\N\setminus\{0\}$ be such that
  $\lambda_{N-1}^0< \lambda_{N}^0=\lambda_{N+1}^0<\lambda_{N+2}^0$, so
  that $\lambda_{N}^0$ is a double eigenvalue of $(E_0)$.  Let
  $\Omega$ be symmetric with respect to the $x_1$-axis,
  i.e. $(x_1,x_2)\in\Omega$ if and only if $(x_1,-x_2)\in\Omega$.
  Then the interval $I$ provided by Theorem \ref{t:genericity}
  contains $0$.
  \end{corollary}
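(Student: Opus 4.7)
My plan is to exploit the reflection symmetry of $\Omega$ across the $x_1$-axis to pin down the angles $\omega_1,\omega_2$ appearing in the asymptotic expansions \eqref{eq:131} of a $K$-real basis of the eigenspace of the double eigenvalue $\lambda_N^0$, and then to verify that these special angles place $\alpha=0$ inside the bifurcation interval $I$ provided by Theorem \ref{t:genericity}.

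The first step is to introduce the reflection $\sigma(x_1,x_2):=(x_1,-x_2)$, which preserves $\Omega$ and fixes the origin. The Aharonov--Bohm potential $A_0$ flips sign in its first component under $\sigma$ and keeps the second, so a direct computation shows that $(i\nabla+A_0)^2$ commutes with the antiunitary involution $\mathcal{J}$ acting on $H^{1,0}_0(\Omega,\C)$ by $(\mathcal{J}u)(x_1,x_2):=\overline{u(x_1,-x_2)}$ (care is required to make $\mathcal{J}$ well-defined on the functional space, due to the half-integer monodromy at $0$; this is where the property \eqref{eq:propertyP} becomes natural, since $K$-real functions lift to genuine real functions on the double cover, where $\sigma$ lifts without ambiguity). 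Since $\mathcal{J}^2=\mathrm{Id}$, the two-dimensional eigenspace of $\lambda_N^0$ decomposes into one-dimensional $\pm 1$-eigenspaces of $\mathcal{J}$, yielding a canonical basis $\{\varphi_1,\varphi_2\}$.

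Next, I would examine the effect of $\mathcal{J}$ on the leading expansion \eqref{eq:131}. Writing $\varphi_j(r\cos t,r\sin t)\sim \beta_j r^{k/2} e^{it/2}\sin(\tfrac k2(t-\omega_j))$, the identity $\mathcal{J}\varphi_j=\pm\varphi_j$ combined with the change of variable $t\mapsto -t$ forces $\sin(\tfrac k2(t-\omega_j))=\pm\sin(\tfrac k2(t+\omega_j))$ for all $t$, which after using $k$ odd yields $\omega_1\equiv 0$ and $\omega_2\equiv \pi/k \pmod{\pi/k}$. Thus the symmetry rigidly selects the two $K$-real eigenfunctions whose nodal lines are, respectively, exactly aligned with and exactly bisecting the $x_1$-axis at the origin.

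Finally, I would feed these specific angles into the characterization of the bifurcation interval $I$ coming out of the proof of Theorem \ref{t:genericity}. The coefficients $\mu_i(\alpha)$ (respectively $\mathcal{C}(\alpha)$) separating the two branches are built from explicit trigonometric expressions in $k(\alpha-\omega_1)$ and $k(\alpha-\omega_2)$; with $\omega_1=0$ and $\omega_2=\pi/k$ these two arguments differ by $\pi$ at every $\alpha$, so the separation expression takes its extremal (nonzero) value at $\alpha=0$. Consequently $\mu_1(0)\neq\mu_2(0)$ (resp.\ $\mathcal{C}(0)\neq 0$) and, by the openness of this non-degeneracy condition already used to construct $I$, the origin lies in the interior of $I$. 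The main obstacle is a clean implementation of the antiunitary symmetry $\mathcal{J}$ at the functional-analytic level: one must check that $\mathcal{J}$ preserves $H^{1,0}_0(\Omega,\C)$ and commutes with the quadratic form on the right-hand side of \eqref{eq:weaksense}, and that the $\pm 1$-decomposition of the eigenspace is indeed compatible with the $K$-real basis used in \eqref{eq:131}; everything else is a computation on the already-available leading term.
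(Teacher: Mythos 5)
Your approach is essentially the paper's: the antiunitary involution $\mathcal J u=\bar u\circ\sigma$ is precisely the operator $\mathfrak S$ introduced in Section \ref{sec:axially-symm-doma}, and the strategy of using it to pin down $\omega=0$ and $\omega=\pi/k$ and then feed those values into $\mathcal C(\alpha,\cdot)$ via \eqref{eq:cara-C} is exactly what Lemma \ref{l:omega-simm} and Corollary \ref{cor:m=2-simm} implement. Your computation deducing $\omega=0$ from $\mathcal J\varphi=-\varphi$ and $\omega=\pi/k$ from $\mathcal J\varphi=\varphi$ is correct and matches Lemma \ref{l:omega-simm}(i)--(ii).

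There is, however, a gap in the middle step. The $(1,1)$-splitting of $E(\lambda_N^0)$ into $\pm1$-eigenspaces of $\mathcal J$ is only automatic when both basis eigenfunctions share the same vanishing order $k$ (alternative (i) of Corollary \ref{cor:m=2}): there, if both pieces landed in the same sign, Lemma \ref{l:omega-simm} would force $\omega_1=\omega_2$, contradicting Proposition \ref{prop:DecompES}. But Theorem \ref{t:genericity} also covers the case $k_1<k_2$ (alternative (ii)), where $E(\lambda_N^0)=E_1\oplus E_2$ with $\dim E_\ell=1$. Each $E_\ell$ is $\mathcal J$-invariant, hence entirely in one $\mathcal J$-eigenspace, and both $E_1,E_2$ could well carry the same sign, so your symmetric/antisymmetric decomposition can return $0$ on one side. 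The paper's Corollary \ref{cor:m=2-simm}(ii) sidesteps this by only producing a $\mathcal J$-eigenvector $\varphi_1$ of the lower order (so $\omega(\varphi_1)\in\{0,\pi/k_1\}$), which is all that enters $\mathcal C(0,\varphi_1)$; you need that adaptation. A small arithmetic slip as well: with $\omega_1=0$, $\omega_2=\pi/k$, the arguments $\tfrac{k}{2}(\alpha-\omega_j)$ passed to $G_k$ differ by $\pi/2$, not $\pi$, but the conclusion still holds because $G_k(0)<0$ while $G_k(\pm\pi/2)>0$ (Lemma \ref{l:prop-G}), so $\mathcal C(0,\varphi_a)$ and $\mathcal C(0,\varphi_s)$ have opposite signs, making the diagonal of \eqref{eq:matrixRa} distinct at $\alpha=0$ --- which is all that is required; your ``extremal value'' claim is not needed and not established.
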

In particular, the above corollary points out that the union of
cones where ramification occurs intersects the symmetry axis
$x_2=0$, as the angles $\alpha=0,\pi$ are included.

\begin{figure}[ht]
\centering
\begin{tikzpicture}[scale=0.5]
  \filldraw[color=gray!40, fill=gray!20]  plot [smooth cycle] coordinates
  {(4,0) (5,0.5) (7,1) (8,2) (5,2) (3,1) (1,1) (1,2) (-2,2) (-3,1) (-3,0.5) (-4,0)
(-5,-0.5) (-7,-1) (-8,-2) (-5,-2) (-3,-1) (-1,-1) (-1,-2) (2,-2) (3,-1) (3,-0.5)
};
\draw[line width=0.1pt,->](-8,1) -- (8,-1);
\draw[line width=0.1pt,->](-4,-2) -- (4,2);
\draw[color=black] (8,-0.7) node {\tiny $x_1$};
\draw[color=black] (3.6,2.2) node {\tiny $x_2$};
\draw[domain=-4:4, smooth, variable=\x, red,line width=1pt] plot ({\x}, {0.3*\x*\x});
\draw[domain=-4:4, smooth, variable=\x, blue,line width=1pt] plot ({\x}, {-0.3*\x*\x});
\draw[color=black] (5,-3.9) node {\color{blue}\scriptsize $\lambda_{N}^a-\lambda_{N}^0$};
\draw[color=black] (5.2,3.9) node {\color{red} \scriptsize $\lambda_{N+1}^a-\lambda_{N}^0$};
\draw [->,line width=1pt](2.4,-0.3)--(1.4,-0.175);
\draw [fill=black] (2.4,-0.3) circle (3pt);
\draw[color=black] (2.4,-0.7) node {\scriptsize $a$};
\end{tikzpicture}
\caption{Domains with the symmetries of a
rectangle.}
\label{f:rectangle}
\end{figure}
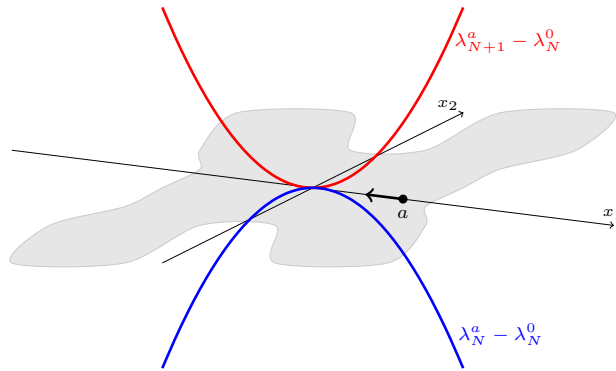

An interesting example is given by domains with the symmetries of a
rectangle.  For such domains we observe that a double eigenvalue of
the problem with a pole located in the center of symmetry splits into
two different eigenbranches, as the pole moves along one of the two
symmetry axes: one is below and the other is above the limit
eigenvalue, see Figure \ref{f:rectangle}.
\begin{corollary}\label{cor:simmetria-rettangolo}
  Let
  $\Omega$
   have the symmetries of a rectangle,
  i.e. $(x_1,x_2)\in\Omega$ if and only if $(x_1,-x_2)\in\Omega$ and $(-x_1,x_2)\in\Omega$.
  Let $N\in\N\setminus\{0\}$ be such that
  $\lambda_{N-1}^0< \lambda_{N}^0=\lambda_{N+1}^0<\lambda_{N+2}^0$, so
  that $\lambda_{N}^0$ is a double eigenvalue of $(E_0)$.  Then
  there exist an odd natural number $k$ and an open interval
    $I\subset \R$ with positive length $|I|>0$ such that $0\in I$ and,
    for every $\alpha \in I+\frac{\pi}2 \Z$,
    \begin{equation*}
      \lambda_{N}^a=\lambda_N^0-\mu(\alpha)|a|^k+o(|a|^k)\quad
      \text{and}\quad \lambda_{N+1}^a=\lambda_N^0+\mu(\alpha)|a|^k+o(|a|^k),
    \end{equation*}
    as $a=|a|(\cos\alpha,\sin\alpha)\to0$, for some positive constant
    $\mu(\alpha)>0$ depending on $\alpha$ such that $\mu(\alpha+\pi)=\mu(\alpha)$.
  \end{corollary}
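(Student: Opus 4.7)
The plan is to combine two applications of Corollary~\ref{c:symmdom}, one for each symmetry axis of $\Omega$, with the rigidity coming from the full rectangle symmetry group. First I would invoke Corollary~\ref{c:symmdom} with the $x_1$-axis symmetry to produce an open interval $I_1\ni 0$ such that, for every $\alpha\in I_1+\tfrac{\pi}{k}\Z$, one of the three alternatives of Theorem~\ref{t:genericity} holds. Applying the same corollary after a $\pi/2$-rotation of coordinates (which preserves the Aharonov--Bohm structure, since rigid rotations simply relabel the pole and the domain) yields an analogous interval $I_2\ni\pi/2$. Setting $I:=I_1\cap (I_2-\tfrac{\pi}{2})$ gives an open interval around $0$ for which both applications apply simultaneously.

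Second I would exploit the point symmetry $x\mapsto -x$ of $\Omega$ (the composition of the two axial reflections) to construct a gauge-twisted unitary equivalence between the operators with poles $a$ and $-a$. This yields $\lambda_j^{-a}=\lambda_j^a$ for every $j$, hence $\mu(\alpha+\pi)=\mu(\alpha)$, and consolidates the two bifurcation sets $I+\pi\Z$ and $(I+\tfrac{\pi}{2})+\pi\Z$ coming from the two axial steps into $I+\tfrac{\pi}{2}\Z$.

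The central technical step is to rule out alternatives (ii) and (iii) of Theorem~\ref{t:genericity} along these directions and to establish the symmetric splitting $\mu_1=-\mu_2$. For this I would analyse the reduced $2\times 2$ perturbation matrix $M(a)$ acting on the eigenspace $V$ of $\lambda_N^0$. Both axial reflections induce commuting antilinear involutions $T_1,T_2$ on $V$; since $\dim V=2$, they decompose $V$ into joint real lines whose spanning $K$-real eigenfunctions can be chosen, as in~\eqref{eq:131}, with angular parameters $\omega_1,\omega_2$ rigidly interlocked (their nodal directions offset by $\pi/k$). When $a$ lies on the $x_1$-axis, $M(a)$ must commute with $T_1$, so it is diagonal in this symmetry-adapted basis; a parallel computation using the $x_2$-axis analogue together with $\lambda_j^{-a}=\lambda_j^a$ then forces $\operatorname{tr} M(a)=o(|a|^k)$, so $\mu_1+\mu_2=0$. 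Combined with the strict inequality from Theorem~\ref{t:genericity}, this produces alternative (i) with $\mu_2=\mu(\alpha)>0$ and $\mu_1=-\mu(\alpha)<0$.

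The main obstacle is precisely this last computation: translating the abstract symmetry constraints on $M(a)$ into the trace-zero condition at leading order in $|a|^k$. It requires an explicit identification of the symmetry-adapted eigenbasis and careful bookkeeping of how the leading perturbation (likely the function $\mathcal C$ of~\eqref{d:functionC}) couples to the two joint-parity components of $V$.
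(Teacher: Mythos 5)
Your sketch correctly identifies the structural ingredients — Corollary~\ref{c:symmdom} applied to the symmetry axis and the $\pi$-rotation identity $\lambda_j^{-a}=\lambda_j^a$ — but the argument stops exactly where you yourself call ``the main obstacle'': there is no actual proof that $\operatorname{tr}M(a)=o(|a|^k)$, and the two symmetry constraints you invoke do not deliver it. Choosing the symmetry-adapted $K$-real basis $\{\varphi_a,\varphi_s\}$ (as the paper does in Corollary~\ref{cor:m=2-simm}) does make the reduced form diagonal when $a$ lies on the $x_1$-axis, with leading diagonal entries $\mathcal C(0,\varphi_a)$ and $\mathcal C(0,\varphi_s)$; but nothing in the reflection algebra forces these two numbers to be opposite — Lemma~\ref{l:prop-G} pins down their signs, not their sum. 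And $\lambda_j^{-a}=\lambda_j^a$ only says the sorted spectra at $a$ and $-a$ agree, a constraint compatible with any trace unless one knows which analytic branch goes to which under $a\mapsto-a$. You also leave alternative~(ii) of Corollary~\ref{cor:m=2} unaddressed: if the two basis eigenfunctions had different vanishing orders $k_1<k_2$, the ``$2\times 2$ matrix at order $|a|^{k_1}$'' is rank-one degenerate, so a trace argument at a single order is not even well posed; this case must be ruled out, and your proposal contains no mechanism for that.

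The paper closes all of these gaps at once using an ingredient your proposal does not invoke: Remark~\ref{rem:analiticita-multipli}, the real-analyticity (Kato--Rellich) of eigenvalue branches along a fixed line through the origin. Since the branch $t\mapsto\Lambda_\alpha(t)$ is analytic across $t=0$ with leading term $C(\alpha)t^k$ and $k$ is \emph{odd}, the branch that ascends like $C(\alpha)t^k$ for $t>0$ automatically descends like $-C(\alpha)|t|^k$ for $t<0$. The $\pi$-rotation identity then turns the $t<0$ asymptotics into the $t>0$ asymptotics of the \emph{other} sorted branch, yielding the coefficients $\pm C(\alpha)$ and, as a by-product, showing that alternative~(ii) cannot occur for rectangle-symmetric domains. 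This ``analyticity plus odd exponent plus rotation'' step is the missing idea: it is what gives the symmetric $\pm\mu(\alpha)$ splitting without any computation on the reduced matrix. (Two minor remarks: your intersection $I:=I_1\cap(I_2-\tfrac\pi2)$ is a reasonable way to make the final extension to $I+\tfrac\pi2\Z$ precise, and is in the spirit of the paper's closing appeal to symmetry; and the operator $\mathfrak S$, while antilinear on complex functions, acts as a real-linear involution on the $K$-real eigenspace $E(\lambda_N^0)$, so ``antilinear involutions $T_1,T_2$'' should read real-linear involutions on that space.)
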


The most favorable situation from the point of view of symmetries
  is obviously represented by the disk
  $D:= \{(x_1,x_2) \in \R^2:=x_1^2+x_2^2=1 \}$, which is invariant
  under rotation by any angle and reflection through any of its
  diameters. It is known from \cite[Lemma 2.4]{Abatangelo2019}, see
  also \cite[Proposition 5.3]{BNHHO2009}, that all eigenvalues
  $\lambda_j^a$ of problem \eqref{eq:eige_equation_a} in $\Omega=D$
  are double if the pole $a$ is located at the origin. On the other
  hand, for the
  first eigenvalue it is proved  in \cite{Abatangelo2019} that
  $\lambda_1^a$ is simple if $a\in D\setminus\{0\}$. The analysis
  carried out in the present paper shows that the
  subsequent eigenvalues $\lambda_j^a$ with $j\geq2$ are also simple
  locally around $0$, but not at the origin.  

\begin{corollary}\label{cor:disk}
  Let $\Omega=D$ be the unit disk and $\lambda$ be any
  eigenvalue of problem $(E_0)$. Then $\lambda$ is double, i.e.
  $\lambda_{N-1}^0<\lambda=\lambda_N^0=\lambda_{N+1}^0<\lambda_{N+2}^0$
  for some $N\in \N\setminus\{0\}$. Moreover,
  there exist a neighborhood $\mathcal U\subset \R^2$ of the
  origin such that $\lambda_{N+i}^a$ is simple for any $i=0,1$ and
  $a\in \mathcal U\setminus \{0\}$. More precisely, there exist
  $k\in\N$ odd and a positive constant $\mathcal M>0$ 
  such that the
  two different eigenbranches departing from $\lambda_N^0$ have the
  following asymptotic expansion, as $a$ is moving along any radius and
  $|a|\to0$:
    \begin{align*}
&\lambda_{N}^a=\lambda_N^0-\mathcal M |a|^{k}+o(|a|^{k}),\\
&\lambda_{N+1}^a=\lambda_N^0 + \mathcal M |a|^{k}+o(|a|^{k}).
    \end{align*}
\end{corollary}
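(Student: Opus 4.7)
My plan is to derive Corollary \ref{cor:disk} by combining the known fact that all eigenvalues of $(E_0)$ on the unit disk $D$ are double (see \cite[Lemma 2.4]{Abatangelo2019}, recalled in the introduction above) with the full rotational invariance of the disk, and then invoking Corollary \ref{cor:simmetria-rettangolo}. The first assertion of the corollary would then be immediate: for every eigenvalue $\lambda$ of $(E_0)$ there exists $N\in\N\setminus\{0\}$ such that $\lambda_{N-1}^0<\lambda_N^0=\lambda_{N+1}^0<\lambda_{N+2}^0$.

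The key observation that upgrades the previous results to the present setting is the rotational invariance of the eigenvalue map. For any rotation $R_\theta$ of angle $\theta$, the change of variable $v(y):=u(R_\theta y)$ combined with the pointwise identity $A_a(R_\theta y)=R_\theta A_{R_\theta^{-1}a}(y)$ shows that the quadratic form of $(i\nabla+A_a)^2$ on $D$ transforms into that of $(i\nabla+A_{R_\theta^{-1}a})^2$ on $D$; since both the Dirichlet boundary condition and the domain are rotationally invariant, I would obtain $\lambda_j^a=\lambda_j^{R_\theta a}$ for every $\theta\in\R$ and every $j$. In particular, $a\mapsto\lambda_j^a$ depends only on $|a|$.

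Next, I would apply Corollary \ref{cor:simmetria-rettangolo} to $D$, which trivially possesses the symmetries of a rectangle with respect to the coordinate axes. This yields an odd $k\in\N$, an open interval $I\ni 0$, and a positive function $\mu(\alpha)$ satisfying $\mu(\alpha+\pi)=\mu(\alpha)$, such that the symmetric two-branch expansion holds for every $\alpha\in I+\tfrac{\pi}{2}\Z$. Specializing to $\alpha=0$ and setting $\mathcal{M}:=\mu(0)>0$, I would obtain the asserted asymptotic expansions along the positive $x_1$-axis; the rotational invariance established in the previous paragraph then guarantees that the same constant $\mathcal{M}$ governs the expansion along every radius.

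For the simplicity of $\lambda_{N+i}^a$ on a full punctured neighborhood of the origin, the strict positivity of $\mathcal{M}$ in the $|a|^k$-expansion forces $\lambda_N^a<\lambda_{N+1}^a$ for every $a$ on the positive $x_1$-axis with $0<|a|<r_0$ sufficiently small, and rotational invariance propagates this strict inequality to every $a$ with $0<|a|<r_0$, so that $\mathcal{U}:=\{a\in\R^2:|a|<r_0\}$ does the job. The argument is more conceptual than technical: once the invariance $\lambda_j^a=\lambda_j^{R_\theta a}$ is noted, the statement reduces to Corollary \ref{cor:simmetria-rettangolo}. The one point that would deserve careful checking in a full writeup is the intertwining relation between the rotation $R_\theta$ and the Aharonov--Bohm potential $A_a$, which is, however, elementary since $A_a(x)$ is a $\pi/2$-rotation of the normalized radial vector centered at $a$.
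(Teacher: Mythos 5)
Your proposal is correct and follows essentially the same route as the paper: invoke Lemma \ref{l:diskeigenfunctions} (equivalently \cite[Lemma 2.4]{Abatangelo2019}) for doubleness, apply Corollary \ref{cor:simmetria-rettangolo} at $\alpha=0$ to get the symmetric two-branch expansion with $\mathcal M=\mu(0)$, and use rotational invariance of the spectrum ($\lambda_j^a=\lambda_j^b$ whenever $|a|=|b|$) to extend to every radius and to obtain a full punctured neighborhood of simplicity. The only added detail you supply — the intertwining identity $A_a(R_\theta y)=R_\theta A_{R_\theta^{-1}a}(y)$ justifying the rotational invariance — is correct and is left implicit in the paper.
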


We mention that a lot of numerical simulations are available in the
literature showing the behavior of Aharonov--Bohm eigenvalues in
symmetric domains, with poles moving along axes of symmetry.  We refer
the reader to \cite{BonnaillieHelffer2011,
  BonnaillieHelffer2013, BNH2017, BonnaillieNorisNysTerracini2014}. It is
worthwhile to notice that our results validate particularly the
simulations on the disk \cite{BonnaillieHelffer2013}, where any
eigenvalue is seen to be simple if the pole is locally away from the
origin.

\bigskip

The paper is organized as follows. In Section \ref{sec:preliminaries}
we set the variational framework of the problem. In particular, we
specify how to work with \emph{real} problems, even if in principle
eigenfunctions are complex-valued; indeed, a gauge transformation
makes \eqref{eq:eige_equation_a} equivalent to an eigenvalue problem
for the Laplacian in a domain with a straight crack along the moving
direction of the pole, in the spirit of
\cite{FelliNorisOgnibeneSiclari2023}.  We also recall from
\cite{FelliNorisOgnibeneSiclari2023} the definition of some key
quantities, obtained as minima of some energy functionals and
appearing in the asymptotic expansions of Aharonov-Bohm eigenvalues.
Section \ref{sec:appLemma} is devoted to the general approach of the
perturbed problem; in particular, Theorem \ref{thm:approxEVs} provides
a preliminary eigenvalue expansion in terms of the eigenvalues of the
bilinear form \eqref{eq:def-r_a}, defined on the finite-dimensional
eigenspace associated with the (possibly multiple) limit
eigenvalue. In Section \ref{sec:functionC} we study the key function
\eqref{d:functionC} of the angle $\alpha$ appearing in the asymptotic
expansion of eigenbranches, as the pole
$a=|a|(\cos\alpha, \sin\alpha)$ moves towards the origin along a fixed
half-line with slope $\alpha$.  Introducing a suitable decomposition
of the limit eigenspace in Section \ref{sec:decomposition}, in Section
\ref{sec:ramification} we observe a bifurcation phenomenon from a
double eigenvalue along certain directions, thus proving our main
result Theorem \ref{t:genericity}. Finally, Section
\ref{sec:symmetric} deals with some relevant applications in symmetric
domains, validating several preexisting numerical simulations.

\section{Preliminaries}\label{sec:preliminaries}

\subsection{An equivalent eigenvalue problem by gauge transformation}
A suitable gauge transformation allows us to obtain an equivalent
formulation of \eqref{eq:eige_equation_a} as an eigenvalue problem for
the Laplacian in a domain with straight cracks. We are now going to
introduce the notation and context as in
\cite{FelliNorisOgnibeneSiclari2023}.

For any  $\alpha\in (-\pi,\pi]$,  we let  $\nu_\alpha:=(-\sin\alpha,\cos\alpha)$
and  consider the half-planes
\[
\pi_\alpha^+:=\{ x\in\R^2:\ x\cdot \nu_\alpha >0 \} 
\quad \text{and}\quad 
\pi_\alpha^-:=\{ x\in\R^2:\ x\cdot \nu_\alpha <0 \}. 
\]
Then $\nu_\alpha$ is the unit outer normal vector to $\pi_\alpha^-$ on $\partial\pi_\alpha^-$. 
We also define 
\begin{equation*}
     \Sigma_\alpha := \partial\pi_\alpha^+=\partial\pi_\alpha^-=\{ t(\cos\alpha,\sin\alpha): t\in \R \}.
\end{equation*}
In view of classical trace results and embedding theorems for
fractional Sobolev spaces in dimension~1, for every
$\alpha\in(-\pi,\pi]$ and $p \in [2, +\infty)$ there exist continuous
trace operators
\[
\gamma^\alpha_+:  H^1(\pi^+_\alpha) \to L^p(\Sigma_\alpha)
\quad \text{and}\quad
\gamma^\alpha_-: H^1(\pi^-_\alpha) \to L^p(\Sigma_\alpha).
\]
For a function $u\in H^1(\Omega\setminus \Sigma_\alpha)$, we will simply write $\gamma^\alpha_+(u)$
and $\gamma^\alpha_-(u)$ to indicate 
$\gamma^\alpha_+(u\big|_{\pi_\alpha^+})$ and $\gamma^\alpha_-(u\big|_{\pi_\alpha^-})$, respectively.

For every $a\in\R^2\setminus\{0\}$ we define
\begin{equation*}
    \Gamma_a :=\{ ta: t\in (-\infty,1] \}\quad\text{and}\quad
    S_a :=\{ t a: t\in [0,1] \},
\end{equation*}
so that, if $a\in\R^2\setminus\{0\}$ belongs to the half-line from $0$
with slope $\alpha$, i.e.  $a=|a|(\cos \alpha,\sin\alpha)$ with
$|a|>0$, $\Gamma_a :=\{ t(\cos\alpha,\sin\alpha): t\leq |a|\}$ and
$S_a :=\{ t(\cos\alpha,\sin\alpha): 0\leq t\leq |a|\}$.

 We also consider the functional space $\mathcal{H}_a$ defined as the
closure of 
\begin{equation*}
    \{ w\in H^1(\Omega\setminus \Gamma_a): \ w=0 \text{ in a neighborhood of }\partial\Omega \}
\end{equation*} 
in $H^1(\Omega\setminus \Gamma_a)$.
The norm 
\[
\|w\|_{\mathcal H_a} = \left( \int_{\Omega\setminus \Gamma_a} |\nabla w|^2\,dx \right)^{1/2}
\]
on $\mathcal{H}_a$ is equivalent to the standard
$H^1(\Omega\setminus\Gamma_a)$-norm by the Poincar\'e inequality.  Let
$\widetilde{\mathcal H}_a$ denote the subspace
\[
  \widetilde{\mathcal H}_a := \{ w\in \mathcal{H}_a : \
  \gamma^\alpha_+(w) + \gamma^\alpha_-(w)=0 \text{ on }\Gamma_a \},
\]
being $\alpha$ the slope of $\Gamma_a$.

As detailed in \cite[Section 3]{FelliNorisOgnibeneSiclari2023}, 
if $a=|a|(\cos\alpha,\sin\alpha)\in \R^2\setminus\{0\}$, the function 
\begin{align*}
&\Theta_a: \R^2\setminus\{a\}\to \R,\\
&\Theta_a(a+(r\cos t,r\sin t))=\frac t2\quad\text{for every }r>0\text{ and }t\in [\alpha-\pi,\alpha+\pi),
\end{align*}
i.e.
\begin{equation*}
    \Theta_a(a+(r\cos t,r\sin t))=
    \begin{cases}
       \frac 12 t ,&\text{if }t\in[0,\pi+\alpha),\\[3pt]
       \frac12 t-\pi,&\text{if }t\in[\pi+\alpha,2\pi),
    \end{cases}
\end{equation*}
enjoys the following properties:
\begin{align*}
  &\Theta_a\in C^\infty(\R^2\setminus\Gamma_a),\\
  &\nabla \Theta_a \text{ can be continued to be in
    }C^\infty(\R^2\setminus\{a\})
    \text{ with }\nabla\Theta_a=A_a\text{ in }\R^2\setminus\{a\}.
\end{align*}
The change of gauge 
\begin{equation}\label{eq:gauge}
  u(x) \mapsto v(x):= e^{-i\Theta_a(x)}u(x) , \quad x\in \Omega\setminus\Gamma_a,    \end{equation}
maps any eigenfunction
$u\in H^{1,a}_{0}(\Omega,\C)\setminus\{0\}$ of \eqref{eq:eige_equation_a},
associated
to an eigenvalue $\lambda$, into a solution $v\in\mathcal H_a\setminus\{0\}$ of the following problem 
\begin{equation}\label{eq:eige_a}\tag{$P_a$}
    \begin{cases}
        -\Delta v = \lambda v, &\text{in }\Omega\setminus\Gamma_a,\\
        v=0, &\text{on }\partial\Omega,\\
        \gamma_+^\alpha(v) + \gamma_-^\alpha(v)=0, &\text{on }\Gamma_a,\\
        \gamma_+^\alpha(\nabla v\cdot \nu_\alpha) + \gamma_-^\alpha(\nabla v\cdot\nu_\alpha)=0,&\text{on }\Gamma_a.
    \end{cases}
\end{equation}
Since problem \eqref{eq:eige_a} has real coefficients, every
eigenspace has a basis consisting of real eigenfunctions; furthermore,
the space of real eigenfunctions of \eqref{eq:eige_a}, associated to
some eigenvalue $\lambda$, and the space of complex eigenfunctions
have the same dimension, so that the multiplicity of eigenvalues does
not change when treating \eqref{eq:eige_a} as a real problem. We can
therefore limit ourselves to considering real eigenfunctions for
\eqref{eq:eige_a}, thus treating $\mathcal H_a$-functions as
real-valued (see also \cite[Remark 3.5]{FelliNorisOgnibeneSiclari2023}
and \cite[Lemma 2.3]{BonnaillieNorisNysTerracini2014}).

Problem \eqref{eq:eige_a} is meant in the following weak sense:
$v\in \mathcal H_a$ weakly solves \eqref{eq:eige_a} if
\begin{equation*}
  v\in\widetilde{\mathcal H}_a\quad\text{and}\quad
  \int_{\Omega\setminus\Gamma_a}\nabla v\cdot
  \nabla w\,dx=\lambda\int_\Omega vw\,dx\quad\text{for all }w\in 
  \widetilde{\mathcal H}_a.
\end{equation*}
In particular, \eqref{eq:eige_equation_a} and \eqref{eq:eige_a} have
the same eigenvalues, and eigenfunctions mutually related by the gauge
transformation \eqref{eq:gauge} (if the eigenfunctions of
\eqref{eq:eige_equation_a} are chosen to satisfy
\eqref{eq:propertyP}).

For every $\alpha\in(-\pi,\pi]$, we consider the limit half-line
\begin{equation*}
\Gamma_0^\alpha=\{ t(\cos\alpha,\sin\alpha): t\in (-\infty,0] \},
\end{equation*}
noting that $\Gamma_0^\alpha=\overline{\Gamma_a\setminus S_a}$ for all
$a\in\R^2\setminus\{0\}$ belonging to the half-line from $0$ with
slope $\alpha$.  We also define $\mathcal{H}_0^\alpha$ as the closure
of
$\{ w\in H^1(\Omega\setminus \Gamma_0^\alpha): \ w=0 \text{ in a
  neighborhood of }\partial\Omega \}$ in
$H^1(\Omega\setminus \Gamma_0^\alpha)$, and
\[
  \widetilde{\mathcal H}_0^\alpha:= \{w\in \mathcal{H}_0^\alpha : \
  \gamma^\alpha_+(w) + \gamma^\alpha_-(w)=0 \text{ on }\Gamma_0^\alpha
  \},
\]
 both endowed with the norm
\[
\|w\|_{\mathcal H_0^\alpha} = \left( \int_{\Omega\setminus \Gamma_0^\alpha} |\nabla w|^2\,dx \right)^{1/2}.
\]
As $a\to0$ along the direction determined by the slope $\alpha$, the
limit eigenvalue problem is
\begin{equation}\label{eq:eige_0}\tag{$P_0^\alpha$}
    \begin{cases}
        -\Delta v = \lambda v, &\text{in }\Omega\setminus\Gamma_0^\alpha,\\
        v=0, &\text{on }\partial\Omega,\\
        \gamma_+^\alpha(v) + \gamma_-^\alpha(v)=0, &\text{on }\Gamma_0^\alpha,\\
        \gamma_+^\alpha(\nabla v\cdot \nu) + \gamma_-^\alpha(\nabla v\cdot\nu)=0,&\text{on }\Gamma_0^\alpha,
    \end{cases}
\end{equation}
meant a weak sense, i.e. $v\in \mathcal H_0^\alpha$ weakly solves \eqref{eq:eige_0} if 
\begin{equation}\label{eq:eige-0-weak}
  v\in\widetilde{\mathcal H}_0^\alpha\quad
  \text{and}\quad  \int_{\Omega\setminus\Gamma_0^\alpha}\nabla v\cdot
  \nabla w\,dx
  =\lambda\int_\Omega vw\,dx\quad\text{for all }w\in 
  \widetilde{\mathcal H}_0^\alpha.
\end{equation}
Solutions $u\in H^{1,0}_{0}(\Omega,\C)$ of $(E_0)$ correspond to those
of \eqref{eq:eige_0} through the gauge transformation
\begin{equation}\label{eq:gauge0}
G_\alpha(u)=e^{-i\Theta_0^\alpha}u,
\end{equation}
where 
\begin{equation*}
\Theta_0^\alpha: \R^2\setminus\{0\}\to \R,\quad
    \Theta_0^\alpha(r\cos t,r\sin t)=
    \begin{cases}
       \frac12 t,&\text{if }t\in[0,\pi+\alpha),\\
       \frac12 t-\pi,&\text{if }t\in[\pi+\alpha,2\pi),
    \end{cases}
\end{equation*}
i.e.
\begin{equation*}
  G_\alpha(u)(r\cos t,r\sin t)=f_\alpha(t) e^{-i\frac t2}u  (r\cos t,r\sin t),
\end{equation*}
where 
    \begin{equation}\label{eq:f-alpha}
    f_\alpha(t):=\begin{cases}
+1, & \text{if }t\in [0,\alpha+\pi),\\
        -1, & \text{if }t\in [\alpha+\pi,2\pi).        
    \end{cases}
\end{equation}
The function $f_\alpha$ is extended to be a $2\pi$-periodic function
in the whole $\R$. In particular, for every $\alpha\in (-\pi,\pi]$,
\begin{equation*}
   f_\alpha(\alpha)=
   \begin{cases}
        -1,&\text{if }\alpha\in(-\pi,0),\\
        1,&\text{if }\alpha\in[0,\pi].        
    \end{cases}
\end{equation*}
In particular \eqref{eq:eige_0} and $(E_0)$ have the same
eigenvalues. We also notice that, if $u$ is an eigenfunction of
$(E_0)$ satisfying \eqref{eq:propertyP}, then $G_\alpha(u)$ is real
valued for every $\alpha\in(-\pi,\pi]$.

The following result from \cite[Proposition
3.6]{FelliNorisOgnibeneSiclari2023} contains a description of the
behavior of any weak solution $v$ to \eqref{eq:eige_0} near the tip of
the crack $\Gamma_0^\alpha$. This is actually a reformulation of
\eqref{eq:131}, once we have performed the gauge transformation
\eqref{eq:gauge0}.
\begin{proposition}\label{p:asyeige}
  If $u\in H^{1,0}_{0}(\Omega,\C)\setminus\{0\}$ is a weak solution to
  $(E_0)$, in the sense clarified in \eqref{eq:weaksense}, for some
  $\lambda$, and $u$ satisfies \eqref{eq:propertyP}, then there exist
  $k=k(u)\in\N$ odd, $\beta=\beta(u)\in \R\setminus\{0\}$, and
  $\omega=\omega(u)\in \big[0,\frac{2\pi}{k}\big)$ such that
\begin{equation}\label{eq:asy-u-k-reale}
  r^{-k/2} u(r\cos t,r\sin t) \to 
  \beta e^{i\frac t2}
  \sin\big(\tfrac k2(t-\omega)\big) \quad \text{in }C^{1,\tau}([0,2\pi],\C)
\end{equation}
as $r\to0^+$ for all $\tau\in (0,1)$. Furthermore, if
$\alpha\in(-\pi,\pi]$ and $v=G_\alpha(u)$, then $v$ is a real-valued
nontrivial weak solution to \eqref{eq:eige_0}, in the sense clarified
in \eqref{eq:eige-0-weak}, and
        \begin{equation}\label{eq:behaviour-lim}
    r^{-k/2}v(r\cos t,r\sin t) \to \beta f_\alpha(t) \sin\big(\tfrac{k}{2} (t-\omega)\big) \qquad \text{as }r\to0^+
    \end{equation}
    in $C^{1,\tau}([0,2\pi]\setminus\{\alpha+\pi\})$, where $f_\alpha$
    is defined in \eqref{eq:f-alpha}.
\end{proposition}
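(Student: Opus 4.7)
The plan is to derive the proposition directly from the known expansion \eqref{eq:131-old} combined with the $K$-real property \eqref{eq:propertyP}, and then to translate everything through the gauge \eqref{eq:gauge0}.

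First, I would invoke \eqref{eq:131-old} (coming from \cite[Theorem 1.3]{FFT}) to produce an odd $k\in\N$ and coefficients $\beta_1,\beta_2\in\C$, not both zero, with
\[
r^{-k/2} u(r\cos t,r\sin t) \to e^{i t/2}\bigl(\beta_1\cos(\tfrac k2 t)+\beta_2\sin(\tfrac k2 t)\bigr)
\qquad\text{in }C^{1,\tau}([0,2\pi],\C).
\]
Multiplying by $e^{-it/2}$ and using \eqref{eq:propertyP}, the left-hand side is real-valued, so the limit $\beta_1\cos(\tfrac k2 t)+\beta_2\sin(\tfrac k2 t)$ must be real on $[0,2\pi]$; since $\cos(\tfrac k2 t)$ and $\sin(\tfrac k2 t)$ are linearly independent over $\R$, this forces $\beta_1,\beta_2\in\R$.

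Next, I would rewrite the real trigonometric expression $\beta_1\cos(\tfrac k2 t)+\beta_2\sin(\tfrac k2 t)$ in amplitude-phase form $\beta\sin\bigl(\tfrac k2(t-\omega)\bigr)$ by choosing $\beta\in\R\setminus\{0\}$ with $|\beta|=\sqrt{\beta_1^2+\beta_2^2}$ and $\omega$ satisfying $\cos(\tfrac k2\omega)=\beta_2/\beta$, $\sin(\tfrac k2\omega)=-\beta_1/\beta$. Since replacing $\omega$ by $\omega+\tfrac{2\pi}{k}$ flips the sign of the sine, which can be absorbed into the sign of $\beta$, I can reduce to $\omega\in\bigl[0,\tfrac{2\pi}{k}\bigr)$. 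This produces \eqref{eq:asy-u-k-reale}.

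For the second part, the preceding discussion already shows that the gauge map \eqref{eq:gauge0} sends weak solutions of $(E_0)$ bijectively to weak solutions of \eqref{eq:eige_0}, so $v=G_\alpha(u)$ is a nontrivial weak solution of \eqref{eq:eige_0}. Reality follows by inspection: $G_\alpha(u)(r\cos t,r\sin t)=f_\alpha(t)\,e^{-it/2}u(r\cos t,r\sin t)$, with $f_\alpha\in\{\pm1\}$, while $e^{-it/2}u$ is real-valued by \eqref{eq:propertyP}. Finally, the asymptotic \eqref{eq:behaviour-lim} is obtained by multiplying \eqref{eq:asy-u-k-reale} by $f_\alpha(t)e^{-it/2}$, which cancels the factor $e^{it/2}$ in the limit and yields
\[
r^{-k/2}v(r\cos t,r\sin t)\to \beta f_\alpha(t)\sin\bigl(\tfrac k2(t-\omega)\bigr).
\]
The $C^{1,\tau}$-convergence from \eqref{eq:asy-u-k-reale} is preserved on any compact subset of $[0,2\pi]\setminus\{\alpha+\pi\}$, since $f_\alpha$ is smooth away from its jump at $t=\alpha+\pi$.

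There is essentially no serious obstacle here: the proposition is a bookkeeping reformulation of \eqref{eq:131-old} in the gauged picture. The only mildly delicate points are enforcing $\omega\in[0,\tfrac{2\pi}{k})$ (handled by absorbing a sign into $\beta$) and keeping track of the jump point of $f_\alpha$ when stating the regularity of the convergence; both are straightforward.
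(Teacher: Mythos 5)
Your proof is correct and follows the route the paper itself sketches: the paper gives no standalone proof of Proposition \ref{p:asyeige}, but instead cites \cite[Proposition 3.6]{FelliNorisOgnibeneSiclari2023} and states (both in the introduction, after \eqref{eq:131-old}, and in the remark just before the proposition) that it is ``a reformulation of \eqref{eq:131}, once we have performed the gauge transformation \eqref{eq:gauge0}.'' Your argument supplies exactly the bookkeeping behind that claim: using \eqref{eq:propertyP} to force $\beta_1,\beta_2\in\R$ in \eqref{eq:131-old}, passing to amplitude--phase form, normalizing $\omega$ into $[0,\tfrac{2\pi}{k})$ by absorbing a sign into $\beta$, and then carrying the expansion through $G_\alpha$, noting that $f_\alpha(t)e^{-it/2}$ is locally smooth away from $t=\alpha+\pi$ so the $C^{1,\tau}$ convergence is preserved there. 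This matches the paper's intent; no gap.
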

We observe that the quantities $\beta=\beta(u)$, $k=k(u)$, and
$\omega=\omega(u)$ appearing in \eqref{eq:behaviour-lim} are
independent of $\alpha$ and depend only on $u$. The dependence on
$\alpha$ in the right-hand side of \eqref{eq:behaviour-lim} is only
seen in the term $f_\alpha$ (which on the other hand is independent of
$u$.).

If $u$ is an eigenfunction of $(E_0)$ satisfying \eqref{eq:propertyP}
and $\alpha\in(-\pi,\pi]$, we denote
    \begin{equation}\label{eq:psi}
      \Psi_\alpha^{u}(x)=\Psi_\alpha^u(r\cos t,r\sin t)=
      \beta f_\alpha(t) \,r^{\frac k2}\sin\big(\tfrac{k}{2} (t-\omega)\big),
    \end{equation}
    with $\beta=\beta(u)$, $k=k(u)$, and $\omega=\omega(u)$ being as in \eqref{eq:asy-u-k-reale}.

\subsection{Some key quantities}
In the asymptotic expansion for simple Aharonov-Bohm eigenvalues
derived in \cite{FelliNorisOgnibeneSiclari2023}, in the more general
framework of many coalescing poles, the dominant term is related to
the minimum of an energy functional, associated with the configuration
of poles and defined on a space of functions suitably jumping through
some cracks.  We recall now how this quantity is defined, in the
particular case of a single pole considered in the present paper,
highlighting in the notation the dependence on the pole and the slope
of motion, as we will vary them in the subsequent argument.

For every eigenfunction $v$ of problem \eqref{eq:eige_0} and
$a=|a|(\cos\alpha,\sin\alpha)\in\R^2\setminus\{0\}$, we define the
linear functional $L_a^v: \ \mathcal H_a \to \R$ as
\[
L_a^v(w):= 2\int_{S_a} \nabla v\cdot \nu_\alpha \,\gamma_+^\alpha(w)\,dS,
\]
and $J_a^v: \ \mathcal H_a \to \R$ as  
\[
J_a^v(w):= \frac12 \int_{\Omega\setminus\Gamma_a} |\nabla w|^2\,dx +  L_a^v(w).
\]
As observed in \cite[Proposition 4.2]{FelliNorisOgnibeneSiclari2023},
for every eigenfunction $u$ of $(E_0)$ satisfying \eqref{eq:propertyP}
and every $a\in \R^2\setminus\{0\}$ belonging to the half-line from
$0$ with slope $\alpha$, there exists a unique $U_a^u\in \mathcal H_a$
such that
\begin{equation}\label{eq:def-potenziali}
     U^u_a-G_\alpha(u)\in \widetilde{\mathcal H}_a\quad\text{and}\quad 
     J_a^{G_\alpha(u)}(U^u_a)=\mathcal E_a^u,
\end{equation}
where 
\begin{equation}\label{eq:def-E-u-a}
\mathcal E_a^u:=\min\left\{
    J_a^{G_\alpha(u)}(w):w\in \mathcal H_a\text{ and }w-G_\alpha(u)\in \widetilde{\mathcal H}_a\right\}.
\end{equation}
After scaling and blow-up, the limit problem is described in terms of
cracks, spaces and functionals depending only on the direction
$\alpha$. For every $\alpha\in (-\pi,\pi]$, we define
\begin{align*}
    &\Gamma^\alpha_1=\Gamma_{(\cos\alpha,\sin\alpha)}=\{ t(\cos\alpha,\sin\alpha): t\in (-\infty,1] \},\\
&S^\alpha_1=S_{(\cos\alpha,\sin\alpha)}=\{ t(\cos\alpha,\sin\alpha): t\in [0,1] \}.
\end{align*}
Denoting    $D_r=\{x\in \R^2:|x|<r\}$ for every $r>0$, we then introduce the functional space
\begin{equation*}
\widetilde X_\alpha:= \left\{
\begin{array}{ll}
  \!\!w\in L^1_{\rm loc}(\R^2):\!\!\! &w\in H^1(D_r\setminus \Gamma_1^\alpha) \text{ for all }r>0, \\  [5pt]
                                      &\nabla w\in L^2(\R^2\setminus
                                        \Gamma_1^\alpha,\R^2),\
                                        \gamma_+^\alpha(w) + \gamma_-^\alpha(w)=0 \text{ on }\Gamma_0^\alpha
\end{array}\!\!
\right\},
\end{equation*}
endowed with the norm 
\begin{equation*}
    \|w\|_{\widetilde X_\alpha}=\left(\int_{\R^2\setminus \Gamma_1^\alpha}|\nabla w|^2\,dx\right)^{\!1/2},
\end{equation*}
and its closed subspace
\begin{equation*}
    \widetilde{\mathcal H}_\alpha=\{w\in \widetilde X_\alpha:
    \gamma_+^\alpha(w) + \gamma_-^\alpha(w)=0 \text{ on }S_1^\alpha\}.
\end{equation*}
For every $\alpha\in (-\pi,\pi]$ and  every eigenfunction $u$ of $(E_0)$ satisfying \eqref{eq:propertyP},
we consider the quadratic functional 
\begin{equation}\label{d:Jtilde}
  \widetilde{J}_\alpha^u:\widetilde X_\alpha\to\R,\quad
  \widetilde{J}_\alpha^u(w)=\frac12\int_{\R^2\setminus\Gamma^\alpha_1}|\nabla w|^2\,dx+\widetilde{L}_\alpha^u(w),
\end{equation}
where 
\begin{equation}\label{d:Ltilde}
    \widetilde{L}_\alpha^u:\widetilde X_\alpha\to\R,\quad
    \widetilde{L}_\alpha^u(w)
    =2\int_{S^\alpha_1}\nabla \Psi^u_\alpha\cdot \nu_\alpha \gamma^\alpha_+(w)\,dS.
\end{equation}
As proved in \cite[Proposition 6.4]{FelliNorisOgnibeneSiclari2023},
there exists a unique $\widetilde{U}_\alpha^u\in \widetilde X_\alpha$
such that
\[
\widetilde{U}_\alpha^u- \eta \Psi_\alpha^u \in \widetilde{\mathcal H}_\alpha
\quad\text{and}\quad \widetilde{J}_\alpha^u (\widetilde{U}_\alpha^u) = \widetilde{\mathcal E}_\alpha^{u},
\]
where 
\begin{equation}\label{d:Etilde}
\widetilde{\mathcal E}_\alpha^{u}:=    \min\{
\widetilde{J}_\alpha^u(w):
\ w\in \widetilde X_\alpha\text{ and } w- \eta \Psi_\alpha^u \in\widetilde{\mathcal H}_\alpha \},
\end{equation}
being 
$\eta\in C^\infty_{\rm c}(\R^2)$  a fixed radial cut-off function 
such that
\begin{equation*}
        \begin{cases} 
        0\le \eta(x) \le 1 \text{ for all $x \in \R^2$},\\ 
    \eta(x)=1 \text{ if $x\in D_{1}$},\quad \eta(x)=0 \text{ if $x\in \R^2 \setminus D_{2}$},\\
    |\nabla \eta|\leq 2 \text{ in $D_{2}\setminus D_{1}$}.
\end{cases}
\end{equation*}

We recall from \cite{FelliNorisOgnibeneSiclari2023} the
following key results. The first one asserts the negligibility of the
$L^2$ mass with respect to the energy of the functions $U_a^u$, as the
pole approaches the origin, whereas the second one is a blow-up result
for both the functions $U_a^u$ and the quantities $\mathcal E_a^u$.
We note that the simplicity assumption required throughout the paper
\cite{FelliNorisOgnibeneSiclari2023} is not, however, used to prove
these two specific results, which therefore hold for any eigenfunction
$u$ of $(E_0)$ satisfying \eqref{eq:propertyP}, possibly associated
with a multiple eigenvalue.

\begin{lemma}{\rm (\cite[Proposition 4.6]{FelliNorisOgnibeneSiclari2023})}\label{l:prop4.6FNOS}
Let   $u$ be an eigenfunction of $(E_0)$ satisfying the property \eqref{eq:propertyP}. 
 If $\alpha\in (-\pi,\pi]$ and $a=|a|(\cos\alpha,\sin\alpha)$, then
 \[
 \int_{\Omega} |U_a^u|^2\,dx = o(\|U_a^u\|_{\mathcal H_a}^2) \quad \text{as }|a|\to0.
 \]
\end{lemma}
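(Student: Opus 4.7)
The plan is a blow-up analysis at the pole, combined with a careful asymptotic study at infinity of the limit profile $\widetilde U_\alpha^u$.

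Let $k$ be the odd integer associated with $u$ via \eqref{eq:asy-u-k-reale}, and introduce the rescaled function
\[
\hat U_a(y):=|a|^{-k/2}\,U_a^u(|a|y),\qquad y\in (\Omega/|a|)\setminus \Gamma_1^\alpha.
\]
A direct change of variables gives
\begin{equation*}
  \|U_a^u\|_{\mathcal H_a}^2=|a|^k\!\int_{(\Omega/|a|)\setminus\Gamma_1^\alpha}\!|\nabla \hat U_a|^2\,dy,\qquad \int_\Omega |U_a^u|^2\,dx=|a|^{k+2}\!\int_{\Omega/|a|}\!|\hat U_a|^2\,dy.
\end{equation*}
Hence the claim reduces to showing, first, that $\int_{(\Omega/|a|)\setminus\Gamma_1^\alpha}|\nabla \hat U_a|^2\,dy$ stays bounded below by a positive constant, and, second, that $\int_{\Omega/|a|}|\hat U_a|^2\,dy=o(|a|^{-2})$ as $|a|\to 0$.

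Using the expansion of $G_\alpha(u)$ near $0$ provided by Proposition \ref{p:asyeige}, the comparison of $\mathcal E_a^u=J_a^{G_\alpha(u)}(U_a^u)$ with $J_a^{G_\alpha(u)}$ evaluated at the natural competitor obtained by gluing $G_\alpha(u)$ with a truncated rescaled copy of $\widetilde U_\alpha^u$ yields a uniform bound on $\int_{(\Omega/|a|)\setminus\Gamma_1^\alpha}|\nabla \hat U_a|^2\,dy$. Standard compactness and lower semi-continuity, together with the Euler--Lagrange equation and the uniqueness of the minimizer of $\widetilde J_\alpha^u$, identify the $\widetilde X_\alpha$-weak limit of $\hat U_a$ with $\widetilde U_\alpha^u$ and yield the convergence of the gradient norms to $\|\widetilde U_\alpha^u\|_{\widetilde X_\alpha}^2>0$. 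Simplicity of the limit eigenvalue plays no role here; the argument rests only on the asymptotic expansion of the single eigenfunction $u$, guaranteed by property \eqref{eq:propertyP}.

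The crux is the asymptotic decay of $\widetilde U_\alpha^u$ at infinity. Outside a large disk containing $S_1^\alpha$, the function $\widetilde U_\alpha^u$ is harmonic on $\R^2\setminus\Gamma_0^\alpha$, has gradient in $L^2$, and satisfies antisymmetric trace conditions on $\Gamma_0^\alpha$ for both itself and its normal derivative (the latter arising as the natural boundary condition from the minimization). Separation of variables in polar coordinates decomposes it into modes $r^{\pm m/2}\sin(\tfrac m 2(t-\omega))$ and $r^{\pm m/2}\cos(\tfrac m 2(t-\omega))$ with $m$ odd, and the $L^2$-gradient condition at infinity forces all coefficients of the growing $r^{m/2}$ modes to vanish. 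Consequently $|\widetilde U_\alpha^u(y)|=O(|y|^{-1/2})$ as $|y|\to\infty$, so that $\int_{D_R}|\widetilde U_\alpha^u|^2\,dy=O(R)$ as $R\to\infty$.

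The hardest part is transferring this estimate to $\hat U_a$ uniformly on the annulus $\{R_0<|y|<\mathrm{diam}(\Omega)/|a|\}$, since the blow-up a priori only provides $L^2_{\rm loc}$ convergence. One obtains it by a Caccioppoli/mean-value iteration on overlapping dyadic annuli, exploiting the uniform gradient bound and the harmonic character of $\hat U_a$ there: this yields $|\hat U_a(y)|\lesssim |y|^{-1/2}$ on the annulus. Integration then gives $\int_{\Omega/|a|}|\hat U_a|^2\,dy=O(1/|a|)=o(|a|^{-2})$, which, combined with the identities above, completes the proof.
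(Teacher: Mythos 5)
This lemma is quoted directly from \cite[Proposition 4.6]{FelliNorisOgnibeneSiclari2023}; the present paper gives no proof of its own, so there is nothing in the text to compare your argument against. Assessed on its own merits, your blow-up strategy is reasonable in outline: the scaling identities are correct, the competitor construction for an $|a|$-uniform bound on $\|\hat U_a\|_{\widetilde X_\alpha}$ is standard, the identification of the weak limit with $\widetilde U_\alpha^u$ does not require the lemma itself (so there is no circularity with Theorem~\ref{t:teo2.2FNOS}), and the separation-of-variables analysis showing $|\widetilde U_\alpha^u(y)|=O(|y|^{-1/2})$ is correct (the anti-periodic trace condition on $\Gamma_0^\alpha$ forces all angular frequencies to be half-odd-integers, so the lowest decaying mode is $|y|^{-1/2}$ and there is no logarithm).

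The genuine gap is in the step you yourself flag as hardest. A ``Caccioppoli/mean-value iteration'' does not by itself yield $|\hat U_a(y)|\lesssim|y|^{-1/2}$: mean-value bounds for harmonic functions control pointwise values by nearby $L^2$ averages, which is exactly the quantity you are trying to estimate, while Caccioppoli controls gradients by $L^2$ mass, i.e.\ goes the wrong way. What the argument really needs is a uniform non-concentration of Dirichlet energy at infinity: for every $\varepsilon>0$ there should be $R$ with $\int_{(\Omega/|a|)\setminus D_R}|\nabla\hat U_a|^2\,dy<\varepsilon$ for all small $|a|$. Combined with a Poincar\'e inequality on dyadic annuli $D_{2r}\setminus D_r$ (valid \emph{without} mean subtraction thanks to the anti-periodic trace condition, so $\int_{D_{2r}\setminus D_r}|\hat U_a|^2\lesssim r^2\int_{D_{2r}\setminus D_r}|\nabla\hat U_a|^2$), this gives $\int_{\Omega/|a|}|\hat U_a|^2=o(|a|^{-2})$ and hence the claim. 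Without the uniform tail-smallness, the annular Poincar\'e bound only delivers $\int_{\Omega/|a|}|\hat U_a|^2 \lesssim |a|^{-2}\|\hat U_a\|_{\widetilde X_\alpha}^2$, which translates into $\int_\Omega|U_a^u|^2=O(\|U_a^u\|_{\mathcal H_a}^2)$, not $o$. The missing non-concentration can be extracted from the same competitor comparison you already invoke for the upper bound (the competitor's excess energy outside $D_R$ is $o_R(1)$ uniformly in $a$, and the minimizer cannot exceed it), but this has to be spelled out; as written, the decay transfer is asserted rather than proved.
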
\begin{theorem}{\rm (\cite[Theorem 2.2 and Proposition
    6.7]{FelliNorisOgnibeneSiclari2023})}\label{t:teo2.2FNOS}
  Let $u$ be an eigenfunction of $(E_0)$ satisfying
  \eqref{eq:propertyP} and having at 0 a zero or order $k/2$ for some
  $k\in\N$ odd. Let $\alpha\in(-\pi,\pi]$ and
  $a=|a|(\cos\alpha,\sin\alpha)$. Then
 \begin{align*}
     &\lim_{|a|\to0} |a|^{-k} \mathcal E_a^u = \widetilde{\mathcal E}_\alpha^{u},\\
     &\lim_{|a|\to0} |a|^{-k} \big(\mathcal E_a^u -
       L_a^{G_\alpha(u)}(G_\alpha(u))\big)=
       \widetilde{\mathcal E}^u_\alpha- \widetilde{L}^u_\alpha(\Psi_\alpha^u),\\
     &\|U^u_a\|_{\mathcal H_a}=O(|a|^{k/2})\quad\text{as }|a|\to0,\\
     &\lim_{|a|\to0} |a|^{-k/2}U^u_a(|a|\cdot)=\widetilde{U}_\alpha^u\quad\text{in }\widetilde X_\alpha,
 \end{align*}
where $U^u_a$ is extended trivially in $\R^2\setminus\Omega$.
\end{theorem}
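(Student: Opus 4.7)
The plan is to perform a blow-up analysis of the minimizer $U_a^u$ at the scale $|a|$, matching the inner limit problem \eqref{d:Etilde} to the outer problem \eqref{eq:def-E-u-a}, and to deduce the energy asymptotics from the variational convergence.

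I would begin with the \emph{upper bound} $\limsup_{|a|\to 0} |a|^{-k}\mathcal E_a^u\le\widetilde{\mathcal E}_\alpha^u$. The natural competitor in \eqref{eq:def-E-u-a} is obtained by rescaling the limit minimizer $\widetilde U_\alpha^u$ to the scale $|a|$ (as $|a|^{k/2}\widetilde U_\alpha^u(\cdot/|a|)$ on $D_{R|a|}$ for $R$ large), joined through a cutoff near $\partial D_{R|a|}$ to a correction term that adjusts the boundary datum on the jump set from $\eta\Psi_\alpha^u$ (the datum that $\widetilde U_\alpha^u$ sees) to $G_\alpha(u)$ (the datum that $U_a^u$ sees). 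Since by Proposition \ref{p:asyeige} we have $G_\alpha(u)-\Psi_\alpha^u=o(|x|^{k/2})$ in the relevant $C^{1,\tau}$ sense, the correction is subleading; changes of variables turn the Dirichlet and boundary terms of $J_a^{G_\alpha(u)}$ into $|a|^k$ times those of $\widetilde J_\alpha^u$, up to $o(|a|^k)$.

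Next, I would establish the a priori bound $\|U_a^u\|_{\mathcal H_a}=O(|a|^{k/2})$. Plugging $U_a^u$ into $J_a^{G_\alpha(u)}$, using $\mathcal E_a^u\le C|a|^k$ from the upper bound and the fact that $U_a^u$ realizes the minimum, one gets $\tfrac12\|U_a^u\|_{\mathcal H_a}^2\le |L_a^{G_\alpha(u)}(U_a^u)|+C|a|^k$. The boundary functional $L_a^{G_\alpha(u)}$ is controlled by a trace inequality on $S_a$ with constant of order $|a|^{k/2}$ (because $\nabla G_\alpha(u)=O(|x|^{k/2-1})$ on $S_a$), yielding the claimed bound via Young's inequality. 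Lemma \ref{l:prop4.6FNOS} ensures the $L^2$ mass is indeed negligible for later use.

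Then comes the \emph{blow-up} step. Set $V_a(y):=|a|^{-k/2}U_a^u(|a|y)$, trivially extended outside $\Omega/|a|$. The a priori bound gives $\|V_a\|_{\widetilde X_\alpha}\le C$ uniformly, so a subsequence converges weakly in $\widetilde X_\alpha$ to some $V^*$. By rescaling, $V_a-|a|^{-k/2}G_\alpha(u)(|a|\cdot)\in\widetilde{\mathcal H}_a$, and Proposition \ref{p:asyeige} gives $|a|^{-k/2}G_\alpha(u)(|a|\cdot)\to\Psi_\alpha^u$ on compact sets, so $V^*-\eta\Psi_\alpha^u\in\widetilde{\mathcal H}_\alpha$ (after multiplication by the fixed cutoff $\eta$, whose tail is irrelevant as $\nabla V^*\in L^2$). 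Testing the Euler--Lagrange equation satisfied by $V_a$ against arbitrary $\varphi\in \widetilde{\mathcal H}_\alpha$ and passing to the limit (using $C^{1,\tau}$ convergence of the rescaled trace data on $S_1^\alpha$) shows that $V^*$ solves the Euler--Lagrange equation associated with $\widetilde J_\alpha^u$. By uniqueness of the limit minimizer, $V^*=\widetilde U_\alpha^u$, hence the full family converges. Combined with lower semicontinuity of the Dirichlet energy and the already-proved upper bound, this upgrades weak to strong convergence in $\widetilde X_\alpha$ and delivers the energy limit $|a|^{-k}\mathcal E_a^u\to\widetilde{\mathcal E}_\alpha^u$. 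The second energy limit then follows by writing $L_a^{G_\alpha(u)}(G_\alpha(u))=L_a^{\Psi_\alpha^u}(\Psi_\alpha^u)+o(|a|^k)$, computed explicitly from \eqref{eq:psi} and a change of variables to reduce to $\widetilde L_\alpha^u(\Psi_\alpha^u)$.

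The main obstacle I expect is the passage to the limit in the boundary term $L_a^{G_\alpha(u)}$ on $S_a$: one needs uniform trace estimates on the rescaled cracked domains $(\Omega/|a|)\setminus\Gamma_1^\alpha$ and sharp control of $G_\alpha(u)-\Psi_\alpha^u$ close to the crack tip, together with care that the cutoff/extension used to produce competitors does not generate boundary contributions of order exactly $|a|^k$. Once those trace bounds are in place, and one exploits the $C^{1,\tau}$ convergence in Proposition \ref{p:asyeige} rather than merely $H^1$-type convergence, the rest of the argument is a standard variational blow-up.
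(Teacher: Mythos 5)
This statement is quoted in the paper directly from \cite[Theorem 2.2 and Proposition 6.7]{FelliNorisOgnibeneSiclari2023}; the paper does not prove it, so there is no internal proof to compare against. That said, your sketch is the standard variational blow-up scheme that such results rely on (upper bound via a rescaled-and-glued competitor, a priori $O(|a|^{k/2})$ bound via a weighted trace/Hardy estimate on $S_a$ plus Young's inequality, weak compactness at scale $|a|$, identification of the limit through the Euler--Lagrange equations using the $C^{1,\tau}$ asymptotics from Proposition~\ref{p:asyeige}, and upgrading to strong $\widetilde X_\alpha$-convergence by matching the lower-semicontinuous Dirichlet part with the already proved upper bound on the energy). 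You also correctly isolate the two genuinely delicate points: the uniform trace bound on the cracked rescaled domains and the sharp control of $G_\alpha(u)-\Psi_\alpha^u$ near the crack tip, without which the boundary term $L_a$ could contribute at order $|a|^k$ and spoil both the a priori bound and the identification of the limit. Given the external reference is inaccessible here, I cannot check line-by-line fidelity, but the outline is sound and matches the approach one expects in \cite{FelliNorisOgnibeneSiclari2023}.
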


\section{Perturbation theory around the origin along a fixed direction}\label{sec:appLemma}

In order to study the genericity properties of simple Aharonov--Bohm
eigenvalues, we start analyzing the asymptotic behavior of
eigenbranches departing from a fixed eigenvalue $\lambda_N^0$ of
$(E_0)$ (which is consequently also an eigenvalue of
\eqref{eq:eige_0}), as the perturbed pole is moving along a fixed
  direction $(\cos\alpha,\sin\alpha)$, towards the limit pole located
  at $0$. The strategy will be then studying how the coefficient
  of the dominant term depends on the angle $\alpha$, in order to
identify the directions for which the branches have different
  leading terms (either in the order or in the coefficient), thus
  remaining detached from each other.

In general, we assume that 
\begin{align}\label{eq:multiple}
&\text{the multiplicity of $\lambda_N^0$
(both as an eigenvalue of $(E_0)$}\\
\notag & \text{and as an eigenvalue of \eqref{eq:eige_0})
is $m\ge1$}.
\end{align}
We choose the index $N$ in such a way that 
\begin{equation}\label{eq:index}
  \lambda_N^0=  \lambda_{N+i-1}^0\quad\text{for all }i\in\{1,\dots,m\}.
\end{equation}
We consider the associated $K$-real eigenspace 
\begin{equation}\label{eq:Elambda0}
    E(\lambda_N^0)=\{\varphi\in H^{1,0}_{0}(\Omega,\C): \varphi \text{
      weakly solves }(E_0)
    \text{ with $\lambda=\lambda_N^0$ and satisfies \eqref{eq:propertyP}}\} 
\end{equation}
conceived as a real vector space, which, under assumption
\eqref{eq:multiple}, turns out to have dimension $m$.

A first asymptotic expansion along a fixed direction $\alpha$ is
contained in the following result.
\begin{theorem}\label{thm:approxEVs} 
Let $\alpha\in(-\pi,\pi]$ and $a=|a|(\cos\alpha,\sin\alpha)\in \R^2\setminus\{0\}$. 
For every $i\in\{1,\dots,m\}$, the eigenbranches can be expanded as
\begin{equation}\label{eq:asymptEV}
	\lambda_{N+i-1}^a=\lambda_N^0+\mu_i^a+o(\chi_a^2) \mbox{ as }|a|\to0,
\end{equation}
where
\begin{equation}\label{eq:chiEps}
  \chi_a:=\sup\left\{\| U_a^u\|_{\mathcal H_a}+\sup_{S_a}|u| :
    \,u\in E(\lambda_N^0)\mbox{ and } \|u\|_{L^2(\Omega,\C)}=1\right\}
\end{equation}
and $\{\mu_i^a\}_{i=1}^m$ are the eigenvalues (taken in non-decreasing
order) of the bilinear form
\begin{align}\label{eq:def-r_a}
    r_a(u,w) &:=2 \int_{S_a} (\nabla
               G_\alpha(u)\cdot\nu_\alpha)\gamma_+^\alpha(U_a^w)\,dS -
               2 \int_{S_a} (\nabla G_\alpha(u)\cdot  \nu_\alpha)G_\alpha(w)\,dS\\
   \notag &\quad + 2 \int_{S_a} (\nabla
            G_\alpha(w)\cdot\nu_\alpha)\gamma_+^\alpha(U_a^u)\,dS
            -2 \int_{S_a} (\nabla G_\alpha(w)\cdot    \nu_\alpha)G_\alpha(u) \,dS\\
        \notag&\quad+  \int_{\Omega\setminus\Gamma_a}\nabla
                U_a^u\cdot\nabla U_a^w\,dx
                - \lambda_N^0\int_\Omega U_a^u U_a^w\,dx , 
\end{align}
defined for $u,w\in E(\lambda_N^0)$, 
where $U_a^u,U_a^w\in \mathcal H_a$ are as in \eqref{eq:def-potenziali}--\eqref{eq:def-E-u-a}.
\end{theorem}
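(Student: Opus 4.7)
The plan is to apply an abstract Kato--Rellich style perturbation argument directly in the form domain, using the gauge-transformed real problem $(P_a)$. Let $\{\varphi_1, \dots, \varphi_m\}$ be an $L^2$-orthonormal basis of the $K$-real eigenspace $E(\lambda_N^0)$. First I would construct, for each $i$, the candidate approximating function
\[
v_a^{\varphi_i} := G_\alpha(\varphi_i) - U_a^{\varphi_i},
\]
which belongs to $\widetilde{\mathcal H}_a$ by the very definition \eqref{eq:def-potenziali}. The point is that $G_\alpha(\varphi_i)$ already satisfies the antisymmetry across $\Gamma_0^\alpha$, while $U_a^{\varphi_i}$ corrects the spurious continuity of $G_\alpha(\varphi_i)$ across the newly added crack segment $S_a$; by Theorem \ref{t:teo2.2FNOS}, $\|U_a^{\varphi_i}\|_{\mathcal H_a} = O(|a|^{k/2})$, so $v_a^{\varphi_i}$ is genuinely a small perturbation of $G_\alpha(\varphi_i)$.

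Next I would compute the Rayleigh--Ritz matrix on the $m$-dimensional trial space $V_a := \mathrm{span}\{v_a^{\varphi_1}, \dots, v_a^{\varphi_m}\}\subset \widetilde{\mathcal H}_a$. Expanding
\[
\int_{\Omega \setminus \Gamma_a} \nabla v_a^{\varphi_i} \cdot \nabla v_a^{\varphi_j}\,dx - \lambda_N^0 \int_\Omega v_a^{\varphi_i} v_a^{\varphi_j}\,dx,
\]
the eigenfunction identity kills the leading $G_\alpha(\varphi_i)$--$G_\alpha(\varphi_j)$ piece, and the cross terms $\int \nabla G_\alpha(\varphi_i) \cdot \nabla U_a^{\varphi_j}\,dx$ must be integrated by parts. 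On $\partial\Omega$ boundary contributions vanish; on $\Gamma_0^\alpha$ they cancel because the jump of $\partial_{\nu_\alpha} G_\alpha(\varphi_i)$ is antisymmetric while $\gamma^\alpha_+(U_a^{\varphi_j}) + \gamma^\alpha_-(U_a^{\varphi_j}) = 0$ there (inherited from $U_a^{\varphi_j} - G_\alpha(\varphi_j) \in \widetilde{\mathcal H}_a$ and the antisymmetry of $G_\alpha(\varphi_j)$ across $\Gamma_0^\alpha$). The only surviving boundary contributions come from $S_a$; using $\gamma^\alpha_+(U_a^{\varphi_j}) + \gamma^\alpha_-(U_a^{\varphi_j}) = 2 G_\alpha(\varphi_j)$ on $S_a$, they reassemble (together with the symmetric term from $\int \nabla G_\alpha(\varphi_j) \cdot \nabla U_a^{\varphi_i}$ and the remaining $\int \nabla U_a^{\varphi_i} \cdot \nabla U_a^{\varphi_j} - \lambda_N^0 \int U_a^{\varphi_i} U_a^{\varphi_j}$) into exactly $r_a(\varphi_i, \varphi_j)$ as in \eqref{eq:def-r_a}. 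Simultaneously, the Gram matrix $\int_\Omega v_a^{\varphi_i} v_a^{\varphi_j}\,dx$ equals $\delta_{ij} + o(\chi_a^2)$, where Lemma \ref{l:prop4.6FNOS} controls the cross terms $\int G_\alpha(\varphi_i) U_a^{\varphi_j}$ and $\int U_a^{\varphi_i} U_a^{\varphi_j}$ against $\|U_a^{\varphi_j}\|_{\mathcal H_a}^2 \le \chi_a^2$.

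I would then close the argument by invoking an abstract perturbation lemma of Courtois/Ann\'e--Post type, employed in a similar spirit in \cite{AbatangeloFelli2015, AbatangeloNys2018, FelliNorisOgnibeneSiclari2023}: if $V_a \subset \widetilde{\mathcal H}_a$ is an $m$-dimensional subspace whose Rayleigh matrix is $\lambda_N^0 I + R_a + o(\chi_a^2)$ (with $R_a$ the matrix of $r_a$ in the chosen basis), and whose elements approximate up to $o(\chi_a)$ in $\mathcal H_a$-norm a true spectral cluster near $\lambda_N^0$, then the $m$ eigenvalues of $(P_a)$ lying in a neighbourhood of $\lambda_N^0$ are precisely $\lambda_N^0 + \mu_i^a + o(\chi_a^2)$, where $\mu_i^a$ are the eigenvalues of $R_a$. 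That exactly $m$ eigenvalues live in this neighbourhood, and that they coincide with $\lambda_{N+i-1}^a$ for $i = 1, \dots, m$, follows from Remark \ref{rem:analiticita-multipli} combined with the continuity in Theorem \ref{thm:reg-a}.

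The main technical obstacle I foresee is producing the lower bound matching the Rayleigh--Ritz upper bound, namely showing that no true eigenvalue near $\lambda_N^0$ is missed. Concretely, one must prove that the $L^2$-projection of any true eigenfunction of $(P_a)$ with eigenvalue in the small spectral window onto $V_a$ remains non-degenerate as $|a|\to0$, so that the eigenvalue count with multiplicity matches. This is precisely why the quantity $\chi_a$ in \eqref{eq:chiEps} is defined as a simultaneous supremum of $\|U_a^u\|_{\mathcal H_a}$ and $\sup_{S_a}|u|$ over normalized $u \in E(\lambda_N^0)$: these two scales jointly control the mismatch between $G_\alpha(u)$ and $v_a^u$ both in bulk energy and on the small crack segment, and it is exactly this combined control that upgrades the remainder from $O(\chi_a^2)$ to $o(\chi_a^2)$.
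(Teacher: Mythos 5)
Your proposal follows essentially the same route as the paper: take the trial functions $v_a^{\varphi_i}=G_\alpha(\varphi_i)-U_a^{\varphi_i}\in\widetilde{\mathcal H}_a$ (these are precisely $\Pi_a\varphi_i$ in the paper's notation), compute the restricted Rayleigh matrix $\big(q_a(v_a^{\varphi_i},v_a^{\varphi_j})\big)_{ij}$ by integration by parts and check it reproduces $r_a(\varphi_i,\varphi_j)$ exactly (this is Lemma~\ref{l:restrict}), and then invoke an abstract Colin de Verdi\`ere/Courtois-type perturbation lemma, which is exactly Proposition~\ref{p:appEV} from \cite{ALM2022}, to transfer the finite-dimensional spectral information back to the full operator.

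One small but worth-flagging slip: you claim the Gram matrix $\big(\int_\Omega v_a^{\varphi_i}v_a^{\varphi_j}\,dx\big)_{ij}$ equals $\delta_{ij}+o(\chi_a^2)$. The cross terms $\int_\Omega G_\alpha(\varphi_i)\,U_a^{\varphi_j}\,dx$ are only $O(\|U_a^{\varphi_j}\|_{L^2(\Omega)})=o(\chi_a)$ by Lemma~\ref{l:prop4.6FNOS}, not $o(\chi_a^2)$; so $\mathsf C_a=I+o(\chi_a)$ is all one gets. This is nevertheless sufficient, but the reason why has to be spelled out: the entries of $\mathsf R_a$ are themselves $o(\chi_a)$ (Remark~\ref{r:entries-r-a}; here the $\sup_{S_a}|u|$ term in the definition \eqref{eq:chiEps} of $\chi_a$ plays its role, controlling the terms $\int_{S_a}(\nabla G_\alpha(u)\cdot\nu_\alpha)G_\alpha(w)\,dS$), so that $\mathsf C_a^{-1}\mathsf R_a=\mathsf R_a+o(\chi_a)\cdot o(\chi_a)=\mathsf R_a+o(\chi_a^2)$. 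That product of two $o(\chi_a)$ smallnesses is exactly how the paper's Lemma~\ref{l:mu-xi} promotes the eigenvalue comparison to $o(\chi_a^2)$; the $o(\chi_a^2)$ remainder in \eqref{eq:asymptEV} does \emph{not} come from the Gram matrix alone. I would also note that the hypothesis of Proposition~\ref{p:appEV} to be verified is the bilinear bound (H3) $|q_a(v,w)|\le\delta_a\|v\|\|w\|$ with $\delta_a=o(\chi_a)$ for $v\in F_a$, $w\in\widetilde{\mathcal H}_a$ (which in turn hinges on Lemma~\ref{l:norm}), rather than the spectral-cluster approximation you describe as an input; the latter is in fact part (ii) of the conclusion.
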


Although Theorem \ref{thm:approxEVs} provides a good approximation for
perturbed eigenvalues when the limit one is simple, it is not
exhaustive for multiple ones. For instance, for some $i$ it may be
$\mu_i^a=o(\chi_a^2)$ as $|a|\to0$: in this case \eqref{eq:asymptEV}
reduces to $\lambda_{N+i-1}^a-\lambda_N=o(\chi_a^2)$, providing no
sharp quantification of the vanishing order, but just an estimate.

From Theorem \ref{t:teo2.2FNOS} and Lemma \ref{l:prop4.6FNOS} it
follows that, for every $u\in E(\lambda_N^0)\setminus\{0\}$ having at
$0$ a zero of order $k/2$ for some $k\in\N$ odd,
$\alpha\in (-\pi,\pi]$, and $a=|a|(\cos\alpha,\sin\alpha)$,
\begin{align}\label{eq:rauu}
    \lim_{|a|\to0}|a|^{-k}r_a(u,u)&=
                                    \lim_{|a|\to0}|a|^{-k}\left(2\left(J_a^{G_\alpha(u)}(U^u_\alpha)
                                    -L^{G^\alpha(u)}(G_\alpha(u))\right)
                                    -\lambda_N^0\int_\Omega|U^u_a|^2\,dx\right)\\
    \notag&=2\left(\widetilde{\mathcal E}_\alpha^{u}-\widetilde{L}^u_\alpha(\Psi_\alpha^u)\right).
\end{align}
For every $\alpha\in (-\pi,\pi]$ and $u\in E(\lambda_N^0)\setminus\{0\}$, we define 
\begin{equation}\label{d:functionC}
    \mathcal C(\alpha,u)=2\Big(\widetilde{\mathcal E}_\alpha^{u}-\widetilde{L}^u_\alpha(\Psi_\alpha^u)\Big).
\end{equation}
The function $\alpha\mapsto \mathcal C(\alpha,u)$ is understood to be
extended with periodicity $2\pi$ to the whole $\R$.

We refer to Section \ref{sec:functionC} for the study of the
properties of the function $\mathcal C(\alpha,u)$. We devote the rest
of this section to the proof of Theorem \ref{thm:approxEVs}.  For
this purpose, let us assume that the operator's pole $a$ is moving
towards $0$ along a fixed direction $\alpha \in(-\pi,\pi]$, i.e.
\[
a=|a|(\cos\alpha,\sin\alpha) \quad \text{and}\quad |a|\to0.
\]
In order to find an approximation of the perturbed eigenvalues
$\{\lambda_j^a\}$, we use a slight modification of a lemma from
G. Courtois \cite{Courtois1995}, itself based on the work of Y. Colin
de Verdiére \cite{ColindeV1986}.  We refer to \cite[Appendix]{ALM2022}
for a detailed proof.

We recall from \cite[Proposition 4.2]{FelliNorisOgnibeneSiclari2023}
that the unique function $U_a^u\in \mathcal H_a$ which attains the
minimum $\mathcal E_a^u$ in \eqref{eq:def-E-u-a} satisfies
\begin{equation}\label{eq:weakvau}
\begin{cases}
  U_a^u - G_\alpha(u) \in \widetilde{\mathcal H}_a\\[3pt]
  {\displaystyle{\int_{\Omega\setminus\Gamma_a}}} \nabla U_a^u \cdot
  \nabla w\,dx = -2 {\displaystyle{\int_{S_a}}} \nabla
  G_\alpha(u)\cdot \nu_\alpha \,\gamma_+^\alpha(w)\,dS \qquad
  \text{for all }w\in \widetilde{\mathcal H}_a.
\end{cases}
\end{equation}
We consider the quantity $\chi_a$ defined in \eqref{eq:chiEps}.  In
view of \eqref{eq:asy-u-k-reale} and \cite[Proposition
4.4]{FelliNorisOgnibeneSiclari2023}, we have the following preliminary
result.
\begin{lemma}\label{l:error}
If $\alpha\in (-\pi,\pi]$ and $a=|a|(\cos\alpha,\sin\alpha)$, then 
	\[
		\chi_a\to 0\quad\text{as }|a|\to 0.
	\]
\end{lemma}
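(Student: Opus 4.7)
The plan is to exploit the finite-dimensionality of the eigenspace $E(\lambda_N^0)$ to reduce the supremum defining $\chi_a$ to a finite number of estimates for a fixed basis, and then to invoke Theorem \ref{t:teo2.2FNOS} together with the pointwise asymptotics in \eqref{eq:131-old}.

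First I would fix an $L^2$-orthonormal basis $\{u_1,\dots,u_m\}$ of the real vector space $E(\lambda_N^0)$, which has dimension $m$ under assumption \eqref{eq:multiple}. Each $u_j$ satisfies property \eqref{eq:propertyP} and, by Proposition \ref{p:asyeige}, has at $0$ a zero of order $k_j/2$ for some odd $k_j\in\N$. Next I would check that the map $u\mapsto U_a^u$ from $E(\lambda_N^0)$ to $\mathcal H_a$ is $\R$-linear. Indeed $G_\alpha$ is linear in $u$, so given $u,w\in E(\lambda_N^0)$ and $c_1,c_2\in\R$ the function $c_1 U_a^u+c_2 U_a^w$ lies in $G_\alpha(c_1u+c_2w)+\widetilde{\mathcal H}_a$ and satisfies the weak equation in \eqref{eq:weakvau} with source depending linearly on $u$; by uniqueness of the minimizer in \eqref{eq:def-potenziali}--\eqref{eq:def-E-u-a} it must coincide with $U_a^{c_1u+c_2w}$.

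For any $u=\sum_{j=1}^m c_ju_j\in E(\lambda_N^0)$ with $\|u\|_{L^2(\Omega,\C)}=1$ one has $\sum_j c_j^2=1$ and hence $|c_j|\le 1$, so linearity gives
\[
\|U_a^u\|_{\mathcal H_a}\le\sum_{j=1}^m\|U_a^{u_j}\|_{\mathcal H_a},\qquad \sup_{S_a}|u|\le\sum_{j=1}^m \sup_{S_a}|u_j|.
\]
Theorem \ref{t:teo2.2FNOS} applied to each $u_j$ yields $\|U_a^{u_j}\|_{\mathcal H_a}=O(|a|^{k_j/2})$ as $|a|\to0$, which vanishes since $k_j\ge 1$. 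For the uniform bound on $S_a$, the convergence in \eqref{eq:131-old} (or equivalently \eqref{eq:131}) implies that $r^{-k_j/2}u_j(r\cos t,r\sin t)$ is uniformly bounded on $[0,r_0]\times[0,2\pi]$ for some $r_0>0$; hence there exists $C>0$ such that $|u_j(x)|\le C|x|^{k_j/2}$ for $|x|\le r_0$, which gives $\sup_{S_a}|u_j|\le C|a|^{k_j/2}\to 0$. Combining the two bounds and taking the supremum over the unit sphere of $L^2$ in $E(\lambda_N^0)$ yields $\chi_a\to 0$.

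The whole argument is elementary once the linearity of $u\mapsto U_a^u$ is in place, and this is the only step that is not a direct quotation of earlier results. I do not anticipate any substantial obstacle, since the finite-dimensionality of $E(\lambda_N^0)$ effectively trivialises the passage from pointwise (in $u$) vanishing, already supplied by Theorem \ref{t:teo2.2FNOS} and the local asymptotics, to uniform (in $u$) vanishing.
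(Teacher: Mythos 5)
Your argument is correct and takes essentially the route the paper leaves implicit: the paper simply points to \eqref{eq:asy-u-k-reale} and an energy estimate for $U_a^u$ from \cite{FelliNorisOgnibeneSiclari2023}, tacitly relying, as you make explicit, on the linearity of $u\mapsto U_a^u$ (which follows from uniqueness of the solution of \eqref{eq:weakvau}) and on the finite dimension of $E(\lambda_N^0)$ to upgrade pointwise-in-$u$ decay to a uniform bound over the $L^2$-unit sphere. The only cosmetic difference is that you invoke the quantitative rate $\|U_a^{u}\|_{\mathcal H_a}=O(|a|^{k/2})$ from Theorem \ref{t:teo2.2FNOS} where the paper cites the earlier \cite[Proposition~4.4]{FelliNorisOgnibeneSiclari2023}; either suffices.
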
 
We denote by $\Pi_a$ the linear map
\begin{equation*}
\begin{array}{crcl}
	\Pi_a:& E(\lambda_N^0)& \to     & \mathcal H_a\\
			 & u            & \mapsto & G_\alpha(u)-U_a^{u},
\end{array}
\end{equation*}
where $\mathcal H_a$ is considered as a subspace of $L^2(\Omega)$ and  
$E(\lambda_N^0)$ as a subspace of 
\begin{equation*}
L^2_K(\Omega):=\{u\in L^2(\Omega,\C):u\text{ satisfies \eqref{eq:propertyP}}\},    
\end{equation*}
here meant as a Hilbert space over $\R$ endowed with the scalar
product $(u,v)_{L^2_K}=\int_\Omega u\overline{v}\,dx$ (which takes
real values if $u,v\in L^2_K(\Omega)$). We observe that, in view of
\eqref{eq:weakvau},
\begin{equation}\label{eq:subspHtilde}
    \Pi_a(E(\lambda_N^0))\subseteq \widetilde{\mathcal H}_a.
\end{equation}
\begin{lemma} \label{l:norm}
	Let $\alpha\in (-\pi,\pi]$ and $a=|a|(\cos\alpha,\sin\alpha)$.
 If $M_a:=\left\|G_\alpha -\Pi_a \right\|_{{\mathcal L(E(\lambda_N^0),L^2(\Omega))}}$, then
	 \[
  M_a=o(\chi_a)\quad\text{as }|a|\to 0.
  \]
\end{lemma}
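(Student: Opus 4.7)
The plan is first to unpack the definition of $M_a$: since $\Pi_a u = G_\alpha(u) - U_a^u$ by construction, one has $(G_\alpha - \Pi_a)(u) = U_a^u$, so that
\[
M_a = \sup\bigl\{\|U_a^u\|_{L^2(\Omega)} : u \in E(\lambda_N^0),\ \|u\|_{L^2(\Omega,\C)}=1\bigr\}.
\]
The task thus reduces to showing that $\|U_a^u\|_{L^2(\Omega)} = o(\chi_a)$ uniformly as $u$ ranges over the unit $L^2$-sphere of the eigenspace $E(\lambda_N^0)$.

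The key ingredient is Lemma \ref{l:prop4.6FNOS}, which asserts that $\int_\Omega |U_a^u|^2\,dx = o(\|U_a^u\|_{\mathcal H_a}^2)$ as $|a|\to 0$ for each fixed $u \in E(\lambda_N^0)$. Combining this with the very definition \eqref{eq:chiEps} of $\chi_a$, which forces $\|U_a^u\|_{\mathcal H_a} \le \chi_a$ whenever $\|u\|_{L^2(\Omega,\C)}=1$, one immediately obtains the \emph{pointwise} statement $\|U_a^u\|_{L^2(\Omega)} = o(\chi_a)$ for each individual unit-norm $u$.

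The one issue to address is promoting this pointwise conclusion to a uniform bound, and here I would invoke the finite-dimensionality of $E(\lambda_N^0)$ (real dimension $m$, by assumption \eqref{eq:multiple}) together with the linearity of the assignment $u \mapsto U_a^u$, which is immediate from the weak formulation \eqref{eq:weakvau} and the uniqueness of its solution. Fixing any $L^2$-orthonormal basis $\{\varphi_1, \ldots, \varphi_m\}$ of $E(\lambda_N^0)$ and writing $u = \sum_{i=1}^m c_i \varphi_i$ with $\sum_i c_i^2 = 1$, we get $U_a^u = \sum_i c_i\, U_a^{\varphi_i}$, whence by Cauchy--Schwarz
\[
\|U_a^u\|_{L^2(\Omega)} \le \sqrt{m}\, \max_{1 \le i \le m} \|U_a^{\varphi_i}\|_{L^2(\Omega)}.
\]
Applying the pointwise estimate to each of the $m$ basis vectors and taking the maximum of finitely many $o(\chi_a)$ quantities yields $M_a = o(\chi_a)$, as required. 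No deeper obstruction seems to arise: the proof is essentially an unpacking of the definitions combined with Lemma \ref{l:prop4.6FNOS} and the finite-dimensionality of the limit eigenspace.
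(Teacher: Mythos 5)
Your proof is correct and follows essentially the same route as the paper's: you unpack $(G_\alpha - \Pi_a)u = U_a^u$, exploit linearity of $u\mapsto U_a^u$ together with the finite dimensionality of $E(\lambda_N^0)$ via an $L^2_K$-orthonormal basis and Cauchy--Schwarz, and then conclude with Lemma~\ref{l:prop4.6FNOS} and the bound $\|U_a^{\varphi_i}\|_{\mathcal H_a}\le\chi_a$ coming from the definition of $\chi_a$. The only cosmetic difference is that you first establish the pointwise estimate $\|U_a^{\varphi_i}\|_{L^2}=o(\chi_a)$ on basis vectors and then take the finite max, whereas the paper keeps the ratio $\|U_a^{u_{N+i-1}}\|_{L^2}/\|U_a^{u_{N+i-1}}\|_{\mathcal H_a}$ explicit inside a single chained inequality; the two are logically equivalent.
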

\begin{proof} 
  Let $u\in E(\lambda_N^0)$ be such that
  $\|u\|_{L^2_K(\Omega)}=\|u\|_{L^2(\Omega,\C)}=1$. Let us write
  $u=\sum_{i=1}^m c_i u_{N+i-1}$, for some real numbers
  $\{c_i\}_{i=1}^m$ such that $\sum_{i=1}^m c_i^2=1$, being
  $\{u_{N+i-1}\}_{i=1}^m$ a basis of $E(\lambda_N^0)$ orthonormal in
  $L^2_K(\Omega)$. By definition, we have
  $(G_\alpha-\Pi_a)u=U_a^u$. Hence, by linearity and Cauchy-Schwarz's
  inequality we find that
 \begin{align*}
   \|(G_\alpha-\Pi_a)u\|_{L^2(\Omega)} &= \|U_a^u\|_{L^2(\Omega)} \\
                                       &\le \sum_{i=1}^m
                                         |c_i|\|U^{u_{N+i-1}}_{a
                                         }\|_{L^2(\Omega)}
                                         \le \bigg(\sum_{i=1}^m
                                         c_i^2\bigg)^{\frac12}\left(\sum_{i=1}^m
                                         \|U^{u_{N+i-1}}_{a }\|^2_{L^2(\Omega)}\right)^{\frac12}\\
                                       &=\left(\sum_{i=1}^m
                                         \|U_a^{u_{N+i-1}}\|_{\mathcal
                                         H_a}^2
                                         \frac{\|U_a^{u_{N+i-1}}\|^2_{L^2(\Omega)}}
                                         {\| U_a^{u_{N+i-1}}\|_{\mathcal H_a}^2}\right)^{\frac12}\\ 
                                       &\le
                                         \sqrt{m}\,\chi_a\,\max_{1\le
                                         i\le m}
                                         \frac{\|U_a^{u_{N+i-1}}\|_{L^2(\Omega)}}{\|U_a^{u_{N+i-1}}\|_{\mathcal H_a}}.
 \end{align*}
 According to Lemma \ref{l:prop4.6FNOS} the last term is $o(\chi_a)$
 as $|a|\to0$ and this concludes the proof.
\end{proof}

In particular, Lemma \ref{l:norm} implies that $M_a<1$, so that
$\Pi_a$ is injective, for $|a|$ small enough. We will always assume
this to be the case in the rest of this section.

We now refer to a result in \cite{ALM2022}, which is a generalization
of a lemma by Colin de Verdiére \cite{ColindeV1986}. It establishes
the possibility of describing the variation of the perturbed
eigenvalues by means of a suitable quantity. The latter is given by
the control of the quadratic form that measures the variation.

\begin{proposition}[{\cite[Proposition B.1]{ALM2022}}]\label{p:appEV}
  Let $(\mathfrak H, \|\cdot\|)$ be a real Hilbert space and
  $q:\mathfrak D\times \mathfrak D\to\R$ be a symmetric bilinear form,
  with the domain $\mathfrak D$ being a dense subspace of
  $\mathfrak H$, such that
\begin{itemize}
    \item[-]  $q$ is semi-bounded from below (not necessarily positive), 
i.e. there exists $c\in\R$ such that 
$q(u,u)\geq c\|u\|^2$ for all $u\in \mathfrak D$;
\item[-] $q$ has an increasing sequence of eigenvalues  $\{ \nu_i \}_{i\geq1}$;
\item[-] there exists an orthonormal basis $\{ g_i \}_{i\geq1}$of
  $\mathfrak H$ such that $g_i\in\mathfrak D$ is an eigenvector of $q$
  associated to the eigenvalue $\nu_i$, i.e.  $q(g_i,v)=\nu_i(g_i,v)$
  for all $i\geq1$ and $v\in \mathfrak D$, being $(\cdot,\cdot)$ the
  scalar product in $\mathfrak H$.
\end{itemize}
Let $N$ and $m$ be positive integers, $F$ a $m$-dimensional subspace
of $\mathfrak D$ and $\{ \xi_i^F\}_{i=1}^m$ the eigenvalues of the
restriction of $q$ to $F$ (in ascending order).

Assume that there exist positive constants $\gamma$ and $\delta$ such that
\begin{itemize}
 \item[(H1)] $ 0<\delta<\gamma/\sqrt2$;
 \item[(H2)] for all $i\in\{1,\dots,m\}$, $|\nu_{N+i-1}|\le\gamma$,
   $\nu_{N+m}\ge \gamma$ and, if $N\ge2$, $\nu_{N-1}\le-\gamma$;
 \item[(H3)] $|q(\varphi,g)|\leq \delta\, \|\varphi \|\,\|g\|$ for all
   $g\in\mathfrak D$ and $\varphi \in F$.
\end{itemize}
Then we have
\begin{itemize}
 \item[(i)] $\left|\nu_{N+i-1}- \xi_i^F \right|\le\frac{ 4}{\gamma}\delta^2$ for all $i=1,\ldots,m$; 
 \item[(ii)]
   $\left\| \Pi_N - \mathbb{I}\right\|_{\mathcal L(F,\mathfrak H)}
   \leq { \sqrt 2}\delta/\gamma$, where $\Pi_N$ is the projection onto
   the subspace of $\mathfrak D$ spanned by
   $\{g_N,\ldots,g_{N+m-1}\}$.
\end{itemize}
\end{proposition}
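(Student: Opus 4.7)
The plan is to follow the classical Colin de Verdi\`ere strategy: expand a generic $\varphi\in F$ in the orthonormal eigenbasis $\{g_i\}$, extract a single quantitative $\ell^2$-bound weighted by $\nu_j^2$ from hypothesis (H3), and then deduce (ii) and (i) independently. For $\varphi\in F$ write $\varphi=\sum_j a_j g_j$ with $a_j=(\varphi,g_j)$, and for $g=\sum_j b_j g_j\in\mathfrak D$ use $q$-orthogonality to get $q(\varphi,g)=\sum_j \nu_j a_j b_j$. The supremum of this linear functional over unit $g$ equals $\bigl(\sum_j \nu_j^2 a_j^2\bigr)^{1/2}$, so (H3) gives the master estimate
\begin{equation*}
\sum_{j\ge1}\nu_j^2 a_j^2\;\le\;\delta^2\|\varphi\|^2.
\end{equation*}

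Next I would split the index set at the block $\{N,\dots,N+m-1\}$. Outside the block (H2) yields $|\nu_j|\ge\gamma$, so $\sum_{j\notin[N,N+m-1]}a_j^2\le\gamma^{-2}\sum\nu_j^2 a_j^2\le(\delta/\gamma)^2\|\varphi\|^2$. Since $\Pi_N\varphi=\sum_{j=N}^{N+m-1}a_j g_j$, this is exactly $\|(\mathbb I-\Pi_N)\varphi\|^2\le(\delta/\gamma)^2\|\varphi\|^2$, which yields (ii); the factor $\sqrt{2}$ I expect to come from a sharper accounting when one measures the norm of $\Pi_N-\mathbb I$ as an operator from $F$ (endowed with the ambient $\mathfrak H$-norm) into $\mathfrak H$, or equivalently from the comparison of the two $m$-dimensional subspaces via the graph of $\Pi_N|_F$. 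Hypothesis (H1), i.e.\ $\delta<\gamma/\sqrt 2$, is precisely what ensures $(\delta/\gamma)^2<1/2$, so that $\Pi_N|_F$ is injective and maps $F$ onto an $m$-dimensional subspace of $\mathrm{span}(g_N,\dots,g_{N+m-1})$.

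For (i) I would use the min--max characterization
\begin{equation*}
\xi_i^F=\min_{\substack{V\subset F\\ \dim V=i}}\max_{\varphi\in V\setminus\{0\}}\frac{q(\varphi,\varphi)}{\|\varphi\|^2},
\end{equation*}
together with the analogous formula for $\nu_{N+i-1}$. For $\varphi\in F$ the expansion yields $q(\varphi,\varphi)=\sum_j\nu_j a_j^2$. The tail contribution is controlled by the elementary inequality $|\nu_j|\le \nu_j^2/\gamma$ valid outside the block, giving $\bigl|\sum_{j\notin[N,N+m-1]}\nu_j a_j^2\bigr|\le(\delta^2/\gamma)\|\varphi\|^2$. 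Combining this with $\|(\mathbb I-\Pi_N)\varphi\|^2\le(\delta/\gamma)^2\|\varphi\|^2$ gives a two-sided comparison between the Rayleigh quotient of $\varphi$ viewed in $F$ and that of $\Pi_N\varphi$ viewed in $\mathrm{span}(g_N,\dots,g_{N+m-1})$, with error $O(\delta^2/\gamma)$. Transferring candidate $i$-dimensional subspaces in both directions through the bijection $\Pi_N|_F$ then produces matching upper and lower bounds and yields $|\xi_i^F-\nu_{N+i-1}|\le 4\delta^2/\gamma$.

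The main obstacle is not conceptual but book-keeping: the constants $\sqrt 2$ in (ii) and $4$ in (i) require a careful tracking of the three independent sources of error, namely the $(\mathbb I-\Pi_N)$-component of $\varphi$ in the denominator $\|\varphi\|^2$, the off-block part of $q(\varphi,\varphi)$ in the numerator, and the distortion introduced when one pushes minimizing $i$-planes from $F$ to $\mathrm{span}(g_N,\dots,g_{N+m-1})$ (or back) via $\Pi_N$. Condition (H1) is used exactly once but crucially, to guarantee that $\Pi_N|_F$ is a linear isomorphism onto its image, so that the min--max comparison is legitimate and no spurious eigenvalues are introduced or lost.
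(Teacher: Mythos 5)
The paper does not prove this proposition: it is imported verbatim from \cite[Proposition B.1]{ALM2022}, and the text explicitly defers to the appendix of that reference for a detailed argument. So there is no in-paper proof to compare your proposal against; I can only assess it on its own merits and against the Colin de Verdi\`ere/Courtois strategy that the paper itself names as the source.

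Your outline is essentially correct and is the standard route. The master estimate
\begin{equation*}
\sum_{j\ge 1}\nu_j^2 a_j^2\le\delta^2\|\varphi\|^2
\end{equation*}
is the right key step, but as written it slightly overreaches: (H3) only controls $q(\varphi,g)$ for $g\in\mathfrak D$, and the ``optimal'' test vector $g=\sum_j\nu_j a_j g_j$ need not lie in $\mathfrak D$. You should replace it by its truncations $g_M=\sum_{j\le M}\nu_j a_j g_j$, which do lie in $\mathfrak D$ because each $g_j$ does; then (H3) gives $\bigl(\sum_{j\le M}\nu_j^2a_j^2\bigr)^{1/2}\le\delta\|\varphi\|$ for every $M$, and letting $M\to\infty$ recovers the master estimate. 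This is a minor rigor gap, easily closed, but it should be said explicitly.

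Once the master estimate is in hand, the rest goes through exactly as you describe: (H2) gives $|\nu_j|\ge\gamma$ off the block $\{N,\dots,N+m-1\}$, so $\|(\mathbb I-\Pi_N)\varphi\|\le(\delta/\gamma)\|\varphi\|$, and (H1) makes $\Pi_N|_F$ injective onto $\mathrm{span}(g_N,\dots,g_{N+m-1})$. Note that your argument actually produces the sharper bound $\|\Pi_N-\mathbb I\|_{\mathcal L(F,\mathfrak H)}\le\delta/\gamma$ (no $\sqrt 2$), and, if you carry out the Rayleigh-quotient book-keeping you sketch for (i), you find that the per-vector error between $q(\varphi,\varphi)/\|\varphi\|^2$ and $q(\Pi_N\varphi,\Pi_N\varphi)/\|\Pi_N\varphi\|^2$ is at most $\gamma\|(\mathbb I-\Pi_N)\varphi\|^2+\gamma^{-1}\delta^2\le 2\delta^2/\gamma$, and the min--max transfer then yields $|\xi_i^F-\nu_{N+i-1}|\le 2\delta^2/\gamma$, again sharper than the stated $4\delta^2/\gamma$. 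So the constants you announce are not the ones your argument delivers, but since yours are stronger this does not affect the validity of the proposition; it merely means the cited statement is not optimal. In summary: the approach is sound and matches the classical one, modulo the truncation remark and completing the arithmetic you currently leave implicit.
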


\subsection{Application to our setting}\label{subsec:application-our}
We are going to apply Proposition \ref{p:appEV} in the following
way. For $a=|a|(\cos\alpha,\sin\alpha)$, with $\alpha\in(-\pi,\pi]$
fixed and $|a|>0$ small enough, we introduce the following set of
definitions \eqref{notfirst}--\eqref{notlast}:
\begin{align}
 &\mathfrak H_a := L^2(\Omega)~\text{with norm }\|{\cdot}\|:=\|{\cdot}\|_{L^2(\Omega)};
 \quad \mathfrak D_a :=\widetilde{\mathcal H}_a;
 \label{notfirst}\\
  \label{eq:def-q-a}&q_a(u,v):= \int_{\Omega\setminus\Gamma_a} \nabla
                      u\cdot\nabla v\dx-
                      \lambda_N^0\int_{\Omega} uv\dx,\quad\text{for every }u,v\in\mathfrak D_a;\\
  &F_{a}:= \Pi_a(E(\lambda_N^0)). \label{notlast}
\end{align}
By construction and the spectral equivalence between problems
\eqref{eq:eige_a} and \eqref{eq:eige_equation_a}, the eigenvalues of
$q_a$ are $\{\lambda_i^a-\lambda_N^0\}_{i\ge1}$. We use the notation
$\nu_i^a:=\lambda_i^a-\lambda_N^0$. If $|a|$ is small enough, Lemma
\ref{l:norm} implies that $\Pi_a$ is injective, so that $\Pi_a$ is
bijective from $E(\lambda_N^0)$ onto $F_a$ and $F_a$ is proved to be a
$m$-dimensional subspace of $\widetilde{\mathcal H}_a$, see
\eqref{eq:subspHtilde}. Since $\lambda_i^a\to\lambda_i^0$ as $a\to0$
for all $i\in\N\setminus\{0\}$ and $\lambda_N^0$ is of multiplicity
$m$, the assumption (H2) in Proposition \ref{p:appEV} is fulfilled for
$|a|>0$ small enough if we take, for instance,
\begin{equation*}
	\gamma:={\frac12}\min\{\lambda_N^0-\lambda_{N-1}^0,\lambda_{N+m}^0-\lambda_{N+m-1}^0\}
\end{equation*}
when $N\ge2$, whereas $\gamma:={ \frac12}
	\left(\lambda_{1+m}^0-\lambda_{1}^0\right)$ when $N=1$.

        It remains to check whether condition (H3) in Proposition
        \ref{p:appEV} is satisfied. Let us choose $v\in F_a$ and
        $w\in \widetilde{\mathcal H}_a$. Since $\Pi_a$ is injective by
        Lemma \ref{l:norm}, there exists a unique $u\in E(\lambda_N)$
        such that $v=\Pi_a u$. Hence, we have
\begin{align}\label{eq:biH31}
  q_a&(v, w) =q_a(G_\alpha(u)-U_a^u, w) 
       = \int_{\Omega\setminus\Gamma_a} \nabla (G_\alpha(u)-U_a^u)
       \cdot \nabla w\,dx
       - \lambda_N^0 \int_{\Omega} (G_\alpha(u)-U_a^u)w\,dx\\
  \notag&=\int_{\Omega\setminus\Gamma_a} \nabla G_\alpha(u) \cdot
          \nabla w\,dx
          - \lambda_N^0 \int_{\Omega} G_\alpha(u)w\,dx-
          \int_{\Omega\setminus\Gamma_a} \nabla U_a^u \cdot \nabla w\,dx  + \lambda_N^0 \int_{\Omega} U_a^uw\,dx.
\end{align}
On one hand, we have
\begin{equation}\label{eq:firstH3}
  \int_{\Omega\setminus\Gamma_a} \nabla G_\alpha(u) \cdot \nabla w\,dx -
  \lambda_N^0 \int_{\Omega} G_\alpha(u)w\,dx =
  -2 \int_{S_a} (\nabla G_\alpha(u)\cdot \nu_\alpha) \gamma_+^\alpha(w)\,dS.
\end{equation}
In order to prove \eqref{eq:firstH3}, we test \eqref{eq:eige_0}
(which is satisfied by $G_\alpha(u)$ with $\lambda=\lambda_N^0$)
with $w$, and integrate it by parts over
$\Omega\setminus\Gamma_a$. Denoting $\nu_{ext}$ as the external normal
vector with $|\nu_{ext}|=1$, we obtain
\begin{align*}
  \lambda_N^0\int_{\Omega\setminus\Gamma_a} G_\alpha(u)w\,dx& = \int_{\Omega\setminus\Gamma_a}\nabla G_\alpha(u)\cdot \nabla w\,dx - \int_{\partial(\Omega\setminus\Gamma_a)} (\nabla G_\alpha(u)\cdot \nu_{ext})w\,dS \\
                                                            &= \int_{\Omega\setminus\Gamma_a}\nabla G_\alpha(u)\cdot \nabla w\,dx + \int_{\Gamma_0^\alpha} \gamma_+^\alpha (\nabla G_\alpha(u)\cdot \nu_\alpha)\left( 
                                                              \gamma_+^\alpha(w) + \gamma_-^\alpha(w) \right)\,dS \\
                                                            &\quad + \int_{S_a} (\nabla G_\alpha(u)\cdot \nu_\alpha) \left( 
                                                              \gamma_+^\alpha(w) - \gamma_-^\alpha(w) \right)\,dS \\
                                                            &= \int_{\Omega\setminus\Gamma_a}\nabla G_\alpha(u)\cdot \nabla w\,dx  + 2\int_{S_a} (\nabla G_\alpha(u)\cdot \nu_\alpha) \gamma_+^\alpha(w)\,dS
\end{align*}
since $w\in \widetilde{\mathcal H}_a$.  Thus, \eqref{eq:firstH3} is
proved. In view of \eqref{eq:biH31}, \eqref{eq:weakvau}, and
\eqref{eq:firstH3} we reach
\begin{align*}
|q_a(G_\alpha(u)-U_\alpha^u, w)|& = \left|\lambda_N^0\int_{\Omega} U_a^uw\,dx\right| \\
&\le \lambda_N^0\|U_a^u\|\|w\| \le \lambda_N^0 M_a \|G_\alpha(u)\|\|w\|\le\lambda_N^0\frac{M_a}{1-M_a}\|v\|\|w\|,
\end{align*}
because
$\|U_a^u\| = \|G_\alpha(u)-(G_\alpha(u)-U_a^u)\|= \| (G_\alpha -
\Pi_a)u\|\leq M_a\|u\|_{L^2_K(\Omega)}=M_a\|G_\alpha(u)\|$ and
$\|G_\alpha(u)\|=\|(G_\alpha - \Pi_a) u + \Pi_a u\| \le
M_a\|G_\alpha(u)\| + \|v\|$, so that
\[
 \|G_\alpha(u)\| \le \dfrac{\|v\|}{1-M_a}.
\]
Lemma \ref{l:norm} then implies
\begin{equation*}
	|q_a(v,w)|\leq \delta_a\|v\|\|w\|,
\end{equation*}
for some $\delta_a>0$ such that $\delta_a=o(\chi_a)$ as $|a|\to 0$.
We can now apply Proposition \ref{p:appEV} with $\delta=\delta_a$,
which implies that, for every $1\le i\le m$,
\begin{equation}\label{eq:asympt_proof}
	\lambda_{N+i-1}^a=\lambda_N^0+\xi_i^a +o(\chi_a^2)\quad\text{as }|a|\to0,
\end{equation}
where $\{ \xi_i^a \}_{i=1}^m$  are the eigenvalues (in ascending order)
of the restriction of $q_a$ to $F_a$.

\subsection{Analysis of the restricted quadratic form.}

In order to study $\{ \xi_i^a \}_{i=1}^m$, we investigate how the
bilinear form $q_a$ acts when it is restricted to the $m$-dimensional
subspace $F_a= \Pi_a(E(\lambda_N^0))$, still endowed with the
$L^2(\Omega)$-norm.

Let $r_a$ be  the bilinear form on $E(\lambda_N^0)$ defined in \eqref{eq:def-r_a}.

\begin{lemma} \label{l:restrict}
For every $\varphi,\psi\in E(\lambda_N^0)$, 
\begin{align*}
q_a\left(\Pi_a \varphi,\Pi_a \psi\right) = r_a(\varphi,\psi).
\end{align*}
\end{lemma}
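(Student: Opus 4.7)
The plan is to expand $q_a(\Pi_a\varphi,\Pi_a\psi)$ by bilinearity and to rewrite each of the four pieces using the weak equations satisfied by $G_\alpha(\varphi), G_\alpha(\psi)$ (as solutions of $(P_0^\alpha)$ with eigenvalue $\lambda_N^0$) and by $U_a^\varphi, U_a^\psi$ (equation \eqref{eq:weakvau}). All three ingredients we need are already in the excerpt:
\begin{itemize}
\item identity \eqref{eq:firstH3}, which is the weak formulation of $(P_0^\alpha)$ tested against $w\in\widetilde{\mathcal H}_a$;
\item the variational equation \eqref{eq:weakvau} for $U_a^u$;
\item the containment $\Pi_a\varphi,\Pi_a\psi\in\widetilde{\mathcal H}_a$ from \eqref{eq:subspHtilde}, together with the fact that $U_a^u-G_\alpha(u)\in\widetilde{\mathcal H}_a$.
\end{itemize}

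First, I would establish the preliminary identity $q_a(G_\alpha(\varphi),G_\alpha(\psi))=0$. Since both $G_\alpha(\varphi)$ and $G_\alpha(\psi)$ solve $(P_0^\alpha)$, I split $\Omega\setminus\Gamma_a=(\Omega\cap\pi_\alpha^+)\cup(\Omega\cap\pi_\alpha^-)$ and integrate by parts on each half. On each half the PDE $-\Delta G_\alpha(\varphi)=\lambda_N^0 G_\alpha(\varphi)$ cancels the $L^2$-term, and the boundary integrals on $\Omega\cap\Sigma_\alpha$ split into contributions over $\Gamma_0^\alpha$ and over its continuation (which contains $S_a$). Over $\Gamma_0^\alpha$ the two transmission conditions in $(P_0^\alpha)$ produce the cancellation $(-b_1)(-a_1)-b_1 a_1=0$, while over $\Sigma_\alpha\setminus\Gamma_0^\alpha$ both $G_\alpha(\psi)$ and $\nabla G_\alpha(\varphi)$ are continuous across $\Sigma_\alpha$, so the one-sided traces agree and the contributions cancel as well.

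Next, for the cross terms $q_a(G_\alpha(\varphi),U_a^\psi)$, I write $U_a^\psi=(U_a^\psi-G_\alpha(\psi))+G_\alpha(\psi)$; the second summand contributes $0$ by the preliminary identity, and the first summand is a legitimate element of $\widetilde{\mathcal H}_a$, so I can feed it into \eqref{eq:firstH3} with $w=U_a^\psi-G_\alpha(\psi)$, obtaining exactly
$-2\!\int_{S_a}(\nabla G_\alpha(\varphi)\cdot\nu_\alpha)\big[\gamma_+^\alpha(U_a^\psi)-G_\alpha(\psi)\big]dS$,
which matches (minus) the first two $S_a$-integrals in \eqref{eq:def-r_a}. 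By symmetry the same manipulation applied to $q_a(U_a^\varphi,G_\alpha(\psi))$ yields the third and fourth $S_a$-integrals of \eqref{eq:def-r_a}. Finally, for $q_a(U_a^\varphi,U_a^\psi)$ the only contribution that does not arise from simple regrouping is the pairing $\int\nabla U_a^\varphi\cdot\nabla G_\alpha(\psi)$; I would reduce it by the identity already used (applying \eqref{eq:firstH3} with $w=U_a^\varphi-G_\alpha(\varphi)\in\widetilde{\mathcal H}_a$ and invoking the preliminary identity once more to kill the $G_\alpha(\varphi)G_\alpha(\psi)$ and $\nabla G_\alpha(\varphi)\cdot\nabla G_\alpha(\psi)$ pieces).

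Assembling the four contributions, the bulk term $\int_{\Omega\setminus\Gamma_a}\nabla U_a^\varphi\cdot\nabla U_a^\psi-\lambda_N^0\int_\Omega U_a^\varphi U_a^\psi$ survives and the four remaining $S_a$-integrals are exactly those in \eqref{eq:def-r_a}, so the result follows. The main obstacle is purely bookkeeping: ensuring at every step that whatever function is inserted as a test function actually belongs to $\widetilde{\mathcal H}_a$ (so that the $\gamma_+^\alpha+\gamma_-^\alpha=0$ condition on $\Gamma_a$ holds), keeping straight the outward-normal sign convention on $\pi_\alpha^\pm$ so that the transmission terms on $\Gamma_0^\alpha$ cancel, and exploiting that $G_\alpha(u)$ is smooth across $S_a$ (so $\gamma_+^\alpha(G_\alpha(u))=G_\alpha(u)$ there) to turn the jump brackets into single boundary values.
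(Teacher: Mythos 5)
Your proposal is correct and follows essentially the same route as the paper: expand $q_a(\Pi_a\varphi,\Pi_a\psi)$ by bilinearity, observe that $q_a(G_\alpha(\varphi),G_\alpha(\psi))=0$ because both are eigenfunctions of $(P_0^\alpha)$ (the paper obtains this simply by testing the weak form of $(P_0^\alpha)$ with $G_\alpha(\psi)\in\widetilde{\mathcal H}_0^\alpha$, which is a cleaner way to see your integration-by-parts cancellation), handle the two cross terms via the identity \eqref{eq:firstH3}, and let $q_a(U_a^\varphi,U_a^\psi)$ survive as the bulk term. Your device of splitting $U_a^\psi=(U_a^\psi-G_\alpha(\psi))+G_\alpha(\psi)$ and inserting the first piece into \eqref{eq:firstH3} is a neat shortcut equivalent to the paper's derivation of \eqref{eq:torestricted2}. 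One slip: your penultimate sentence claims that $q_a(U_a^\varphi,U_a^\psi)$ contains a pairing $\int\nabla U_a^\varphi\cdot\nabla G_\alpha(\psi)$ needing further reduction, which is false — this term is exactly $\int_{\Omega\setminus\Gamma_a}\nabla U_a^\varphi\cdot\nabla U_a^\psi\,dx-\lambda_N^0\int_\Omega U_a^\varphi U_a^\psi\,dx$ and requires no further manipulation, as your own final assembly correctly states.
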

\begin{proof}
     We have
    \begin{align}\label{eq:torestricted1}
      q_a&\left(\Pi_a \varphi,\Pi_a \psi\right)=  q_a(G_\alpha(\varphi)-U^\varphi_a,
           G_\alpha(\psi)-U^\psi_a) \\
         &        = \int_{\Omega\setminus\Gamma_a} \nabla (G_\alpha(\varphi) - U_a^\varphi)\cdot\nabla (G_\alpha(\psi) - U_a^\psi)\,dx - \lambda_N^0\int_\Omega
           (G_\alpha(\varphi) - U_a^\varphi)(G_\alpha(\psi) - U_a^\psi)\,dx\notag\\
         &= \int_{\Omega\setminus\Gamma_a}\nabla G_\alpha(\varphi)\cdot\nabla G_\alpha(\psi)\,dx - \int_{\Omega\setminus\Gamma_a}\nabla G_\alpha(\psi)\cdot\nabla U^\varphi_a\,dx - \int_{\Omega\setminus\Gamma_a}\nabla G_\alpha(\varphi)\cdot\nabla U^\psi_a\,dx \notag\\
         &\quad+  \int_{\Omega\setminus\Gamma_a}\nabla U^\varphi_a\cdot\nabla U^\psi_a\,dx - \lambda_N^0\int_\Omega G_\alpha(\varphi)G_\alpha(\psi)\,dx + \lambda_N^0\int_\Omega G_\alpha(\varphi) U^\psi_a\,dx \notag\\
         &\quad + \lambda_N^0\int_\Omega G_\alpha(\psi)U^\varphi_a\,dx - \lambda_N^0\int_\Omega U^\varphi_a U^\psi_a\,dx  \notag\\
      \notag &=- \int_{\Omega\setminus\Gamma_a}\nabla G_\alpha(\psi)\cdot\nabla U^\varphi_a\,dx - \int_{\Omega\setminus\Gamma_a}\nabla G_\alpha(\varphi)\cdot\nabla U^\psi_a\,dx +  \int_{\Omega\setminus\Gamma_a}\nabla U^\varphi_a\cdot\nabla U^\psi_a\,dx\notag\\
      \notag &\quad - \lambda_N^0\int_\Omega U^\varphi_a U^\psi_a\,dx   + \lambda_N^0\int_\Omega G_\alpha(\varphi) U^\psi_a\,dx  + \lambda_N^0\int_\Omega G_\alpha(\psi)U^\varphi_a\,dx, 
    \end{align}
    because $G_\alpha(\varphi)$ is an eigenfunction of
    \eqref{eq:eige_0} relative to $\lambda_N^0$. On the other hand, we
    have
\begin{multline}\label{eq:torestricted2}
  \int_{\Omega\setminus\Gamma_a} \nabla G_\alpha(\varphi)\cdot\nabla
  U^\psi_a\,dx - \lambda_N^0\int_\Omega G_\alpha(\varphi) U^\psi_a\,dx
  \\= -2 \int_{S_a} (\nabla G_\alpha(\varphi)\cdot
  \nu_\alpha)\gamma_+^\alpha(U^\psi_a)\,dS + 2 \int_{S_a} (\nabla
  G_\alpha(\varphi)\cdot \nu_\alpha)G_\alpha(\psi)\,dS.
\end{multline}
In order to prove \eqref{eq:torestricted2}, we consider
\eqref{eq:eige_0} multiplied by $U^\psi_a$ and we integrate by parts
over $\Omega\setminus\Gamma_a$. We follow the same computation as in
the proof of \eqref{eq:firstH3}, taking into account that
$U^\psi_a-G_\alpha(\psi)\in \widetilde{\mathcal H}_a$, and hence
$\gamma_+^\alpha(U^\psi_a)+\gamma_-^\alpha(U^\psi_a)=2G_\alpha(\psi)$
on $S_a$.  In a similar way,
\begin{multline}\label{eq:torestricted2-bis}
  \int_{\Omega\setminus\Gamma_a} \nabla G_\alpha(\psi)\cdot\nabla
  U^\varphi_a\,dx - \lambda_N^0\int_\Omega G_\alpha(\psi)
  U^\varphi_a\,dx \\= -2 \int_{S_a} (\nabla G_\alpha(\psi)\cdot
  \nu_\alpha)\gamma_+^\alpha(U^\varphi_a)\,dS + 2 \int_{S_a} (\nabla
  G_\alpha(\psi)\cdot \nu_\alpha)G_\alpha(\varphi)\,dS.
\end{multline}
Combining \eqref{eq:torestricted1}, \eqref{eq:torestricted2}, and
\eqref{eq:torestricted2-bis} we finally obtain
\begin{align*}
  q_a\left(\Pi_a \varphi,\Pi_a \psi\right)
  &= 2 \int_{S_a} (\nabla G_\alpha(\varphi)\cdot    \nu_\alpha)\gamma_+^\alpha(U^\psi_a)\,dS -2 \int_{S_a} (\nabla G_\alpha(\varphi)\cdot    \nu_\alpha)G_\alpha(\psi)\,dS\\
  &\quad + 2 \int_{S_a} (\nabla G_\alpha(\psi)\cdot    \nu_\alpha)\gamma^\alpha_+(U^\varphi_a)\,dS -2 \int_{S_a} (\nabla G_\alpha(\psi)\cdot    \nu_\alpha)G_\alpha(\varphi)\,dS \\
  &\quad+  \int_{\Omega\setminus\Gamma_a}\nabla U^\varphi_a\cdot\nabla U^\psi_a\,dx - \lambda_N^0\int_\Omega  U^\varphi_a U^\psi_a\,dx, 
\end{align*}
and the proof is completed.
\end{proof}

\begin{remark}\label{r:entries-r-a}
  From Lemma \ref{l:error}, Lemma \ref{l:prop4.6FNOS},
  and\cite[Propositions 3.7. and 4.1]{FelliNorisOgnibeneSiclari2023}
  it follows that, for every $u,w\in E(\lambda_N^0)$ such that
  $\|u\|_{L^2(\Omega,\C)}=\|w\|_{L^2(\Omega,\C)}=1$, we have
  $r_a(u,w)=o(\chi_a)$ as $|a|\to0$.
\end{remark}

\begin{lemma}\label{l:mu-xi}
  Let $\{\xi_j^a\}_{j=1}^m$ and $\{\mu_j^a\}_{j=1}^m$ be the
  eigenvalues (in ascending order) of the restriction of the form
  $q_a$ defined in \eqref{eq:def-q-a} to the space $F_a$ in
  \eqref{notlast} and of the form $r_a$ on
  $E(\lambda_N^0)\times E(\lambda_N^0)$ introduced in
  \eqref{eq:def-r_a}, respectively. Then, for every
  $j\in\{1,\dots,m\}$,
\begin{equation}\label{eq:exp-mu-xi}
    \xi_j^a = \mu_j^a + o(\chi_a^2) \qquad \text{as }|a|\to0,
\end{equation}
where $\chi_a$ is defined in \eqref{eq:chiEps}.
\end{lemma}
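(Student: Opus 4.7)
The plan is to diagonalize both quadratic forms in a common basis induced by $\Pi_a$, and to exploit the fact that $\Pi_a$ is nearly an $L^2$-isometry (by Lemma \ref{l:norm}), so that the Gram matrix of $F_a$ in the $L^2(\Omega)$-inner product is close to the identity. The key identity of Lemma \ref{l:restrict} identifies the bilinear form $r_a$ on $E(\lambda_N^0)$ with the restriction of $q_a$ to $F_a$ via the isomorphism $\Pi_a$, so the only mismatch between $\{\mu_j^a\}$ and $\{\xi_j^a\}$ comes from the inner product used to normalize eigenvectors on the two sides.

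Concretely, I would fix an $L^2_K(\Omega)$-orthonormal basis $\{u_1,\dots,u_m\}$ of $E(\lambda_N^0)$; since $G_\alpha$ is a pointwise isometry, $\{G_\alpha(u_i)\}_{i=1}^m$ is orthonormal in $L^2(\Omega)$. Define the $m\times m$ matrices
\[
M_a=\bigl(r_a(u_i,u_j)\bigr)_{i,j=1}^m,\qquad
S_a=\bigl((\Pi_a u_i,\Pi_a u_j)_{L^2(\Omega)}\bigr)_{i,j=1}^m.
\]
By Lemma \ref{l:restrict} one has equivalently $M_a=(q_a(\Pi_a u_i,\Pi_a u_j))_{i,j}$. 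Hence $\{\mu_j^a\}$ are the eigenvalues of $M_a$, whereas $\{\xi_j^a\}$ are the generalized eigenvalues of the pencil $(M_a,S_a)$, i.e.\ the eigenvalues of the symmetric matrix $S_a^{-1/2}M_aS_a^{-1/2}$.

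Next I would control $S_a-I_m$ and $M_a$ separately. Expanding $\Pi_a u_i=G_\alpha(u_i)-U_a^{u_i}$ and using orthonormality of $\{G_\alpha(u_i)\}$, the entry $S_a^{ij}$ equals $\delta_{ij}$ plus three corrections involving $L^2$-inner products with $U_a^{u_i}$ or $U_a^{u_j}$. Cauchy–Schwarz then bounds them in terms of $\|U_a^{u_i}\|_{L^2(\Omega)}$; by Lemma \ref{l:prop4.6FNOS} together with the definition \eqref{eq:chiEps} of $\chi_a$ (which ensures $\|U_a^{u_i}\|_{\mathcal H_a}\le\chi_a$ for normalized $u_i$), each such $L^2$-norm is $o(\chi_a)$ as $|a|\to0$. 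Hence $S_a=I_m+E_a$ with $\|E_a\|=o(\chi_a)$, so that $S_a^{-1/2}=I_m+o(\chi_a)$ for $|a|$ small. Concurrently, Remark \ref{r:entries-r-a} yields $\|M_a\|=o(\chi_a)$.

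Combining the two bounds,
\[
S_a^{-1/2}M_aS_a^{-1/2}-M_a=(S_a^{-1/2}-I_m)M_aS_a^{-1/2}+M_a(S_a^{-1/2}-I_m)=o(\chi_a^2),
\]
since $\|M_a\|=o(\chi_a)$, $\|S_a^{-1/2}-I_m\|=o(\chi_a)$ and $\|S_a^{-1/2}\|=O(1)$. Lipschitz continuity of the eigenvalues of symmetric matrices with respect to the operator norm then gives $\xi_j^a-\mu_j^a=o(\chi_a^2)$ for every $j$, which is \eqref{eq:exp-mu-xi}. The only subtle point in this plan is the bookkeeping that pairs the $o(\chi_a)$ perturbation of the Gram matrix against the $o(\chi_a)$ smallness of $r_a$ on normalized vectors: both factors vanish at the same rate $\chi_a$, so their product sharpens the comparison from the naive $o(\chi_a)$ to the $o(\chi_a^2)$ required to upgrade Theorem \ref{thm:approxEVs} into an asymptotic description of the multiple eigenbranches.
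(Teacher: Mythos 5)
Your proof is correct and follows the paper's strategy: both represent $\{\mu_j^a\}$ as eigenvalues of the matrix $\bigl(r_a(u_i,u_j)\bigr)_{ij}$ and $\{\xi_j^a\}$ via the generalized eigenvalue problem with the Gram matrix of $\{\Pi_a u_i\}$, then combine the $o(\chi_a)$ smallness of the entries of $r_a$ (Remark \ref{r:entries-r-a}) with the $o(\chi_a)$ deviation of the Gram matrix from the identity (Lemma \ref{l:prop4.6FNOS}). The only cosmetic difference is that you pass to the symmetric conjugate $S_a^{-1/2}M_aS_a^{-1/2}$ and cite Lipschitz continuity of symmetric eigenvalues, while the paper keeps the similar matrix $\mathsf{C}_a^{-1}\mathsf{R}_a$ and invokes the min-max characterization.
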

\begin{proof}
  Let $\{\varphi_1,\dots,\varphi_m\}$ be an orthonormal basis of
  $E(\lambda_N^0)$. Then the eigenvalues $\{\mu_j^a\}_{j=1}^m$
  coincide with the eigenvalues of the $m\times m$ symmetric real
  matrix
    \begin{equation*}
        \mathsf{R}_a=\left(r_a(\varphi_j,\varphi_k)\right)_{jk}.
    \end{equation*}
    In view of Remark \ref{r:entries-r-a},
    \begin{equation}\label{eq:mu-xi-1}
        r_a(\varphi_j,\varphi_k)=o(\chi_a)\quad\text{as }|a|\to0,
    \quad \text{for all }j,k\in\{1,\dots,m\}.
    \end{equation} 
    We observe that $\{\Pi_a\varphi_i\}_{i=1,\ldots,m}$ is a basis of
    $F_a=\Pi_a(E(\lambda_N^0))$, but $\Pi_a\varphi_i$ are not
    necessarily orthogonal to each other in $L^2(\Omega)$. By Lemma
    \ref{l:restrict} and direct calculations, it follows that the
    eigenvalues $\{\xi_j^a\}_{j=1}^m$ of $q_a\big|_{F_a}$ coincide
    with the eigenvalues of the matrix
    \begin{equation*}
\mathsf{B}_a=\mathsf{C}_a^{-1}\mathsf{R}_a,
    \end{equation*}
    where $\mathsf{C}_a$ is the $m\times m$ symmetric real matrix 
    \begin{equation*}
\mathsf{C}_a=\Big((\Pi_a\varphi_j,\Pi_a\varphi_k)_{L^2(\Omega)}\Big)_{jk}.
    \end{equation*}
By Lemma \ref{l:prop4.6FNOS} and \eqref{eq:chiEps} we deduce that
\begin{equation*}
    \mathsf{C}_a=\mathbb{I}+o(\chi_a)\quad\text{as }|a|\to0,
\end{equation*}
with $\mathbb{I}$ being the identity $m\times m$ matrix, in the sense
that all the entries of the matrix $\mathsf{C}_a-\mathbb{I}$ are
$o(\chi_a)$ as $|a|\to0$.  Hence
$\mathsf{C}_a^{-1}=\mathbb{I}+o(\chi_a)$ as $|a|\to0$, so that
\eqref{eq:mu-xi-1} yields
\begin{equation}\label{eq:mu-xi-2}
    \mathsf{B}_a=(\mathbb{I}+o(\chi_a))\mathsf{R}_a=\mathsf{R}_a+o(\chi_a^2)\quad\text{as }|a|\to0.
\end{equation}
The expansion \eqref{eq:exp-mu-xi} follows from \eqref{eq:mu-xi-2} and
the min-max characterization of eigenvalues.
\end{proof}

Therefore, in view of \eqref{eq:asympt_proof} and Lemma \ref{l:mu-xi},
the proof of Theorem \ref{thm:approxEVs} is complete.

\section{Study of the function \texorpdfstring{$\mathcal C(\alpha,u)$}{Calphau}} 
\label{sec:functionC}

In the present section, we study the properties of the function
$\mathcal C(\alpha,u)$ defined in \eqref{d:functionC}, which plays a
crucial role in the asymptotic expansion of eigenbranches, as emerged
in Section \ref{sec:appLemma}.

From \eqref{eq:psi} it follows that, for every
$u\in E(\lambda_N^0)\setminus\{0\}$ and $\alpha\in(-\pi,\pi]$,
    \begin{equation*}
\Psi_\alpha^u(r\cos t,r\sin t)= \beta(u) f_\alpha(t)
F_u(r\cos t,r\sin t)
\end{equation*}
where $\beta(u)\neq0$ and 
 \begin{equation*}
 F_u(r\cos t,r\sin t)=r^{\frac {k(u)}2}\sin\Big(\tfrac{k(u)}{2} (t-\omega(u))\Big).
\end{equation*} 
Furthermore, the restrictions of $\Psi^u_\alpha$ and
$\nabla \Psi^u_\alpha\cdot \nu_\alpha$ on ${S^\alpha_1}$ are given
respectively by
\begin{equation*}
   \Psi^u_\alpha(r\cos\alpha,r\sin\alpha)=
   \beta(u) f_\alpha(\alpha)
   \sin\left(\tfrac{k(u)}{2} (\alpha-\omega(u))\right)r^{\frac {k(u)}2}\quad\text{for all }r\in [0,1],
\end{equation*}
and
\begin{equation*}
  \nabla \Psi^u_\alpha(r\cos\alpha,r\sin\alpha)\cdot \nu_\alpha=
  \tfrac {k(u)}2\, \beta(u) f_\alpha(\alpha)
  \cos\left(\tfrac{k(u)}{2} (\alpha-\omega(u))\right)r^{\frac {k(u)}2-1}\quad\text{for all }r\in [0,1].
\end{equation*}
Therefore, 
from \eqref{d:functionC},  \eqref{d:Jtilde}, \eqref{d:Ltilde}, 
and \eqref{d:Etilde}, for every 
$u\in E(\lambda_N^0)\setminus\{0\}$ and $\alpha\in (-\pi,\pi]$ we have that
\begin{align*}
    &\mathcal C(\alpha,u)\\
    &=2
     \min\left\{ 
     \begin{array}{rr}
    &\hskip-15pt\dfrac12{\displaystyle{\int_{\R^2\setminus\Gamma^\alpha_1}\!\!|\nabla w|^2dx}}\!+\!
     k(u)\beta(u) f_\alpha(\alpha)
   \cos\left(\tfrac{k(u)}{2} (\alpha-\omega(u))\right)
   \!\!{\displaystyle{\int_{S^\alpha_1}\!\!
      |x|^{\frac {k(u)}2-1}
     \gamma^\alpha_+(w-\Psi_\alpha^u)dS}}:\\[10pt]
     &w\in \widetilde X_\alpha\text{ and } w- \eta \Psi_\alpha^u \in\widetilde{\mathcal H}_\alpha
     \end{array}\hskip-5pt\right\}
     \\[10pt]
     &=2
     \min\left\{ \hskip-10pt
     \begin{array}{rr}
       &\hskip-5pt\dfrac12{\displaystyle{\int_{\R^2\setminus\Gamma^\alpha_1}|\nabla w|^2\,dx}}+
         k(u)\beta(u) f_\alpha(\alpha)
         \cos\left(\tfrac{k(u)}{2} (\alpha-\omega(u))\right)
         {\displaystyle{\int_{S^\alpha_1}
         |x|^{\frac {k(u)}2-1}
         \gamma^\alpha_+(w)\,dS}}
       \\[10pt]
       &-(\beta(u))^2 \cos\left(\tfrac{k(u)}{2} (\alpha-\omega(u))\right)\sin\left(\tfrac{k(u)}{2} (\alpha-\omega(u))\right):\\[10pt]
       &w\in \widetilde X_\alpha\text{ and } 
         \gamma^\alpha_+(w)+\gamma^\alpha_-(w)=2\beta(u)f_\alpha(\alpha)
         \sin\left(\tfrac{k(u)}{2} (\alpha-\omega(u))\right)|x|^{\frac {k(u)}2}\text{ on }S^\alpha_1
     \end{array}
     \right\}.
\end{align*}
Replacing $w$ with $\beta(u)f_\alpha(\alpha)v$ in the functional to be minimized, we obtain 
\begin{equation*}
    \mathcal C(\alpha,u)=2(\beta(u))^2
     \min\left\{ \hskip-5pt
     \begin{array}{rr}
       &\hskip-10pt\dfrac12{\displaystyle{\int_{\R^2\setminus\Gamma^\alpha_1}|\nabla v|^2\,dx}}+
         k(u)   \cos\left(\tfrac{k(u)}{2} (\alpha-\omega(u))\right)
         {\displaystyle{\int_{S^\alpha_1}
         |x|^{\frac {k(u)}2-1}
         \gamma^\alpha_+(v)\,dS}}
       \\[10pt]
       &- \cos\left(\tfrac{k(u)}{2} (\alpha-\omega(u))\right)\sin\left(\tfrac{k(u)}{2} (\alpha-\omega(u))\right):\\[10pt]
       &v\in \widetilde X_\alpha\text{ and } 
         \gamma^\alpha_+(v)+\gamma^\alpha_-(v)=2
         \sin\left(\tfrac{k(u)}{2} (\alpha-\omega(u))\right)|x|^{\frac {k(u)}2}\text{ on }S^\alpha_1
     \end{array}
     \right\}.
\end{equation*}
By rotation, we can then rewrite $\mathcal C(\alpha,u)$ as 
\begin{align}\label{eq:cara-C}
    &\mathcal C(\alpha,u)\\
    \notag&=2(\beta(u))^2
     \min\left\{ \hskip-5pt
     \begin{array}{rr}
       &\hskip-10pt\dfrac12{\displaystyle{\int_{\R^2\setminus\Gamma^0_1}|\nabla v|^2\,dx}}+
         k(u)   \cos\left(\tfrac{k(u)}{2} (\alpha-\omega(u))\right)
         {\displaystyle{\int_{S^0_1}
         |x|^{\frac {k(u)}2-1}
         \gamma^0_+(v)\,dS}}
       \\[10pt]
       &- \cos\left(\tfrac{k(u)}{2} (\alpha-\omega(u))\right)\sin\left(\tfrac{k(u)}{2} (\alpha-\omega(u))\right):\\[10pt]
       &v\in \widetilde X_0\text{ and } 
         \gamma^0_+(v)+\gamma^0_-(v)=2
         \sin\left(\tfrac{k(u)}{2} (\alpha-\omega(u))\right)|x|^{\frac {k(u)}2}\text{ on }S^0_1
     \end{array}
     \right\}\\[10pt]
\notag     &=2(\beta(u))^2 G_{k(u)}\left(\tfrac{k(u)}{2} (\alpha-\omega(u))\right),
\end{align}
where, for every $\zeta\in\R$ and $k\in\N$ odd,
\begin{equation}\label{eq:defG}
    G_k(\zeta)=\min\left\{ \hskip-5pt
     \begin{array}{rr}
    &\hskip-10pt\dfrac12{\displaystyle{\int_{\R^2\setminus\Gamma^0_1}|\nabla v|^2\,dx}}+
     k   \cos\zeta
   {\displaystyle{\int_{S^0_1}
      |x|^{\frac k2-1}
     \gamma^0_+(v)\,dS}}- \cos\zeta\sin\zeta:\\[10pt]
     &v\in \widetilde X_0\text{ and } 
     \gamma^0_+(v)+\gamma^0_-(v)=2
     |x|^{\frac k2}\sin\zeta \text{ on }S^0_1
     \end{array}
     \right\}.
\end{equation}
Some properties of the function $G$ are described in the following lemma.
\begin{lemma}\label{l:prop-G}
    For some fixed $k\in\N$ odd, let $G=G_k$ be the function defined in \eqref{eq:defG}. Then
    \begin{enumerate}[\rm (i)]
        \item $G\in C^0(\R)$;
        \item $G(0)<0$ and $G(\frac\pi2)>0$;
        \item $G(\zeta+\pi)=G(\zeta)$ for every $\zeta\in\R$;
        \item $G(\pi-\zeta)=G(\zeta)$ for every $\zeta\in\R$;
        \item $G$ is strictly increasing in $(\frac\pi4,\frac\pi2)$;
        \item $G$ is strictly increasing in every interval $I\subset (0,\frac\pi2)$ such that $G> 0$ in $I$. 
    \end{enumerate}
\end{lemma}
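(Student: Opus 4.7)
The plan is to show that $G_k$ is a quadratic form in $(\sin\zeta,\cos\zeta)$ and then to pin it down using two explicit symmetries; this gives all six properties at once. First, I would fix a reference function $\Phi_0\in \widetilde X_0$ with $\gamma^0_+(\Phi_0)+\gamma^0_-(\Phi_0)=2|x|^{k/2}$ on $S^0_1$ and $=0$ on $\Gamma^0_0$ (for instance a suitable cut-off of a harmonic-type extension). Any admissible $v$ in the definition of $G_k(\zeta)$ decomposes uniquely as $v=\sin\zeta\,\Phi_0+u$ with $u\in\widetilde{\mathcal H}_0$, and the functional becomes a quadratic form in $u$ over the fixed Hilbert space $\widetilde{\mathcal H}_0$ with coefficients affine in $(\sin\zeta,\cos\zeta)$. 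Solving the resulting linear Euler--Lagrange problem yields a unique minimizer of the form $u_\zeta=\sin\zeta\,u_1+\cos\zeta\,u_2$ for fixed $u_1,u_2\in\widetilde{\mathcal H}_0$, and substituting back gives
\begin{equation*}
G_k(\zeta)=A\sin^2\zeta+B\sin\zeta\cos\zeta+C\cos^2\zeta
\end{equation*}
for real constants $A,B,C$ depending only on $k$; continuity (i) is then immediate.

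For (ii), at $\zeta=\pi/2$ the functional reduces to $\tfrac12\int|\nabla v|^2\,dx$ over $v$ with the non-trivial jump $\gamma^0_++\gamma^0_-=2|x|^{k/2}$ on $S^0_1$, so the minimum is strictly positive; at $\zeta=0$, a test function such as $v_0(r\cos t,r\sin t)=\chi(r)(t-\pi)$ with $\chi\in C^\infty_{\rm c}((0,1))$ nonnegative and $t\in(0,2\pi)$ lies in $\widetilde{\mathcal H}_0$ and satisfies $\int_{S^0_1}|x|^{k/2-1}\gamma^0_+(v_0)\,dS<0$, so evaluating on $tv_0$ with small $t>0$ makes the linear term dominate and yields $G_k(0)<0$. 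For (iii), the substitution $v\mapsto -v$ bijects admissible sets and preserves the functional between parameters $\zeta+\pi$ and $\zeta$. For (iv), under the reflection $R(x_1,x_2)=(x_1,-x_2)$ one has $\|\nabla(v\circ R)\|_{L^2}=\|\nabla v\|_{L^2}$ and $\gamma^0_\pm(v\circ R)=\gamma^0_\mp(v)$; combining this with $\gamma^0_-(v)=2|x|^{k/2}\sin\zeta-\gamma^0_+(v)$ on $S^0_1$ and the key identity $\int_{S^0_1}|x|^{k-1}\,dS=1/k$, a direct computation shows that the value of the functional at $v\circ R$ with parameter $\zeta$ equals its value at $v$ with parameter $\pi-\zeta$, whence $G_k(\pi-\zeta)=G_k(\zeta)$.

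Applied to the explicit form above, (iv) forces $B=0$ (since $\sin(\pi-\zeta)=\sin\zeta$ and $\cos(\pi-\zeta)=-\cos\zeta$ flip the sign of the cross term), so $G_k(\zeta)=A\sin^2\zeta+C\cos^2\zeta$ with $A=G_k(\pi/2)>0$ and $C=G_k(0)<0$ by (ii). Differentiating, $G_k'(\zeta)=(A-C)\sin 2\zeta$ with $A-C>0$, so $G_k$ is in fact strictly increasing on the entire interval $(0,\pi/2)$, which yields both (v) and (vi) at once. The main obstacle I anticipate is constructing the reference function $\Phi_0$ and carrying out the Euler--Lagrange decomposition carefully on the unbounded slit plane $\R^2\setminus\Gamma^0_1$: specifically, verifying well-posedness of the linear problems defining $u_1,u_2\in\widetilde{\mathcal H}_0$ (which reduces to continuity of the relevant linear functionals with respect to the Dirichlet norm) and the book-keeping of traces in the verification of step (iv).
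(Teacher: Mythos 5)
Your proof is correct, and for parts (v) and (vi) it takes a genuinely different route from the paper. The paper proves (i)--(iv) essentially as you do (citing \cite[Theorem~6.8]{FelliNorisOgnibeneSiclari2023} for continuity, the negative-linear-term test function for $G(0)<0$, the sign flip $v\mapsto -v$ for (iii), and the reflection through $\Sigma_0$ together with the substitution $\gamma^0_-(v)=2|x|^{k/2}\sin\zeta-\gamma^0_+(v)$ and $\int_{S^0_1}|x|^{k-1}\,dS=1/k$ for (iv)). Where you diverge is in (v)--(vi): the paper stays at the level of the variational characterization, rewriting $G(\zeta)$ as a minimum of $\mathcal J_\zeta$ over a sign-restricted class $\mathcal V$, exploiting that $\zeta\mapsto c_1\sin^2\zeta - c_2\sin\zeta\cos\zeta$ is increasing on $(\pi/4,\pi/2)$ when $c_1,c_2\geq 0$, and for (vi) factoring out $\sin^2\zeta$ to exhibit a non-decreasing quotient. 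You instead perform the Euler--Lagrange decomposition $v=\sin\zeta\,\Phi_0+u$, $u\in\widetilde{\mathcal H}_0$, observe that the Riesz representative of the $\zeta$-dependent linear part is linear in $(\sin\zeta,\cos\zeta)$, and conclude $G_k(\zeta)=A\sin^2\zeta+B\sin\zeta\cos\zeta+C\cos^2\zeta$; then (iv) forces $B=0$, and $A=G(\pi/2)>0>G(0)=C$ gives $G_k'(\zeta)=(A-C)\sin 2\zeta>0$ on all of $(0,\pi/2)$. Your approach buys a strictly stronger conclusion --- strict monotonicity on the whole of $(0,\pi/2)$, not just on $(\pi/4,\pi/2)$ and on subintervals where $G>0$ --- which in particular subsumes the paper's Corollary~\ref{cor:prop-G} and gives (i) for free. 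The anticipated technical overhead you flag (constructing $\Phi_0\in\widetilde X_0$ with the correct jump, e.g.\ $\Phi_0(r\cos t,r\sin t)=\eta(r)\,r^{k/2}f_0(t)\cos(\tfrac k2 t)$, and continuity of the trace functional $u\mapsto\int_{S^0_1}|x|^{k/2-1}\gamma^0_+(u)\,dS$ with respect to the $\widetilde X_0$-norm) is exactly what is already secured in \cite[Proposition~6.4]{FelliNorisOgnibeneSiclari2023} for the minimization problem defining $\widetilde{\mathcal E}^u_\alpha$, so there is no real gap; the paper's argument trades the explicit quadratic form for a more elementary, if weaker, chain of estimates.
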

\begin{proof}
For the proof of (i) we refer to \cite[Theorem 6.8]{FelliNorisOgnibeneSiclari2023}.

To prove (ii), we observe that
$G(0)=\min_{\widetilde{\mathcal H}_0}J$, where \begin{equation*}
  J(\varphi)=\frac12\int_{\R^2\setminus\Gamma^0_1}|\nabla
  \varphi|^2\,dx+k\int_{S^0_1}|x|^{\frac k2-1}\gamma^0_+(\varphi)\,dS.
\end{equation*}
Fixing some $\varphi\in \widetilde{\mathcal H}_0$ such that
$\int_{S^0_1}|x|^{\frac k2-1}\gamma^0_+(\varphi)\,dS<0$, we observe
that
\begin{equation*}
  J(t\varphi)=
  \frac12t^2\int_{\R^2\setminus\Gamma^0_1}|\nabla \varphi|^2\,dx+kt\int_{S^0_1}|x|^{\frac k2-1}\gamma^0_+(\varphi)\,dS<0
\end{equation*}
for sufficiently small $t>0$, so that
$\min_{\widetilde{\mathcal H}_0}J<0$. On the other hand
\begin{equation*}
    G\bigg(\frac\pi2\bigg)=\min\left\{ \dfrac12\int_{\R^2\setminus\Gamma^0_1}|\nabla v|^2\,dx:
    v\in \widetilde X_0\text{ and } 
     \gamma^0_+(v)+\gamma^0_-(v)=2
     |x|^{\frac k2} \text{ on }S^0_1
     \right\}>0.
\end{equation*}
Replacing $v$ with $-v$ in the functional to be minimized in
\eqref{eq:defG}, we can equivalently characterize $G$ as
\begin{equation*}
    G(\zeta)=\min\left\{ \hskip-5pt
     \begin{array}{rr}
    &\hskip-10pt\dfrac12{\displaystyle{\int_{\R^2\setminus\Gamma^0_1}|\nabla v|^2\,dx}}-
     k   \cos\zeta
   {\displaystyle{\int_{S^0_1}
      |x|^{\frac k2-1}
     \gamma^0_+(v)\,dS}}- \cos\zeta\sin\zeta:\\[10pt]
     &v\in \widetilde X_0\text{ and } 
     \gamma^0_+(v)+\gamma^0_-(v)=-2
     |x|^{\frac k2}\sin\zeta \text{ on }S^0_1
     \end{array}
     \right\}.
\end{equation*}
From this, we directly conclude that $G(\zeta+\pi)=G(\zeta)$ for all
$\zeta\in\R$, thus proving statement (iii).

To prove (iv), we first rewrite \eqref{eq:defG} equivalently as
\begin{align}\label{eq:varieG}
    G(\zeta)&=\min\left\{ \hskip-5pt
     \begin{array}{rr}
    &\hskip-10pt\dfrac12{\displaystyle{\int_{\R^2\setminus\Gamma^0_1}|\nabla v|^2\,dx}}+
     k   \cos\zeta
   {\displaystyle{\int_{S^0_1}
      |x|^{\frac k2-1}
\Big(2
     |x|^{\frac k2}\sin\zeta-      
     \gamma^0_-(v)\Big)\,dS}}- \cos\zeta\sin\zeta:\\[10pt]
     &v\in \widetilde X_0\text{ and } 
     \gamma^0_+(v)+\gamma^0_-(v)=2
     |x|^{\frac k2}\sin\zeta \text{ on }S^0_1
     \end{array}\hskip-4pt
     \right\}\\
\notag     &=\min\left\{ \hskip-5pt
     \begin{array}{rr}
    &\hskip-10pt\dfrac12{\displaystyle{\int_{\R^2\setminus\Gamma^0_1}|\nabla v|^2\,dx}}-
     k   \cos\zeta
   {\displaystyle{\int_{S^0_1}
      |x|^{\frac k2-1}
     \gamma^0_-(v)\,dS}}+ \cos\zeta\sin\zeta:\\[10pt]
     &v\in \widetilde X_0\text{ and } 
     \gamma^0_+(v)+\gamma^0_-(v)=2
     |x|^{\frac k2}\sin\zeta \text{ on }S^0_1
     \end{array}
     \right\}\\
   \notag  &=\min\left\{ \hskip-5pt
     \begin{array}{rr}
    &\hskip-10pt\dfrac12{\displaystyle{\int_{\R^2\setminus\Gamma^0_1}|\nabla v|^2\,dx}}-
     k   \cos\zeta
   {\displaystyle{\int_{S^0_1}
      |x|^{\frac k2-1}
     \gamma^0_+(v)\,dS}}+ \cos\zeta\sin\zeta:\\[10pt]
     &v\in \widetilde X_0\text{ and } 
     \gamma^0_+(v)+\gamma^0_-(v)=2
     |x|^{\frac k2}\sin\zeta \text{ on }S^0_1
     \end{array}
     \right\},
\end{align}
where, in the last identity, we replaced $v$ with its reflection
through $\Sigma_0$ in the functional to be minimized. From the
characterization of $G$ given above, it follows directly that
$G(\pi-\zeta)=G(\zeta)$ for every $\zeta\in\R$, thus proving (iv).

In order to prove  (v), we observe that, in view of \eqref{eq:varieG},
\begin{equation*}
    G(\zeta)=\min\left\{ \hskip-5pt
     \begin{array}{rr}
    &\hskip-10pt\dfrac12{\displaystyle{\int_{\R^2\setminus\Gamma^0_1}|\nabla v|^2\,dx}}-
     k   \cos\zeta
   {\displaystyle{\int_{S^0_1}
      |x|^{\frac k2-1}
     \Big(\gamma^0_+(v)-|x|^{\frac k2}\sin\zeta\Big)\,dS}}:\\[10pt]
     &v\in \widetilde X_0\text{ and } 
     \gamma^0_+(v)+\gamma^0_-(v)=2
     |x|^{\frac k2}\sin\zeta \text{ on }S^0_1
     \end{array}
     \right\}.
\end{equation*}
For every $\zeta\in(0,\frac\pi2)$, by replacing $v$ with $v\sin\zeta$, we obtain that 
\begin{equation}\label{eq:altraG}
    G(\zeta)=\min\left\{ \hskip-5pt
     \begin{array}{rr}
    &\hskip-10pt\dfrac{\sin^2\zeta}2{\displaystyle{\int_{\R^2\setminus\Gamma^0_1}|\nabla v|^2\,dx}}-
     k   \cos\zeta\sin\zeta
   {\displaystyle{\int_{S^0_1}
      |x|^{\frac k2-1}
     \Big(\gamma^0_+(v)-|x|^{\frac k2}\Big)\,dS}}:\\[10pt]
     &v\in \widetilde X_0\text{ and } 
     \gamma^0_+(v)+\gamma^0_-(v)=2
     |x|^{\frac k2} \text{ on }S^0_1
     \end{array}
     \right\}.
\end{equation}
We observe that, if $v\in \widetilde X_0$ and  
     $\gamma^0_+(v)+\gamma^0_-(v)=2
     |x|^{\frac k2}$ on $S^0_1$, then necessarily 
     \begin{equation*}
     \text{either}\quad \int_{S^0_1}
      |x|^{\frac k2-1}
\gamma^0_+(v)\,dS\geq \int_{S^0_1}
      |x|^{k-1}\,dS\quad\text{or}\quad \int_{S^0_1}
      |x|^{\frac k2-1}
\gamma^0_-(v)\,dS\geq \int_{S^0_1}
      |x|^{k-1}\,dS.
      \end{equation*}
      Hence, we may minimize in \eqref{eq:altraG} only among functions
      such that
      $\int_{S^0_1} |x|^{\frac k2}( \gamma^0_+(v)-|x|^{k/2})\,dS\geq
      0$: indeed, for every $v$ for which that integral is negative,
      the functional takes a smaller value on the reflection of $v$
      through $\Sigma_0$ (which, on the other hand, remains in the
      constraint). Therefore
\begin{equation}\label{eq:GminV}
    G(\zeta)=\min_{\mathcal V}\mathcal J_\zeta
     \end{equation}
where
\begin{equation*}
    \mathcal J_\zeta(v)= \dfrac{\sin^2\zeta}2\int_{\R^2\setminus\Gamma^0_1}|\nabla v|^2\,dx-
     k   \cos\zeta\sin\zeta
   \int_{S^0_1}
      |x|^{\frac k2-1}
     \Big(\gamma^0_+(v)-|x|^{\frac k2}\Big)\,dS
\end{equation*}
and
\begin{equation*}
\mathcal V=\left\{
     v\in \widetilde X_0:
     \gamma^0_+(v)+\gamma^0_-(v)=2
     |x|^{\frac k2} \text{ on }S^0_1 \text{ and }
     \int_{S^0_1}
      |x|^{\frac k2}(
\gamma^0_+(v)-|x|^{k/2})\,dS\geq 0\right\}.
\end{equation*}
We observe that, for every $c_1,c_2\geq0$ with $(c_1,c_2)\not=(0,0)$,
the function $\zeta\mapsto c_1\sin^2\zeta-c_2\sin\zeta\cos\zeta$ is
strictly increasing in $(\frac\pi4,\frac\pi2)$. Hence, for every
$v\in\mathcal V$, the function $\zeta\mapsto \mathcal J_\zeta(v)$ is
strictly increasing in $(\frac\pi4,\frac\pi2)$; passing to the minimum
this directly implies that $G$ is strictly increasing in
$(\frac\pi4,\frac\pi2)$, thus proving (v).

We finally prove (vi). We assume that $I\subset (0,\frac\pi2)$ is an
interval such that $G> 0$ in $I$. For every $\zeta\in(0,\frac\pi2)$,
we can rewrite \eqref{eq:GminV} as
\begin{equation*}
    G(\zeta)=(\sin^2\zeta) g(\zeta),
\end{equation*}
where 
\begin{equation*}
    g(\zeta)=\min_{v\in \mathcal V}
    \left\{\frac12\int_{\R^2\setminus\Gamma^0_1}|\nabla v|^2\,dx-
     k  \frac{\cos\zeta}{\sin\zeta}
   \int_{S^0_1}
      |x|^{\frac k2-1}
     \Big(\gamma^0_+(v)-|x|^{\frac k2}\Big)\,dS\right\}.
\end{equation*}
Since the function $\zeta\mapsto \frac{\cos\zeta}{\sin\zeta}$ is
strictly decreasing in $(0,\frac\pi2)$, the function $g$ is non
decreasing in $(0,\frac\pi2)$. This, together with the fact that $g>0$
in $I$ and $\zeta\mapsto\sin^2\zeta$ is strictly increasing in $I$,
implies that $G$ is strictly increasing in $I$, as stated in (vi).
\end{proof}

\begin{corollary}\label{cor:prop-G}
  There exists $\zeta_0\in (0,\frac\pi2)$ such that $G(\zeta)>0$ for
  every $\zeta\in (\zeta_0,\frac\pi2]$ and $G(\zeta)\leq 0$ for every
  $\zeta\in [0,\zeta_0]$. Furthermore $G(\frac\pi2)>G(\zeta)$ for
  every $\zeta\in [0,\pi]\setminus\{\frac\pi2\}$.
\end{corollary}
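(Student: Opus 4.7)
The strategy is to combine the sign information in (ii) with the monotonicity property (vi) via a maximal-interval argument to pin down the sign structure of $G$ on $[0,\pi/2]$, and then invoke the symmetry (iv) to extend to $[0,\pi]$.

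First I would set $\zeta_0:=\inf\{\zeta\in[0,\pi/2]:G(\zeta)>0\}$. Continuity (i) together with $G(\pi/2)>0$ gives a neighborhood of $\pi/2$ contained in $\{G>0\}$, so $\zeta_0<\pi/2$; on the other hand $G(0)<0$ and continuity yield $\zeta_0>0$. By continuity $G(\zeta_0)\geq0$, and if $G(\zeta_0)>0$ then an entire neighborhood of $\zeta_0$ would lie in $\{G>0\}$, contradicting the definition of infimum; hence $G(\zeta_0)=0$.

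The key step is the claim that $\{\zeta\in[0,\pi/2]:G(\zeta)>0\}=(\zeta_0,\pi/2]$. By continuity this set is open in $[0,\pi/2]$, so its intersection with the open interval $(0,\pi/2)$ is a disjoint union of open intervals $(a_j,b_j)$. On each such component $G$ is strictly increasing by (vi). If $b_j<\pi/2$, then by maximality and continuity $G(b_j)=0$; but strict increase of $G$ on $(a_j,b_j)$ together with $\lim_{\zeta\to b_j^-}G(\zeta)=0$ would force $G<0$ on $(a_j,b_j)$, contradicting positivity. Hence any such component must satisfy $b_j=\pi/2$, so there is exactly one component, namely $(\zeta_0,\pi/2)$; adjoining $\pi/2$ (where $G>0$) proves the claim, and consequently $G\leq 0$ on $[0,\zeta_0]$ and $G>0$ on $(\zeta_0,\pi/2]$.

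For the second assertion, I would split $[0,\pi]\setminus\{\pi/2\}$ into three regions. On $[0,\zeta_0]$ one has $G(\zeta)\leq 0<G(\pi/2)$; on $(\zeta_0,\pi/2)$ the strict monotonicity from (vi), together with continuity at $\pi/2$, yields $G(\zeta)<G(\pi/2)$; on $(\pi/2,\pi]$ the symmetry (iv) gives $G(\zeta)=G(\pi-\zeta)$ with $\pi-\zeta\in[0,\pi/2)$, reducing to the previous two cases. I expect the main obstacle to be precisely the maximal-interval argument in step two: it hinges on the delicate interplay between strict monotonicity of $G$ on intervals of positivity and continuity at the right endpoint of such intervals, which together exclude any bounded component of $\{G>0\}$ strictly inside $(0,\pi/2)$ and so prevent oscillations of the sign of $G$.
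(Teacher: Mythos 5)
Your proof is correct and follows essentially the same strategy as the paper's: define $\zeta_0$ as the infimum of the set where $G>0$ on $(0,\pi/2)$, combine continuity (i), the endpoint signs (ii), and the conditional strict monotonicity (vi) to show the positivity set is a single right-end interval, and then use the reflection symmetry (iv) to handle $(\pi/2,\pi]$. The only notable difference is that you make the ``no bounded component of positivity'' argument fully explicit — a detail the paper leaves implicit — and you avoid invoking (v), which the paper cites but which, as your argument shows, is in fact dispensable here since (vi) plus continuity at $\pi/2$ already suffice.
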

\begin{proof}
  The existence of such a $\zeta_0$ follows by combining (i), (ii),
  and (vi) of Lemma \ref{l:prop-G} and letting
  $\zeta_0=\inf\{\zeta\in (0,\frac\pi2): G(\zeta)>0\}$. This, together
  with (v) and (vi) of Lemma \ref{l:prop-G}, directly implies that
  $\zeta=\frac\pi2$ is a strict global maximum point for $G$, which is
  the only one over the period $[0,\pi]$.
\end{proof}

\begin{remark}\label{rem:periodicity}
  We observe that \eqref{eq:cara-C} and Lemma \ref{l:prop-G}-(iii)
  imply that, for every $u\in E(\lambda_N^0)\setminus\{0\}$,
    \begin{equation*}
        \text{the function}\quad \alpha\mapsto \mathcal C(\alpha,u)\quad\text{is $\tfrac{2\pi}k$-periodic}.
    \end{equation*}
\end{remark}
\begin{theorem}\label{t:trasv}\quad 

  \begin{enumerate}[\rm (i)]
  \item If $u\in E(\lambda_N^0)\setminus\{0\}$, then there exist two  intervals $I_{u}^1,I_{u}^2\subset \R$ 
    with lengths $0<|I_{u}^1|<\frac\pi{k(u)}$ and $0<|I_{u}^2|<\frac\pi{k(u)}$ such that 
    \begin{equation*}
        \mathcal C(\alpha,u)< 0\quad\text{for all }\alpha    \in I_{u}^1+\tfrac{2\pi}{k(u)}\Z\quad\text{and}\quad 
        \mathcal C(\alpha,u)> 0\quad\text{for all }\alpha    \in I_{u}^2+\tfrac{2\pi}{k(u)}\Z.
    \end{equation*}
  \item If $u,v\in E(\lambda_N^0)\setminus\{0\}$ are such that
    $k(u)=k(v)=k$ and $\omega(u)\neq\omega(v)$, then there exists an
    interval $I_{u,v}\subset \R$ with positive length
    $0<|I_{u,v}|<\frac{2\pi}{k}$ such that
    \begin{equation*}
      \mathcal C(\alpha,u)\neq  \mathcal C(\alpha,v)\quad\text{for all }\alpha    \in I_{u,v}+\tfrac{2\pi}k\Z.
    \end{equation*}
\end{enumerate}
\end{theorem}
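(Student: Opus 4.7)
The plan is to reduce both statements to scalar properties of the auxiliary function $G_k$ via the explicit formula \eqref{eq:cara-C}, namely $\mathcal{C}(\alpha, u) = 2(\beta(u))^2 G_{k(u)}\bigl(\tfrac{k(u)}{2}(\alpha-\omega(u))\bigr)$. Since $\beta(u) \neq 0$, the sign of $\mathcal{C}(\cdot, u)$ coincides with that of $G_{k(u)}$ evaluated at the affine argument $\zeta = \tfrac{k(u)}{2}(\alpha - \omega(u))$, and comparing $\mathcal{C}(\cdot, u)$ with $\mathcal{C}(\cdot, v)$ amounts to comparing two rescaled, shifted copies of the same $G_k$.

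For part (i), I would fix $u$, write $k = k(u)$, $\omega = \omega(u)$, and read off from Lemma \ref{l:prop-G}(ii) that $G_k(0) < 0$ and $G_k(\tfrac{\pi}{2}) > 0$. Continuity of $G_k$ (Lemma \ref{l:prop-G}(i)) then provides small $\varepsilon_1, \varepsilon_2 \in (0, \tfrac{\pi}{4})$ on whose neighborhoods $G_k < 0$ and $G_k > 0$ respectively. Pulling back by $\zeta = \tfrac{k}{2}(\alpha-\omega)$, the intervals $I_u^1 := (\omega - \tfrac{2\varepsilon_1}{k}, \omega + \tfrac{2\varepsilon_1}{k})$ and $I_u^2 := (\omega + \tfrac{\pi}{k} - \tfrac{2\varepsilon_2}{k}, \omega + \tfrac{\pi}{k} + \tfrac{2\varepsilon_2}{k})$ have lengths in $(0, \tfrac{\pi}{k})$ and carry the required signs for $\mathcal{C}(\alpha, u)$; Remark \ref{rem:periodicity} (the $\tfrac{2\pi}{k}$-periodicity of $\mathcal{C}(\cdot, u)$) propagates these statements to $I_u^j + \tfrac{2\pi}{k}\Z$. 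This part is essentially a direct translation of Corollary \ref{cor:prop-G} and presents no real obstacle.

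For part (ii), with $k(u) = k(v) = k$ both $\mathcal{C}(\cdot, u)$ and $\mathcal{C}(\cdot, v)$ are continuous and $\tfrac{2\pi}{k}$-periodic. The key input is the last assertion of Corollary \ref{cor:prop-G}: $G_k$ admits a \emph{unique} strict global maximum at $\zeta = \tfrac{\pi}{2}$ on the period $[0, \pi]$. Pulling back, $\mathcal{C}(\cdot, u)$ attains its maximum on one $\alpha$-period at the single point $\alpha_u := \omega(u) + \tfrac{\pi}{k}$ (modulo $\tfrac{2\pi}{k}$), and analogously $\mathcal{C}(\cdot, v)$ at $\alpha_v := \omega(v) + \tfrac{\pi}{k}$. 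Because $\omega(u), \omega(v) \in [0, \tfrac{2\pi}{k})$ with $\omega(u) \neq \omega(v)$, the maximum locations $\alpha_u$ and $\alpha_v$ are distinct modulo the period. The task is then to show the two continuous functions are not identically equal; by continuity the set where they disagree is then open and non-empty, and contains an open interval $I_{u,v}$, which can be chosen of length less than $\tfrac{2\pi}{k}$, with periodicity yielding the conclusion.

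The main obstacle lies in the non-coincidence argument when the amplitudes are unequal, $(\beta(u))^2 \neq (\beta(v))^2$. In the equal-amplitude case the comparison is immediate: $\mathcal{C}(\alpha_u, u)$ achieves the common maximum value $2(\beta(u))^2 G_k(\tfrac{\pi}{2})$, whereas $\mathcal{C}(\alpha_u, v)$ is strictly smaller because $\alpha_u \neq \alpha_v$ modulo the period and the maximum of $G_k$ is strict. In the unequal-amplitude case, say $(\beta(v))^2 > (\beta(u))^2$, I would instead evaluate at the larger maximum point $\alpha_v$: using the crucial positivity $G_k(\tfrac{\pi}{2}) > 0$ from Corollary \ref{cor:prop-G}, one obtains $\mathcal{C}(\alpha_v, v) = 2(\beta(v))^2 G_k(\tfrac{\pi}{2}) > 2(\beta(u))^2 G_k(\tfrac{\pi}{2}) \geq \mathcal{C}(\alpha_v, u)$, so again the two functions differ at $\alpha_v$, completing the argument.
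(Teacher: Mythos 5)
Your argument is correct and follows essentially the same route as the paper: part (i) from the sign changes $G_k(0)<0$, $G_k(\tfrac\pi2)>0$ together with continuity and $\tfrac{2\pi}{k}$-periodicity of $\alpha\mapsto\mathcal C(\alpha,u)$, and part (ii) from the fact that $\zeta=\tfrac\pi2$ is the unique strict maximizer of $G_k$ over a period (Corollary \ref{cor:prop-G}). The only cosmetic difference is that you case-split on $(\beta(u))^2=(\beta(v))^2$ versus $(\beta(u))^2\neq(\beta(v))^2$, whereas the paper streamlines this into a single evaluation by assuming without loss of generality $|\beta(v)|\le|\beta(u)|$ and testing at $\alpha_0=\omega(u)+\tfrac{\pi}{k}$; both yield the same inequality $\mathcal C(\alpha_0,v)<\mathcal C(\alpha_0,u)$.
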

\begin{proof}
Let $u\in E(\lambda_N^0)\setminus\{0\}$.  By \eqref{eq:cara-C} and Lemma \ref{l:prop-G}-(ii) we have 
    \begin{equation*}
        \mathcal C(\omega(u), u)=2(\beta(u))^2 
        G_{k(u)}(0)<0\quad \text{and}\quad        
        \mathcal C\left(\omega(u)+\tfrac\pi{k(u)}, u\right)=2(\beta(u))^2 
        G_{k(u)}\left(\tfrac\pi2\right)>0,
    \end{equation*}
    so that statement (i) follows from the fact that the map
    $\alpha\mapsto \mathcal C(\alpha,u)$ is continuous in view of
    \eqref{eq:cara-C} and Lemma \ref{l:prop-G}-(i) and
    $\frac{2\pi}k$-periodic as observed in Remark
    \ref{rem:periodicity}.

    To prove (ii), it is not restrictive to assume
    $|\beta(v)|\leq|\beta(u)|$.  We are going to prove that, letting
    $\alpha_0=\omega(u)+\frac\pi{k(u)}$, we have
    \begin{equation}\label{eq:stat-transv}
        \mathcal C(\alpha_0,u)\neq         \mathcal C(\alpha_0,v).
    \end{equation}
    Once \eqref{eq:stat-transv} is proved, the conclusion follows from
    the continuity and the $\frac{2\pi}k$-periodicity of the map
    $\alpha\mapsto \mathcal C(\alpha,u)$.  To prove
    \eqref{eq:stat-transv} we observe that \eqref{eq:cara-C} implies
    \begin{equation*}
        \mathcal C(\alpha_0, u)=2(\beta(u))^2 
        G_{k}\left(\tfrac\pi2\right)\quad\text{and}\quad
        \mathcal C(\alpha_0, v)=2(\beta(v))^2 
        G_{k}\left(\tfrac\pi2+\omega_0\right),
    \end{equation*}
    where $k=k(u)=k(v)$ and
    $\omega_0=\tfrac{k}{2} (\omega(u)-\omega(v))$. Since
    $0<|\omega_0|<\pi$, from Corollary \ref{cor:prop-G} and Lemma
    \ref{l:prop-G}-(iii) it follows that
    $G_{k}\left(\tfrac\pi2+\omega_0\right)<G_{k}\left(\tfrac\pi2\right)$. Since
    $(\beta(v))^2 \leq (\beta(u))^2$ we conclude that
    $\mathcal C(\alpha_0,v)< \mathcal C(\alpha_0,u)$, thus proving
    \eqref{eq:stat-transv}.
\end{proof}

\section{Decomposition of the eigenspace}\label{sec:decomposition}

We provide here a suitable decomposition of the eigenspace
$E(\lambda_N^0)$ that will be useful to detect the behavior of
different perturbed eigenvalues, when the operator's pole is moving to
$0$ along a fixed direction.  See \cite[Proposition 1.10]{ALM2022} for
its counterpart for the Dirichlet Laplacian.

\begin{proposition}\label{prop:DecompES} 
  Under assumptions \eqref{eq:multiple}--\eqref{eq:index}, let
  $E(\lambda_N^0)$ be the eigenspace introduced in
  \eqref{eq:Elambda0}.  There exist a decomposition of
  $E(\lambda_N^0)$ into a sum of orthogonal subspaces
\[
E(\lambda_N^0)=E_1\oplus\dots\oplus E_p,
\] 
for some integer $p\geq 1$, with $\dim E_\ell=m_\ell\in\N\setminus\{0\}$, and 
$p$ distinct odd  natural numbers 
\[
k_1<\dots<k_p \in \N
\]
such that $k(\varphi)=k_\ell$ for every
$\varphi\in E_\ell\setminus\{0\}$ and $1\le \ell \le p$, i.e.,
according to the notation introduced in Proposition \ref{p:asyeige},
for every $\varphi\in E_\ell\setminus\{0\}$ there exist
$\beta(\varphi)\in\R\setminus\{0\}$ and
$\omega(\varphi)\in [0,\tfrac{2\pi}{k_\ell})$ such that
\begin{equation}\label{eq:orderofvanishing}
  r^{-k_\ell/2}\varphi(r\cos t,r\sin t)\to \beta(\varphi)
  e^{i\tfrac{t}{2}}
  \sin\Big(\tfrac{k_\ell}{2}(t-\omega(\varphi)\Big) \quad\text{in }C^{1,\tau}([0,2\pi],\C]),~\text{as }r\to 0.
\end{equation}
Moreover, if, for some $\ell\in\{1,\ldots,p\}$,
$\varphi,\psi \in E_\ell$ are linearly independent, then
$\omega(\varphi)\neq \omega(\psi)$.
\end{proposition}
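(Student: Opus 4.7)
The plan is to construct the decomposition by filtering $E(\lambda_N^0)$ according to the vanishing order at the origin. For each odd $k \in \N$, I would set
\[
V_k := \{\varphi \in E(\lambda_N^0) : \varphi = 0 \text{ or } k(\varphi) \geq k\}.
\]
Since the limits in \eqref{eq:131} depend linearly on $\varphi$, if $\varphi, \psi \in V_k$ then the leading profiles of orders $r^{k'/2}$ with $k' < k$ vanish for both, hence for any real linear combination, so each $V_k$ is a linear subspace. The descending chain $V_1 \supseteq V_3 \supseteq V_5 \supseteq \cdots$ must stabilize at $\{0\}$ since $\dim E(\lambda_N^0) = m < \infty$ and every eigenfunction has finite vanishing order. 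Let $k_1 < \cdots < k_p$ enumerate the odd integers $k$ at which $V_k \supsetneq V_{k+2}$, so that $V_{k_1} = E(\lambda_N^0)$, $V_{k_p+2} = \{0\}$, and $V_k$ is constant for $k \in (k_\ell,\, k_{\ell+1}]$ (with the convention $k_{p+1} := k_p + 2$).

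Next I would build the orthogonal pieces by descending induction: set $E_p := V_{k_p}$ and, for $\ell < p$, let $E_\ell$ be the orthogonal complement of $V_{k_{\ell+1}}$ inside $V_{k_\ell}$ with respect to the real inner product on $E(\lambda_N^0)$ inherited from $L^2_K(\Omega)$. By construction $V_{k_\ell} = E_\ell \oplus V_{k_{\ell+1}}$ orthogonally, and iterating yields the orthogonal sum $E(\lambda_N^0) = E_1 \oplus \cdots \oplus E_p$. To verify that $k(\varphi) = k_\ell$ for every nonzero $\varphi \in E_\ell$, note first that $\varphi \in V_{k_\ell}$ forces $k(\varphi) \geq k_\ell$; if instead $k(\varphi) \geq k_\ell + 2$, then $\varphi \in V_{k_\ell+2} = V_{k_{\ell+1}}$, and combining with $\varphi \perp V_{k_{\ell+1}}$ gives $\varphi = 0$, a contradiction (for $\ell = p$ we directly get $\varphi \in V_{k_p+2} = \{0\}$). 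This fixes $m_\ell := \dim E_\ell$ and the required vanishing order on each summand.

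For the last assertion, suppose for contradiction that $\varphi, \psi \in E_\ell$ are linearly independent with $\omega(\varphi) = \omega(\psi) =: \omega_0$. By \eqref{eq:orderofvanishing}, the leading profiles of $\varphi$ and $\psi$ are scalar multiples of $r^{k_\ell/2} e^{it/2} \sin\bigl(\tfrac{k_\ell}{2}(t - \omega_0)\bigr)$ with respective nonzero real coefficients $\beta(\varphi), \beta(\psi)$. The combination $\chi := \beta(\varphi)\psi - \beta(\psi)\varphi \in E_\ell$ then has vanishing order strictly greater than $k_\ell/2$, so by the previous paragraph $\chi = 0$, contradicting the linear independence of $\varphi, \psi$ (since $\beta(\varphi), \beta(\psi) \neq 0$). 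The main conceptual step is recognizing that the finite-dimensionality of $E(\lambda_N^0)$ reduces the construction to a filtration argument, and that the linearity of \eqref{eq:131} makes each $V_k$ a subspace; once this is in place, the remaining verifications are elementary linear algebra applied to the leading-order asymptotics.
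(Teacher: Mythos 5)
Your argument is correct and follows essentially the same route as the paper's proof: you filter $E(\lambda_N^0)$ by vanishing order, identify the jump points of the nested chain of subspaces, and carve out orthogonal complements iteratively, then establish the final assertion by observing that a linear combination canceling the leading asymptotic profile must vanish. The only cosmetic difference is that you index the filtration directly by the odd integer $k$ (your $V_k$) rather than by $j$ with $k = 2j+3$ (the paper's $N_j$), which in fact makes the bookkeeping and the resulting ordering $k_1 < \dots < k_p$ slightly cleaner.
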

\begin{proof}
    For every $j\in \N$, let $N_j$ be the linear subspace of $E(\lambda_N^0)$ given by 
    \[
    N_j=\{ u\in E(\lambda_N^0): k(u)\ge 2(j+1)+1\},
    \]
    with the agreement that $k(u)=+\infty$ if $u\equiv0$.  Then
    $\bigcap_{j\in\N}N_j=\{0\}$ and there exist $j_1\in\N$ such that
    $N_j=\{0\}$ for any $j\ge j_1$ and $N_j\neq \{0\}$ for any
    $j<j_1$.  Let
    \[
    J:=\{j\in\N: \dim N_{j-1}>\dim N_{j}  \},
    \]
    under the convention that $N_{-1}=E(\lambda_N^0)$.  Then
    $J\neq \emptyset$ because $j_1\in J$; moreover, $J$ is bounded
    because, if $j\in J$, then $j\le j_1$. Thus,
    \[
    J=\{j_p,\ldots,j_1\} \quad \text{with }0\le j_p<j_{p-1}<\ldots<j_1
    \]
    and 
    \[
    N_{j_1}=\{0\} \subset N_{j_2} \subset \ldots \subset N_{j_p} \subset E(\lambda_N^0)
    \]
    with strict inclusion. 
    Let us now define, for every $\ell\in\{1,\ldots,p\}$, the space
    \[
    E_\ell := N_{j_{\ell+1}} \cap N_{j_\ell}^\perp
    \]
    with the agreement that $j_{p+1}=-1$, so that $N_{j_{p+1}}=N_{-1}=E(\lambda_N^0)$. In this way 
    \[
    E(\lambda_N^0)=E_1\oplus\dots\oplus E_p,
    \]
    and, if $u\in E_\ell$, then its vanishing order is
    $k(u)\ge 2j_\ell+1$. If $k(u)> 2j_\ell+1$, then
    $k(u)\geq 2(j_\ell+1)+1$, so that $u\in N_{j_\ell}$; then, since
    $u\in N_{j_\ell}^\perp$, we necessarily have $u=0$. Therefore it
    must be $k(u)=2j_\ell+1$ for every $u\in E_\ell\setminus\{0\}$,
    proving the first part of the proposition with $k_\ell=2j_\ell+1$.

    Let us now consider $\varphi$ and $\psi$ in $E_\ell\setminus\{0\}$
    for some $\ell\in\{1,\dots,p\}$. We are going to prove that, if
    $\omega(\varphi)=\omega(\psi)=\omega_0$, then $\varphi$ and $\psi$
    are linearly dependent.  From \eqref{eq:orderofvanishing} we
    deduce
    \[
      r^{-\frac{k_\ell}{2}}\big( \beta(\varphi) \psi(r\cos t,r\sin t )
      - \beta(\psi)\varphi(r\cos t,r\sin t )\big) \to 0 \quad \text{as
      }r\to0
    \]
    uniformly with respect to $t$, so that
    $\beta(\varphi) \psi - \beta(\psi)\varphi \in E_\ell$ is such that
    $k\big(\beta(\varphi) \psi - \beta(\psi)\varphi\big)>k_\ell$.
    This implies that
    $\beta(\varphi) \psi - \beta(\psi)\varphi \equiv 0$, that is
    $\varphi$ and $\psi$ are linearly dependent. This concludes the
    proof.
    \end{proof}

    In the case where the eigenvalue $\lambda_N^0$ is double, i.e., if
    $\dim E(\lambda_N^0)=2$, the above decomposition result leads to
    the two alternative situations described in the following
    corollary.
\begin{corollary}\label{cor:m=2}
  If $m=\dim E(\lambda_N^0)=2$, then one of the following alternatives
  must hold:
\begin{enumerate}[\rm (i)]
\item there exist $k\in\N$ odd and an orthonormal basis
  $\{\varphi,\psi\}$ of $E(\lambda_N^0)$ such that
  $k(\varphi)=k(\psi)=k$ and $\omega(\varphi)\neq\omega(\psi)$;
\item there exist $k_1,k_2\in\N$ odd and an orthonormal basis
  $\{\varphi,\psi\}$ of $E(\lambda_N^0)$ such that $k(\varphi)=k_1$,
  $k(\psi)=k_2$, and $k_1<k_2$.
    \end{enumerate}
\end{corollary}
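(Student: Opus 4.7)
The plan is to apply Proposition \ref{prop:DecompES} directly, exploiting the fact that $m = 2$ severely restricts the possible shapes of the orthogonal decomposition $E(\lambda_N^0) = E_1 \oplus \cdots \oplus E_p$.

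First I would observe that, since $\sum_{\ell=1}^p m_\ell = \dim E(\lambda_N^0) = 2$ with each $m_\ell \geq 1$, the only possibilities are $p=1$ with $m_1 = 2$, or $p = 2$ with $m_1 = m_2 = 1$. These two situations correspond exactly to alternatives (i) and (ii) of the corollary.

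In the case $p = 1$, we have $E(\lambda_N^0) = E_1$ with $\dim E_1 = 2$, so every nonzero eigenfunction has the same vanishing order $k := k_1$. I would then pick any orthonormal basis $\{\varphi, \psi\}$ of $E(\lambda_N^0)$; since $\varphi$ and $\psi$ are linearly independent, the last assertion of Proposition \ref{prop:DecompES} gives $\omega(\varphi) \neq \omega(\psi)$, yielding alternative (i). In the case $p = 2$, I would pick unit vectors $\varphi \in E_1$ and $\psi \in E_2$; these are orthogonal by construction of the decomposition, form an orthonormal basis of $E(\lambda_N^0)$, and have vanishing orders $k(\varphi) = k_1 < k_2 = k(\psi)$ by the ordering in Proposition \ref{prop:DecompES}, giving alternative (ii).

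There is no real obstacle here: once Proposition \ref{prop:DecompES} is available, the proof is essentially a case analysis on how $2$ can be partitioned. The only very small point to check carefully is that, in case (ii), the chosen $\varphi \in E_1$ and $\psi \in E_2$ are orthogonal (which is immediate from the orthogonality of the sum) and that both $\varphi$ and $\psi$ are nonzero so that their vanishing orders $k_1$ and $k_2$ are well-defined and satisfy $k_1 < k_2$ by the strict ordering of the indices in Proposition \ref{prop:DecompES}.
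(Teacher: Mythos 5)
Your proposal is correct and follows essentially the same route as the paper: a case split on $p\in\{1,2\}$ in Proposition \ref{prop:DecompES}, with the final assertion of that proposition supplying $\omega(\varphi)\neq\omega(\psi)$ in the $p=1$ case and the choice of unit vectors in $E_1$ and $E_2$ handling the $p=2$ case. The only difference is that you spell out the appeal to the last assertion of Proposition \ref{prop:DecompES} explicitly, which the paper leaves implicit.
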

\begin{proof}
  Let $p$ be as in Proposition \ref{prop:DecompES}. Since $m=2$, we
  have either $p=1$ or $p=2$.

    If $p=1$ alternative (i) occurs, with $k=k_1$ and
    $\{\varphi,\psi\}$ being any orthonormal basis of the space
    $E_1=E(\lambda_N^0)$.

    If $p=2$ alternative (ii) occurs. Indeed, in this case Proposition
    \ref{prop:DecompES} provides an orthogonal decomposition
    \begin{equation*}
    E(\lambda_N^0)=E_1\oplus E_2,
    \end{equation*}
    with $\dim E_1=\dim E_2=1$, and odd natural numbers $k_1<k_2$ such
    that $k(\varphi)=k_1$ for every $\varphi\in E_1\setminus\{0\}$ and
    $k(\psi)=k_2$ for every $\psi\in E_2\setminus\{0\}$. Then (ii)
    follows choosing $\varphi\in E_1$ with
    $\int_\Omega |\varphi|^2\,dx=1$ and $\psi\in E_2$ with
    $\int_\Omega |\psi|^2\,dx=1$.
\end{proof}

\section{Ramification of eigenbranches from double eigenvalues}
\label{sec:ramification}

The study of the function $\mathcal C(\alpha,u)$, performed in Section
\ref{sec:functionC}, allows us to identify some directions for which
the eigenbranches in \eqref{eq:asymptEV} have different leading terms,
thus bifurcating. We succeed in doing this only in the case of double
eigenvalues, thus deducing the main result of the paper. The case of
eigenvalues with multiplicity greater than $2$ presents significantly
greater difficulties, due to intertwining dependence between the fixed
basis and the resulting quadratic form \eqref{eq:def-r_a}.  This will
be the object of a future investigation.

For $\alpha\in(-\pi,\pi]$ and $a=|a|(\cos\alpha,\sin\alpha)$, let
$r_a$ be the bilinear form defined in \eqref{eq:def-r_a}.  In view of
Proposition \ref{p:asyeige}, Theorem \ref{t:teo2.2FNOS}, and Lemma
\ref{l:prop4.6FNOS} we have
\begin{equation}\label{eq:ra1}
    r_a(u,w)=|a|^{\frac{k(u)+k(w)}{2}}\left(\mathcal R_\alpha (u,w)+o(1)\right)
  \quad\text{as }|a|\to0  \end{equation}
for every $u,w\in E(\lambda_N^0)\setminus\{0\}$, where 
\begin{align*}
    \mathcal R_\alpha (u,w)=&
    2\int_{S_1^\alpha}(\nabla \Psi^u_\alpha\cdot\nu_\alpha)  \gamma^\alpha_+(\widetilde{U}_\alpha^w)\,dS-2
    \int_{S_1^\alpha}(\nabla \Psi^u_\alpha\cdot\nu_\alpha) \Psi^w_\alpha\,dS\\
&\quad     +2\int_{S_1^\alpha}(\nabla \Psi^w_\alpha\cdot\nu_\alpha)  \gamma^\alpha_+(\widetilde{U}_\alpha^u)\,dS-2
    \int_{S_1^\alpha}(\nabla \Psi^w_\alpha\cdot\nu_\alpha) \Psi^u_\alpha\,dS
    +\int_{\R^2\setminus \Gamma_1^\alpha} \nabla \widetilde{U}_\alpha^u\cdot \nabla \widetilde{U}_\alpha^w\,dx.
\end{align*}
Furthermore, by \eqref{eq:rauu}   
\begin{equation}\label{eq:raca}
   \mathcal R_\alpha (u,u) =\mathcal C(\alpha,u)\quad\text{for every }u\in 
   E(\lambda_N^0)\setminus\{0\},
\end{equation}
where $\mathcal C(\alpha,u)$ is defined in \eqref{d:functionC}.

Let us assume that $m=\dim E(\lambda_N^0)=2$. Then, either alternative
(i) or alternative (ii) of Corollary \ref{cor:m=2} are
satisfied. Let's analyze these two cases one by one.

\smallskip {\bf Case 1: alternative (i) of Corollary \ref{cor:m=2} is
  satisfied}. Let $k\in\N$ odd and $\{\varphi,\psi\}$ be an
orthonormal basis of $E(\lambda_N^0)$ such that $k(\varphi)=k(\psi)=k$
and $\omega(\varphi)\neq\omega(\psi)$.  Then the eigenvalues
$\{\mu_j^a\}_{j=1}^2$ appearing in \eqref{eq:exp-mu-xi} coincide with
the eigenvalues of the $2\times 2$ symmetric real matrix
\begin{equation}\label{eq:matrixRa-m=2}
        \mathsf{R}_a=
        \begin{pmatrix}
          r_a(\varphi,\varphi)&  r_a(\varphi,\psi)\\
          r_a(\psi,\varphi)&r_a(\psi,\psi)
        \end{pmatrix}.
            \end{equation}
In view of \eqref{eq:ra1} and \eqref{eq:raca} the matrix $\mathsf{R}_a$ satisfies 
\begin{equation*}
        \mathsf{R}_a=
        |a|^k\left(
        \mathsf{R}(\alpha,\varphi,\psi)+o(1)\right),   
            \end{equation*}
 where 
\begin{equation}\label{eq:matrixRa}
            \mathsf{R}(\alpha,\varphi,\psi)=
        \begin{pmatrix}
          \mathcal C(\alpha,\varphi)& \mathcal R_\alpha(\varphi,\psi)\\
          \mathcal R_\alpha(\psi,\varphi)& \mathcal C(\alpha,\psi)
        \end{pmatrix}
\end{equation}
and $o(1)$ stands for a matrix with all entries vanishing to $0$ as
$|a|\to0$.  Denoting as $\mu_1(\alpha,\varphi,\psi)$ and
$\mu_2(\alpha,\varphi,\psi)$ the two real eigenvalues of the symmetric
real matrix $\mathsf{R}(\alpha,\varphi,\psi)$ (in ascending order),
the min-max characterization of eigenvalues ensures that
\begin{equation*}
  \mu_j^a=|a|^k\big(\mu_j(\alpha,\varphi,\psi)+o(1)\big)\quad\text{as }|a|\to0,\quad j=1,2.
\end{equation*}
Furthermore, from Theorem \ref{t:teo2.2FNOS} and Proposition
\ref{p:asyeige} the quantity $\chi_a$ defined in \eqref{eq:chiEps}
satisfies
\begin{equation*}
    \chi_a=O(|a|^{k/2})\quad\text{as }|a|\to0.
\end{equation*}
 From  Theorem \ref{thm:approxEVs} it then follows that 
\begin{equation}\label{eq:alt(i)}
    \lambda_{N+j-1}^a=\lambda_N^0+|a|^k\mu_j(\alpha,\varphi,\psi)+o(|a|^k) \mbox{ as }|a|\to0,\quad j=1,2.
\end{equation}
\smallskip {\bf Case 2: alternative (ii) of Corollary \ref{cor:m=2} is
  satisfied}.  Let $k_1,k_2\in\N$ odd and $\{\varphi,\psi\}$ be an
orthonormal basis of $E(\lambda_N^0)$ such that $k(\varphi)=k_1$,
$k(\psi)=k_2$, with $k_1<k_2$. From \eqref{eq:ra1} and \eqref{eq:raca}
it follows that
\begin{align*}
   & r_a(\varphi,\varphi)=|a|^{k_1}\mathcal C_\alpha(\alpha,\varphi)+o(|a|^{k_1})
  \quad\text{as }|a|\to0,\\
   & r_a(\varphi,\psi)=r_a(\psi,\varphi)=
   |a|^{\frac{k_1+k_2}{2}}\left(\mathcal R_\alpha (\varphi,\psi)+o(1)\right)=o(|a|^{k_1})
  \quad\text{as }|a|\to0, \\
  & r_a(\psi,\psi)=|a|^{k_2}\mathcal C_\alpha(\alpha,\psi)+o(|a|^{k_2})=o(|a|^{k_1})
  \quad\text{as }|a|\to0.
\end{align*}
Hence, in this case, the matrix $\mathsf{R}_a$ in \eqref{eq:matrixRa-m=2} satisfies
\begin{equation*}
    \mathsf{R}_a=|a|^{k_1}\left(
    \begin{pmatrix}
          \mathcal C(\alpha,\varphi)& 0\\
          0& 0
        \end{pmatrix}+o(1)\right)
        \quad\text{as }|a|\to0.
\end{equation*}
The min-max characterization of eigenvalues ensures that either
\begin{equation*}
\mu_1^a=|a|^{k_1}\mathcal C(\alpha,\varphi)+o(|a|^{k_1}),\quad 
\mu_2^a=o(|a|^{k_1})\text{ as }|a|\to0, \quad \text{if $\mathcal C(\alpha,\varphi)\leq 0$},
\end{equation*}
or 
\begin{equation*}
  \mu_1^a=o(|a|^{k_1}),\quad 
  \mu_2^a=|a|^{k_1}\mathcal C(\alpha,\varphi)+o(|a|^{k_1})
  \text{as }|a|\to0,\quad\text{if $\mathcal C(\alpha,\varphi)\geq 0$}.
\end{equation*}
By Theorem \ref{t:teo2.2FNOS} and Proposition \ref{p:asyeige} we have  
\begin{equation*}
    \chi_a=O(|a|^{k_1/2})\quad\text{as }|a|\to0.
\end{equation*}
Therefore, Theorem \ref{thm:approxEVs} yields either
\begin{equation}\label{eq:neg}
\lambda_{N}^a=\lambda_N^0+|a|^{k_1}\mathcal C(\alpha,\varphi)+o(|a|^{k_1})\quad
\text{and}\quad 
\lambda_{N+1}^a=\lambda_N^0+o(|a|^{k_1})\quad\text{as }|a|\to0
\end{equation}
if $\mathcal C(\alpha,\varphi)\leq0$, or 
\begin{equation}\label{eq:pos}
\lambda_{N}^a=\lambda_N^0+o(|a|^{k_1})\quad
\text{and}\quad 
\lambda_{N+1}^a=\lambda_N^0+|a|^{k_1}\mathcal C(\alpha,\varphi)+o(|a|^{k_1})\quad\text{as }|a|\to0
\end{equation}
if $\mathcal C(\alpha,\varphi)\geq0$.

We are now in position to prove Theorem \ref{t:genericity}.
\begin{proof}[Proof of Theorem \ref{t:genericity}]
  Since $m=\dim E(\lambda_N^0)=2$, either alternative (i) or
  alternative (ii) of Corollary \ref{cor:m=2} hold.

  If alternative (i) is satisfied, then by \eqref{eq:alt(i)} there
  exists $k\in\N$ odd such that
\begin{equation}\label{eq:expcasei}
    \lambda_{N}^a=\lambda_N^0+|a|^k\mu_1(\alpha,\varphi,\psi)+o(|a|^k) 
  \quad\text{and}\quad   
    \lambda_{N+1}^a=\lambda_N^0+|a|^k\mu_2(\alpha,\varphi,\psi)+o(|a|^k)
    \end{equation}
    as $a=|a|(\cos\alpha,\sin\alpha)$ and $|a|\to0$, with
    $\mu_1(\alpha,\varphi,\psi)\leq \mu_2(\alpha,\varphi,\psi)$ being
    the eigenvalues of the matrix \eqref{eq:matrixRa}, for
    $\{\varphi,\psi\}$ being an orthonormal basis of $E(\lambda_N^0)$
    as in Corollary \ref{cor:m=2}-(i).  Letting $I_{\varphi,\psi}$ be
    as in Theorem \ref{t:trasv}-(ii), for every
    $\alpha\in I_{\varphi,\psi}+\frac{2\pi}k\Z$ we have
    $\mathcal C(\alpha,\varphi)\neq\mathcal C(\alpha,\psi)$, and hence
    $\mu_1(\alpha,\varphi,\psi)\neq \mu_2(\alpha,\varphi,\psi)$;
    indeed a symmetric $2\times 2$ real matrix has distinct
    eigenvalues if the principal diagonal has distinct
    entries. The conclusion then follows from \eqref{eq:expcasei}.

    If alternative (ii) is satisfied, let $\{\varphi,\psi\}$ being an
    orthonormal basis of $E(\lambda_N^0)$ as in Corollary
    \ref{cor:m=2}-(ii), with $k_1=k(\varphi)<k_2=k(\psi)$. Let
    $I_{\varphi}^1,I_{\varphi}^2$ be as in Theorem
    \ref{t:trasv}-(i). If
    $\alpha\in I_{\varphi}^1+\frac{2\pi}{k_1}\Z$, then
    $\mathcal C(\alpha,\varphi)<0$, whereas, if
    $\alpha\in I_{\varphi}^2+\frac{2\pi}{k_1}\Z$, then
    $\mathcal C(\alpha,\varphi)>0$, so that
    \eqref{eq:neg}--\eqref{eq:pos} yield the conclusion.

Invoking Remark \ref{rem:analiticita-multipli} we achieve the
conclusion for $\alpha\in I_{\varphi,\psi}+\frac{\pi}k\Z$ in case (i)
and  $\alpha\in I_{\varphi}^2+\frac{\pi}{k_1}\Z$ in case (ii).
\end{proof}

\section{Applications to symmetric domains}\label{sec:symmetric}

Our results have some relevant consequences when the domain exhibits
symmetry properties.

\subsection{Invariance under a rotation}
As far as invariance by rotation is concerned, we prove Corollary \ref{c:rotdom}.
\begin{proof}[Proof of Corollary \ref{c:rotdom}]
  Let us denote as $\mathcal R_\ell$ the counterclockwise rotation of
  an angle $\tfrac{2\pi}{\ell}$. Under the assumption
  $\mathcal R_\ell(\Omega)=\Omega$, direct computations imply that
  $\lambda^a_k=\lambda_k^{\mathcal R_\ell a}$ for every $a\in\Omega$
  and $k\in\N\setminus\{0\}$. The conclusion then follows directly
  from Theorem \ref{t:genericity}.
\end{proof}
 
\subsection{Axially symmetric domains}\label{sec:axially-symm-doma}
Another relevant case, in which Theorem \ref{t:genericity} can provide more precise
information, arises when the domain $\Omega$ is axially
symmetric. Without losing generality, we assume that $\Omega$ is
symmetric with respect to the $x_1$-axis, i.e.  $x_2=0$.  In this
  case, we can take advantage of some invariance properties of
    the eigenspaces with respect to the operator $\mathfrak S$
    introduced in \cite{BNHHO2009} (see also \cite[Section
    3.2]{Abatangelo2019} and \cite[Section 3.2]{AFHL}) and recalled
    below.

    Let $\sigma:\R^2\to\R^2$, $\sigma(x_1,x_2)=(x_1,-x_2)$, be the
    reflection through the $x_1$-axis and assume that
    $\sigma(\Omega)=\Omega$. We define the antiunitary antilinear
    operator
\[
\mathfrak S:\ L^2 (D)\to L^2 (D),\quad \mathfrak S u := \bar u \circ \sigma,  
\]
$\bar u$ being the conjugate of $u$.  Then $\mathfrak S$ and
$(i\nabla +A_0)^2$ commute; furthermore, $u\in H^{1 ,0}(\Omega,\C)$
satisfies \eqref{eq:propertyP} if and only if $\mathfrak S u$
satisfies \eqref{eq:propertyP}. This implies that $E(\lambda_N^0)$ is
stable under the action of $\mathfrak S$, i.e.  $u\in E(\lambda_N^0)$
if and only if $\mathfrak Su\in E(\lambda_N^0)$.
  
By Proposition \ref{p:asyeige} and direct computations, we obtain the
following characterization of the quantities $\omega(\varphi)$ and
$k(\varphi)$ for eigenfunctions $\varphi$ which are invariant under
the actions of $\pm\mathfrak S$.
\begin{lemma}\label{l:omega-simm}
  Under the assumption $\Omega=\sigma(\Omega)$, let
  $\varphi \in E(\lambda_N^0)$ and $\omega(\varphi)$, $k(\varphi)$ be
  as in Proposition~\ref{p:asyeige}.
\begin{enumerate}[\rm (i)]
\item If $\varphi=-\mathfrak S \varphi$, then $\omega(\varphi)=0$.
\item If $\varphi=\mathfrak S \varphi$, then $\omega(\varphi)=\frac\pi {k(\varphi)}$.
\item If $\omega(\varphi)\neq \frac\pi {k(\varphi)}$, then
  $k(\varphi-\mathfrak S\varphi)=k(\varphi)$.
\item If $\omega(\varphi)\neq 0$, then $k(\varphi+\mathfrak S\varphi)=k(\varphi)$.
\end{enumerate}
\end{lemma}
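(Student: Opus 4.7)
The plan is to compute the action of $\mathfrak{S}$ on the leading order term in the expansion \eqref{eq:asy-u-k-reale} and then to read off all four statements by direct comparison. Starting from the identity $\sigma(r\cos t,r\sin t)=(r\cos(-t),r\sin(-t))$, substituting $t\mapsto-t$ in \eqref{eq:asy-u-k-reale} and taking complex conjugates (using that $\beta(\varphi)$ and the sine are real-valued), I find that $\mathfrak S\varphi$ is $K$-real and
\begin{equation*}
r^{-k/2}(\mathfrak S\varphi)(r\cos t,r\sin t)\to -\beta\,e^{it/2}\sin\!\left(\tfrac{k}{2}(t+\omega)\right)\quad\text{as }r\to 0^+,
\end{equation*}
where $k=k(\varphi)$, $\beta=\beta(\varphi)$, $\omega=\omega(\varphi)$; in particular $k(\mathfrak S\varphi)=k$.

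To put the right-hand side in the canonical form required by Proposition \ref{p:asyeige}, I would use the identity $\sin(\frac{k}{2}(t-\omega-\frac{2\pi}{k}))=-\sin(\frac{k}{2}(t-\omega))$ to move the phase into $[0,\frac{2\pi}{k})$. Distinguishing the boundary case $\omega=0$ from $\omega\in(0,\frac{2\pi}{k})$ gives the dictionary
\begin{equation*}
(\beta(\mathfrak S\varphi),\omega(\mathfrak S\varphi))=
\begin{cases}
(-\beta,\,0),& \text{if }\omega=0,\\[2pt]
(\beta,\,\tfrac{2\pi}{k}-\omega),& \text{if }\omega\in(0,\tfrac{2\pi}{k}).
\end{cases}
\end{equation*}
Statements (i) and (ii) then fall out immediately. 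The hypothesis $\varphi=\pm\mathfrak S\varphi$ forces $\omega(\varphi)=\omega(\mathfrak S\varphi)$ and $\beta(\varphi)=\pm\beta(\mathfrak S\varphi)$. For $\varphi=-\mathfrak S\varphi$, the second line of the dictionary would demand both $\omega=\frac{2\pi}{k}-\omega$ (so $\omega=\frac{\pi}{k}$) and $\beta=-\beta$, contradicting $\beta\neq 0$; hence the first line applies and $\omega(\varphi)=0$. For $\varphi=\mathfrak S\varphi$ the first line is excluded by the same $\beta\neq 0$ argument, so the second line applies and $\omega(\varphi)=\frac{\pi}{k}$.

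For (iii) and (iv), I would add or subtract the asymptotic expansions of $\varphi$ and $\mathfrak S\varphi$ in canonical form. When $\omega\in(0,\frac{2\pi}{k})$, using $\sin(\frac{k}{2}(t-(\frac{2\pi}{k}-\omega)))=-\sin(\frac{k}{2}(t+\omega))$ and the sum-to-product identities, the leading profiles become
\begin{equation*}
r^{-k/2}(\varphi-\mathfrak S\varphi)\to 2\beta\,e^{it/2}\sin\!\left(\tfrac{k}{2}t\right)\cos\!\left(\tfrac{k}{2}\omega\right),\qquad
r^{-k/2}(\varphi+\mathfrak S\varphi)\to -2\beta\,e^{it/2}\cos\!\left(\tfrac{k}{2}t\right)\sin\!\left(\tfrac{k}{2}\omega\right).
\end{equation*}
On $(0,\frac{2\pi}{k})$, the coefficient $\cos(\frac{k}{2}\omega)$ vanishes only at $\omega=\frac{\pi}{k}$, giving (iii); the coefficient $\sin(\frac{k}{2}\omega)$ never vanishes, giving (iv). The remaining boundary case $\omega=0$ in (iii) is handled by direct substitution, which yields $2\beta\,e^{it/2}\sin(\frac{k}{2}t)\neq 0$; in (iv) this boundary case is excluded by hypothesis. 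In each surviving case the leading coefficient is nonzero, so $k(\varphi\mp\mathfrak S\varphi)=k(\varphi)$.

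The only subtle point, and the main obstacle, is the normalization bookkeeping: keeping $\omega(\mathfrak S\varphi)$ inside $[0,\frac{2\pi}{k})$ forces a sign change in $\beta$ when shifting by $\frac{2\pi}{k}$, which is precisely why the two branches of the dictionary above behave differently and why $\omega=0$ and $\omega=\frac{\pi}{k}$ emerge as the two exceptional values appearing in (i)--(iv). Once this dictionary is in place, every item reduces to an elementary trigonometric identity.
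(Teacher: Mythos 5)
The paper leaves this lemma as a ``direct computation'' from Proposition~\ref{p:asyeige}, and your argument is precisely that computation, carried out correctly. The dictionary $(\beta(\mathfrak S\varphi),\omega(\mathfrak S\varphi))=(-\beta,0)$ for $\omega=0$ and $(\beta,\tfrac{2\pi}{k}-\omega)$ for $\omega\in(0,\tfrac{2\pi}{k})$ is right, and the sum-to-product identities yield the leading profiles $2\beta e^{it/2}\sin(\tfrac k2 t)\cos(\tfrac k2\omega)$ and $-2\beta e^{it/2}\cos(\tfrac k2 t)\sin(\tfrac k2\omega)$, from which (iii) and (iv) follow since $\varphi\mp\mathfrak S\varphi$ lies in the $\mathfrak S$-stable eigenspace $E(\lambda_N^0)$ and is $K$-real, so its order of vanishing is determined by whether that leading coefficient is zero.
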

Furthermore, if $\Omega$ is axially symmetric, it is possible to
derive the following more precise version of Corollary \ref{cor:m=2}.
\begin{corollary}\label{cor:m=2-simm}
  If $\Omega=\sigma(\Omega)$ and $m=\dim E(\lambda_N^0)=2$, then one
  of the following alternatives must hold:
\begin{enumerate}[\rm (i)]
\item there exist $k\in\N$ odd and an orthonormal basis
  $\{\varphi_a,\varphi_s\}$ of $E(\lambda_N^0)$ such that
  $\omega(\varphi_a)=0$, $\omega(\varphi_s)=\frac\pi k$, and
  $k(\varphi_a)=k(\varphi_s)=k$;
\item there exist $k_1,k_2\in\N$ odd and an orthonormal basis
  $\{\varphi_{1},\varphi_2\}$ of $E(\lambda_N^0)$ such that
  $k(\varphi_{1})=k_1$, $k(\varphi_2)=k_2$, $k_1<k_2$, and either
  $\omega(\varphi_1)=0$ or $\omega(\varphi_1)=\frac\pi {k_1}$.
    \end{enumerate}
\end{corollary}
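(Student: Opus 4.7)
The plan is to refine the basis supplied by Corollary \ref{cor:m=2} using the antiunitary involution $\mathfrak S$ introduced in Section \ref{sec:axially-symm-doma}. Since $\mathfrak S$ commutes with $(i\nabla+A_0)^2$ and preserves property \eqref{eq:propertyP}, the space $E(\lambda_N^0)$ is stable under $\mathfrak S$. Because $\mathfrak S^2=\mathrm{Id}$ and $\mathfrak S$ is real-linear on the real vector space $E(\lambda_N^0)$, I would obtain the decomposition
\[
  E(\lambda_N^0)=E^+\oplus E^-,\qquad E^\pm:=\{u\in E(\lambda_N^0):\mathfrak Su=\pm u\}.
\]
I would then verify that this sum is $L^2_K$-orthogonal: for $u_s\in E^+$ and $u_a\in E^-$ one has $\overline{u_s(x)}=u_s(\sigma(x))$ and $\overline{u_a(x)}=-u_a(\sigma(x))$, so the change of variable $y=\sigma(x)$ gives $(u_s,u_a)_{L^2_K}=-\overline{(u_s,u_a)_{L^2_K}}$; since the $L^2_K$-scalar product takes real values on $K$-real functions, this forces $(u_s,u_a)_{L^2_K}=0$.

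Since $\dim E(\lambda_N^0)=2$, the discussion then splits into three cases. \emph{Case A}: $\dim E^+=\dim E^-=1$. Here I would choose unit vectors $\varphi_a\in E^-$ and $\varphi_s\in E^+$, which form an orthonormal basis of $E(\lambda_N^0)$. Lemma \ref{l:omega-simm}(i)--(ii) gives $\omega(\varphi_a)=0$ and $\omega(\varphi_s)=\pi/k(\varphi_s)$. If $k(\varphi_a)=k(\varphi_s)=:k$, this is precisely alternative (i) of the statement. Otherwise, relabel as $\varphi_1,\varphi_2$ so that $k_1:=k(\varphi_1)<k(\varphi_2)=:k_2$; this produces alternative (ii), with $\omega(\varphi_1)=0$ if $\varphi_1\in E^-$ and $\omega(\varphi_1)=\pi/k_1$ if $\varphi_1\in E^+$. \emph{Cases B and C}: $E(\lambda_N^0)$ coincides with $E^+$ or $E^-$. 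By Lemma \ref{l:omega-simm}(ii) (respectively (i)), every nonzero $u\in E(\lambda_N^0)$ satisfies $\omega(u)=\pi/k(u)$ (respectively $\omega(u)=0$). In particular, any two eigenfunctions of equal vanishing order would automatically share the same $\omega$, which rules out alternative (i) of Corollary \ref{cor:m=2}; hence Corollary \ref{cor:m=2}(ii) applies, and its orthonormal basis $\{\varphi_1,\varphi_2\}$ already satisfies the required condition, with $\omega(\varphi_1)=\pi/k_1$ in Case B and $\omega(\varphi_1)=0$ in Case C.

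The only delicate point is the orthogonality of the $\mathfrak S$-eigenspace decomposition: since $\mathfrak S$ is antilinear over $\C$, the standard spectral orthogonality argument does not apply directly. The resolution relies on two features of the present setting, namely that $\sigma$ is an isometry of $\Omega$ (which makes the change of variable trivial) and that the $L^2_K$-scalar product takes real values on $K$-real functions. Once this structural fact is available, the statement reduces to the above case analysis on $\dim E^\pm$, combining Lemma \ref{l:omega-simm} with Corollary \ref{cor:m=2}.
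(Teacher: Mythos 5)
Your proof is correct, and it takes a genuinely different route from the paper's. The paper starts from the orthonormal basis $\{\varphi,\psi\}$ supplied by Corollary~\ref{cor:m=2} and, depending on the values of $\omega(\varphi)$ and $\omega(\psi)$, builds $\varphi_a,\varphi_s$ (or $\varphi_1,\varphi_2$) by explicitly symmetrizing or antisymmetrizing under $\mathfrak S$ in a five-fold case analysis (three sub-cases for alternative (i), two for alternative (ii)); orthogonality of the resulting pair is then checked at the end from orthogonality of $\varphi,\psi$ together with \eqref{eq:propertyP}. You instead observe at the outset that $\mathfrak S$ is a real-linear involution on the real vector space $E(\lambda_N^0)$, decompose $E(\lambda_N^0)=E^+\oplus E^-$ into its $\pm1$ eigenspaces, and prove $E^+\perp E^-$ directly via the change of variables $y=\sigma(x)$ and the fact that $(\cdot,\cdot)_{L^2_K}$ is real-valued on $K$-real functions. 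The case analysis then reduces to three cases on $(\dim E^+,\dim E^-)\in\{(1,1),(2,0),(0,2)\}$, with Lemma~\ref{l:omega-simm}(i)--(ii) pinning down $\omega$ in each. Your argument that in the cases $\dim E^\pm\in\{0,2\}$ alternative (i) of Corollary~\ref{cor:m=2} is excluded (because all elements of equal vanishing order would share the same $\omega$, contradicting Proposition~\ref{prop:DecompES}) is also correct. Your approach is more structural and avoids the repetitive sub-case constructions; the paper's approach is more hands-on but perhaps makes it clearer that a suitable basis is obtained by combining the given one with its $\mathfrak S$-images. Both are valid and yield the stated conclusion.
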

\begin{proof}
  Let $\{\varphi,\psi\}$ be an orthonormal basis of $E(\lambda_N^0)$
  as in Corollary \ref{cor:m=2}, either in case (i) or in case (ii).
  Let us first consider alternative (i) of Corollary
  \ref{cor:m=2}. Three cases can occur:
\begin{description}
\item[Case 1] $\omega(\psi)=0$. In this case, $\omega(\varphi)\neq0$
  by Corollary \ref{cor:m=2}. Then Lemma \ref{l:omega-simm} (i)--(ii)
  implies that $\psi-\mathfrak S \psi\neq 0$ and
  $\varphi+\mathfrak S \varphi\neq0$. Hence we can define
    \begin{equation*}
        \varphi_a=\frac{\psi-\mathfrak S \psi}{\|\psi-\mathfrak S \psi\|}_{L^2_K(\Omega)},\quad
                \varphi_s=\frac{\varphi+\mathfrak S \varphi}{\|\varphi+\mathfrak S \varphi\|_{L^2_K(\Omega)}},
    \end{equation*}
    noting that $k(\varphi_a)=k(\varphi_s)=k$ in view of Lemma
    \ref{l:omega-simm} (iii)--(iv).
  \item[Case 2] $\omega(\varphi)=\frac\pi k$. In this case,
    $\omega(\psi)\neq\frac \pi k$ by Corollary \ref{cor:m=2}. Then
    Lemma \ref{l:omega-simm} (i)--(ii) implies that
    $\varphi+\mathfrak S \varphi\neq 0$ and
    $\psi-\mathfrak S \psi\neq0$. Hence we can define
    \begin{equation*}
        \varphi_a=\frac{\psi-\mathfrak S \psi}{\|\psi-\mathfrak S \psi\|_{L^2_K(\Omega)}},\quad
                \varphi_s=\frac{\varphi+\mathfrak S \varphi}{\|\varphi+\mathfrak S \varphi\|_{L^2_K(\Omega)}},
    \end{equation*}
    noting that $k(\varphi_a)=k(\varphi_s)=k$ in view of  Lemma \ref{l:omega-simm} (iii)--(iv).
  \item[Case 3] $\omega(\varphi)\neq\frac\pi k$ and
    $\omega(\psi)\neq0$. In this case, Lemma \ref{l:omega-simm}
    (i)--(ii) implies that $\varphi-\mathfrak S \varphi\neq 0$ and
    $\psi+\mathfrak S \psi\neq0$. Hence we can define
    \begin{equation*}
        \varphi_a=\frac{\varphi-\mathfrak S \varphi}{\|
        \varphi-\mathfrak S \varphi\|_{L^2_K(\Omega)}},\quad
                \varphi_s=\frac{\psi+\mathfrak S \psi}{\|\psi+\mathfrak S \psi\|_{L^2_K(\Omega)}},
    \end{equation*}
    noting that $k(\varphi_a)=k(\varphi_s)=k$ in view of Lemma
    \ref{l:omega-simm} (iii)--(iv).

\end{description}
In all the cases described above, we have
$\mathfrak S \varphi_a=-\varphi_a$ and
$\mathfrak S \varphi_s=\varphi_s$, so that $\omega(\varphi_a)=0$ and
$\omega(\varphi_s)=\frac\pi k$ by Lemma \ref{l:omega-simm}.
Furthermore, the fact that $\varphi,\psi$ are orthogonal and satisfy
\eqref{eq:propertyP} implies that the functions $\varphi_a$ and
$\varphi_s$ constructed above are orthogonal, thus proving statement
(i).

Let us now consider alternative (ii) of Corollary \ref{cor:m=2}, i.e.
$k(\varphi)=k_1$, $k(\psi)=k_2'$, for some odd $k_1,k_2'$ with
$k_1<k_2'$. We distinguish two cases:
\begin{description}
\item[Case 1] $\omega(\varphi)\neq0$. In this case, Lemma
  \ref{l:omega-simm} (i) implies that
  $\varphi+\mathfrak S \varphi\neq0$. Hence we can define
    \begin{equation*}
      \varphi_{1}=\frac{\varphi+\mathfrak S \varphi}{\|\varphi+\mathfrak S \varphi\|_{L^2_K(\Omega)}},
      \quad 
      \varphi_2=\begin{cases}
        \frac{\psi-\mathfrak S \psi}{\|\psi-\mathfrak S \psi\|_{L^2_K(\Omega)}},&\text{if }\psi\neq \mathfrak S \psi,\\
        \psi,&\text{if }\psi= \mathfrak S \psi.
        \end{cases}
    \end{equation*}
    We have $k(\varphi_{1})=k(\varphi)=k_1$ in view of Lemma
    \ref{l:omega-simm} (iv), and
    $k_2:=k(\varphi_2)\geq k(\psi)=k_2'>k_1$.  Moreover,
    $\mathfrak S \varphi_{1}=\varphi_{1}$, hence
    $\omega(\varphi_{1})=\frac\pi {k_1}$ by Lemma \ref{l:omega-simm}
    (ii).
  \item[Case 2] $\omega(\varphi)=0$. In this case, Lemma
    \ref{l:omega-simm} (ii) implies that
    $\varphi-\mathfrak S \varphi\neq0$. Hence we can define
    \begin{equation*}
        \varphi_{1}=\frac{\varphi-\mathfrak S \varphi}{\|\varphi-\mathfrak S \varphi\|_{L^2_K(\Omega)}}  ,
        \quad 
        \varphi_2=\begin{cases}
            \frac{\psi+\mathfrak S \psi}{\|\psi+\mathfrak S
              \psi\|_{L^2_K(\Omega)}},
            &\text{if }\psi\neq -\mathfrak S \psi,\\
            \psi,&\text{if }\psi= -\mathfrak S \psi.
        \end{cases}
    \end{equation*}
    noting that $k(\varphi_{1})=k(\varphi)=k_1$ in view of Lemma
    \ref{l:omega-simm} (iii), while
    $k_2:=k(\varphi_2)\geq k(\psi)>k_1$.  Moreover, the fact that
    $\mathfrak S \varphi_{1}=-\varphi_{1}$ and Lemma
    \ref{l:omega-simm} (i) imply that $\omega(\varphi_{1})=0$.
\end{description}
In both the above two cases, direct computations yield that
$\varphi_1$ and $\varphi_2$ are orthogonal in $L^2_K(\Omega)$, thus
completing the proof in case (ii).
\end{proof}

We are now in position to prove Corollary \ref{c:symmdom}.
\begin{proof}[Proof of Corollary \ref{c:symmdom}]
  Let's go over the proof of Theorem \ref{t:genericity}, taking into
  account the improved version of Corollary \ref{cor:m=2} given by
  Corollary \ref{cor:m=2-simm}, which is available under the
  assumption of symmetry with respect to the $x_1$-axis.

  If alternative (i) of Corollary \ref{cor:m=2-simm} holds, then
  \eqref{eq:cara-C} and Lemma \ref{l:prop-G} ensure that
\begin{align*}
  &    \mathcal C(0,\varphi_a)=
    2(\beta(\varphi_a))^2 G_{k}(0)<0,\quad 
    \mathcal C(0,\varphi_s)=
    2(\beta(\varphi_s))^2 G_{k}\left(-\tfrac\pi2\right)>0,\\
  &    \mathcal C(\pi,\varphi_a)=
    2(\beta(\varphi_a))^2 G_{k}\left(\tfrac k2 \pi\right)>0,\quad 
    \mathcal C(\pi,\varphi_s)=
    2(\beta(\varphi_s))^2 G_{k}\left(\tfrac k2 \pi-\tfrac\pi2\right)<0.
\end{align*}
Then, for either $\alpha=0$ or $\alpha=\pi$ (and, by continuity, in
some neighbourhoods of $0$ and $\pi$), the entries on the diagonal of
the matrix \eqref{eq:matrixRa-m=2} are distinct. It follows that, in
some neighbourhoods of $0$ and $\pi$,
$\mu_1(\alpha,\varphi,\psi)\neq \mu_2(\alpha,\varphi,\psi)$ and
\eqref{eq:expcasei}, together with Remark \ref{rem:periodicity},
yields the conclusion.

If alternative (ii) of Corollary \ref{cor:m=2-simm} holds, then
\eqref{eq:cara-C} and Lemma \ref{l:prop-G} imply that
\begin{align*}
&    \mathcal C(0,\varphi_1)=
                 2(\beta(\varphi_1))^2 G_{k_1}(0)<0,\quad
\mathcal C(\pi,\varphi_1)=
    2(\beta(\varphi_1))^2 G_{k_1}\left(\tfrac k2\pi\right)>0,\quad
                 \text{if $\omega(\varphi_1)=0$},\\
&    \mathcal C(0,\varphi_1)=
    2(\beta(\varphi_1))^2 G_{k_1}\left(-\tfrac \pi2\right)>0,\quad 
\mathcal C(\pi,\varphi_1)=
                 2(\beta(\varphi_1))^2 G_{k_1}\left(\tfrac k2 \pi-\tfrac\pi2\right)<0,\quad
                 \text{if $\omega(\varphi_1)=\tfrac{\pi}{k_1}$}.
\end{align*}
Then, for either $\alpha=0$ or $\alpha=\pi$ (and, by continuity, in
some neighbourhoods of $0$ and $\pi$),
$\mathcal C(\alpha,\varphi_1)\neq0$. Hence the conclusion follows from
\eqref{eq:neg}--\eqref{eq:pos} and Remark \ref{rem:periodicity},.
\end{proof}

\subsection{Symmetries of the rectangle}
Recalling the reflection through the $x_1$-axis $\sigma$ introduced in
\S \ref{sec:axially-symm-doma} and denoting as
$\sigma'(x_1,x_2)=(-x_1,x_2)$ the reflection through the $x_2$-axis,
we say that $\Omega$ has the symmetries of a rectangle if
\begin{equation}\label{eq:simm-rett}
  \Omega=\sigma(\Omega)=\sigma'(\Omega).
\end{equation}
In \cite[Proposition 5.3]{BNHHO2009} it is observed that, if $\Omega$
has the symmetries of a rectangle, then the first eigenvalue of
problem $(E_0)$ is double; furthermore, the multiplicity of any
eigenvalue of $(E_0)$ is even. We also observe that a domain $\Omega$
satisfying \eqref{eq:simm-rett} is invariant under rotations of a
$\pi$-angle, so that $\lambda^a_k=\lambda_k^{-a}$ for every
$a\in\Omega$ and $k\in\N\setminus\{0\}$. Moreover, $\Omega$ is axially
symmetric with respect to both the $x_1$-axis and the $x_2$-axis;
therefore Corollary \ref{c:symmdom} guarantees that any double
eigenvalue of $(E_0)$ bifurcates into two simple eigenvalues of
\eqref{eq:eige_equation_a}, as the pole $a$ approaches the origin
along one of the two axes.  This, combined with the analyticity of the
restrictions to the lines through the origin observed in Remark
\ref{rem:analiticita-multipli}, allows us to observe that the two
branches into which the eigenvalue bifurcates 
are on opposite sides with respect to the limit eigenvalue, as
stated in Corollary \ref{cor:simmetria-rettangolo} and proved below.
 \begin{proof}[Proof of Corollary \ref{cor:simmetria-rettangolo}]
   The hypothesis of rectangular symmetry implies, in particular, that
   $\Omega$ is symmetric with respect to the $x_1$-axis. Therefore,
   the conclusion of Corollary \ref{c:symmdom} holds for a certain odd
   number $k$ and an interval $I$ of positive length such that
   $0\in I$.  In particular, for some $i\in \{N,N+1\}$, we have, for
   every $\alpha\in I$,
\begin{equation}\label{eq:svirett}
    \lambda_i^{t(\cos\alpha,\sin\alpha)}=\lambda_N^0+C(\alpha)t^k+o(t^k)\quad\text{as }t\to0^+,
\end{equation}
for some $C(\alpha)\neq0$.  By Remark \ref{rem:analiticita-multipli},
for every $\alpha\in I$ there exists a function
$t\mapsto \Lambda_\alpha(t)$ such that
$\Lambda_\alpha(0)=\lambda^0_N$, $\Lambda_\alpha$ is analytic in a
neighborhood of $0$,
$\Lambda_\alpha(t)=\lambda_i^{t(\cos\alpha,\sin\alpha)}$ for $t>0$
sufficiently small, and $\Lambda_\alpha(t)$ is equal to either
$\lambda_N^{t(\cos\alpha,\sin\alpha)}$ or
$\lambda_{N+1}^{t(\cos\alpha,\sin\alpha)}$ for $t<0$ small.  From
\eqref{eq:svirett} and analyticity of $\Lambda_\alpha$ it follows that
$\Lambda_\alpha(t)=\lambda_N^0+C(\alpha)t^k+o(t^k)$ as $t\to0$, so
that, by oddness of $k$,
  \begin{equation}\label{eq:expL}
      \Lambda_\alpha(t)=\lambda_N^0-C(\alpha)|t|^k+o(|t|^k)\quad\text{as } t\to0^-.
  \end{equation}
  By Remark \ref{rem:analiticita-multipli} there exists
  $j\in\{N,N+1\}$ such that
  $\Lambda_\alpha(t)=\lambda_j^{t(\cos\alpha,\sin\alpha)}$ for $t<0$
  small.  Since $\Omega$ is invariant under rotations of a
  $\pi$-angle, we have
  $\lambda_j^{t(\cos\alpha,\sin\alpha)}=\lambda_j^{-t(\cos\alpha,\sin\alpha)}
  =\Lambda_\alpha(-t)$ for $t>0$ sufficiently small, so that by
  \eqref{eq:expL}
      \begin{equation}\label{eq:svirett2}
    \lambda_j^{t(\cos\alpha,\sin\alpha)}=\lambda_N^0-C(\alpha)t^k+o(t^k)\quad\text{as }t\to0^+.
 \end{equation}
 Since $C(\alpha)\neq0$, \eqref{eq:svirett} and \eqref{eq:svirett2}
 imply that $i\neq j$ (either $i=N$ and $j=N+1$ or $i=N+1$ and $j=N$),
 thus yielding the conclusion for $\alpha\in I$ with
 $\mu(\alpha)=|C(\alpha)|$. Finally, the symmetries of the problem
 allow us to conclude for all $\alpha \in I+\frac{\pi}2 \Z$.
  \end{proof}

\subsection{The case of the disk}

Arbitrarily increasing the number of axial symmetries or rotations one
can reach the case of the disk, which is clearly symmetric with
respect to any line passing through the origin, as well as any
rotation in the plane.  We denote
$D := \{(x_1,x_2) \in \R^2:x_1^2+x_2^2=1\}$.

As already pointed out in \cite[Section 3.2]{Abatangelo2019}, the
Aharonov--Bohm operator on the disk presents several peculiar
properties.

\begin{lemma}\label{l:diskeigenfunctions}
  If the pole is located at the origin, all the eigenvalues of $(E_0)$
  with $\Omega=D$ are double. The set of eigenvalues is precisely
  $\{z_{n,k}^2:n\in \N, k\in \N \text{ odd}\}$, where the
  $\{z_{n,k}\}_{n\in\N}$ are the zeros of the Bessel function
  $J_{k/2}$. For every $k\in\N$ odd and $n\in \N$, the eigenspace
  associated with the eigenvalue $\lambda=z_{n,k}^2$ is generated by
  the two $K$-real functions
    \begin{equation}\label{eq:uv}
        u(r\cos t,r\sin t)=  B e^{i \frac{t}{2}} J_{\frac{k}{2}}(\sqrt{\lambda}r) 
\sin \left(\tfrac{k}{2}t\right), \quad
v(r \cos t, r\sin t) = - B e^{i \frac{t}{2}} J_{\frac{k}{2}}(\sqrt{\lambda}r) 
\cos \left(\tfrac{k}{2}t\right) ,
\end{equation}
for $r\in(0,1)$ and $t\in[0,2\pi)$, where the constant $B>0$
(depending on $\lambda$ and $k$) is chosen in such a way that
$\|u\|_{L^2_K(\Omega)}=\|v\|_{L^2_K(\Omega)}=1$; moreover, the
functions $u$ and $v$ in \eqref{eq:uv} have the following asymptotic
behavior at $0$:
\begin{align}
  &u(r\cos t,r\sin t)= \beta e^{i \frac{t}{2}} r^{k/2} 
\sin (\tfrac{k}{2}t) + O(r^{\tfrac{k+4}2}), \label{eq:asyeigenfudisk}\\
&v(r \cos t, r\sin t) = \beta  e^{i \frac{t}{2}} r^{k/2} 
\sin (\tfrac{k}{2}(t-\tfrac{\pi}{k})) + O(r^{\tfrac{k+4}2}), \label{eq:asyeigenfvdisk}
\end{align}
 as  $r \to 0^+$,
for a certain $\beta\in\R$ (depending on $\lambda$ and $k$).
\end{lemma}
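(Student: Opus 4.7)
The plan is to compute the spectrum by separation of variables in polar coordinates, exploiting the rotational symmetry of the disk and the $K$-real structure of eigenfunctions imposed by the half-integer circulation.

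First, I would write any $K$-real eigenfunction of $(E_0)$ as $\varphi(r\cos t,r\sin t) = e^{it/2}\phi(r,t)$ with $\phi$ real and $2\pi$-antiperiodic in $t$ (i.e.\ $\phi(r,t+2\pi) = -\phi(r,t)$), as forced by \eqref{eq:propertyP}. A direct computation shows that $(i\nabla+A_0)^2\varphi = \lambda\varphi$ is equivalent to $-\Delta\phi = \lambda\phi$ on $D\setminus\{0\}$ for the real function $\phi$ in the distributional sense on the double cover, with Dirichlet condition on $\partial D$. Expanding $\phi$ in a Fourier series that respects antiperiodicity,
\begin{equation*}
\phi(r,t) = \sum_{k\in\N \text{ odd}} \Big( a_k(r)\sin\!\big(\tfrac{k}{2}t\big) + b_k(r)\cos\!\big(\tfrac{k}{2}t\big)\Big),
\end{equation*}
I would substitute into $-\Delta\phi = \lambda\phi$ to find that each radial coefficient satisfies the Bessel equation of order $k/2$:
\begin{equation*}
-f'' - \tfrac{1}{r}f' + \tfrac{k^2}{4r^2}f = \lambda f.
\end{equation*}

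Next, demanding that the eigenfunction belong to $H^{1,0}_0(D,\C)$, the only admissible radial solution is $J_{k/2}(\sqrt{\lambda}\,r)$ (the Weber function $Y_{k/2}$ is excluded by the integrability near $0$ encoded in the norm of $H^{1,0}$, since $Y_{k/2}(\sqrt\lambda r)\sim r^{-k/2}$). The Dirichlet condition $\varphi=0$ on $\partial D$ then forces $J_{k/2}(\sqrt{\lambda})=0$, which gives the eigenvalues $\lambda = z_{n,k}^2$. For each such $\lambda$, both the $\sin(kt/2)$ and $\cos(kt/2)$ angular modes produce valid eigenfunctions, yielding the two independent generators $u$ and $v$ in \eqref{eq:uv}. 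I would then check that the multiplicity is exactly $2$, i.e.\ that for a given $\lambda = z_{n,k}^2$ no other pair $(n',k')$ with $k'\neq k$ odd gives the same squared zero; this is the classical irrationality/transcendence statement that zeros of distinct Bessel functions $J_{k/2}$, $J_{k'/2}$ do not coincide (see e.g.\ Bourget's hypothesis / Siegel's theorem), which I would cite from \cite{Abatangelo2019, BNHHO2009}.

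Finally, the asymptotics \eqref{eq:asyeigenfudisk}--\eqref{eq:asyeigenfvdisk} follow from the power series expansion
\begin{equation*}
J_{k/2}(x) = \tfrac{1}{\Gamma(k/2+1)}\big(\tfrac{x}{2}\big)^{k/2} - \tfrac{1}{\Gamma(k/2+2)}\big(\tfrac{x}{2}\big)^{k/2+2} + O(x^{k/2+4}),
\end{equation*}
so that $J_{k/2}(\sqrt\lambda\,r) = c\, r^{k/2} + O(r^{k/2+2})$ with an explicit constant $c$ depending on $\lambda$ and $k$; setting $\beta := Bc$ and rewriting $-\cos(kt/2) = \sin\big(\tfrac{k}{2}(t-\tfrac{\pi}{k})\big)$ gives the claimed expansions. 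The constant $B$ is then fixed by imposing $\|u\|_{L^2_K}=\|v\|_{L^2_K}=1$, using the standard identity $\int_0^1 J_{k/2}(z_{n,k}r)^2\,r\,dr = \tfrac12 J_{k/2}'(z_{n,k})^2$. The only subtle point is the double-eigenvalue claim, which rests on the Bessel zero separation recalled above; the remaining steps are routine separation-of-variables computations.
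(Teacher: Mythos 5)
Your proposal is correct and essentially reproduces (in full detail) what the paper delegates to a citation: the paper's own proof consists of a reference to Lemma~2.4 of \cite{Abatangelo2019} for the spectral characterization and the basis \eqref{eq:uv}, a remark that the $L^2_K$-normalization gives the same constant $B$ for both $u$ and $v$, and the observation that \eqref{eq:asyeigenfudisk}--\eqref{eq:asyeigenfvdisk} follow from the power series of $J_{k/2}$. Your derivation via separation of variables, the antiperiodic Fourier decomposition of the $K$-real gauge-transformed eigenfunction, the exclusion of $Y_{k/2}$ by the $H^{1,0}_0$ integrability near the origin, and the Dirichlet condition at $r=1$ is exactly the content of that cited lemma, so the two approaches coincide at the level of mathematical substance; yours simply unpacks the citation.

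You correctly isolate the one genuinely non-routine ingredient, namely that the multiplicity is \emph{exactly} two: this requires that zeros of $J_{k/2}$ and $J_{k'/2}$ with $k\neq k'$ odd never coincide, which is Bourget's hypothesis for rational order (proved by Siegel), and you rightly flag this as the point to outsource to the literature rather than attempt to prove. Your computation of the remainder order $O(r^{(k+4)/2})=O(r^{k/2+2})$ from the Bessel series, the rewriting $-\cos(\tfrac{k}{2}t)=\sin\bigl(\tfrac{k}{2}(t-\tfrac{\pi}{k})\bigr)$, and the normalization via $\int_0^1 J_{k/2}(z_{n,k}r)^2\,r\,dr=\tfrac12 J_{k/2}'(z_{n,k})^2$ are all accurate; and your observation that $u$ and $v$ share the same $B$ because $\int_0^{2\pi}\sin^2(\tfrac{k}{2}t)\,dt=\int_0^{2\pi}\cos^2(\tfrac{k}{2}t)\,dt=\pi$ matches the paper's remark. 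No gaps.
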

\begin{proof}
The first part of the lemma,
    including the characterization in \eqref{eq:uv} is proved in
    \cite[Lemma 2.4]{Abatangelo2019}. We observe that the constant $B$
    is the same for $u$ and $v$ by
    normalization in $L^2_K(\Omega)$. The expansions in
    \eqref{eq:asyeigenfudisk}--~\eqref{eq:asyeigenfvdisk} then follow
    directly from the well-known series expansion of the Bessel
    function $J_{k/2}$.
\end{proof}

From Corollary \ref{cor:simmetria-rettangolo} 
and rotational invariance of the disk, 
we finally deduce Corollary \ref{cor:disk}.

\begin{proof}[Proof of Corollary \ref{cor:disk}]
 By Lemma \ref{l:diskeigenfunctions} $\lambda$ is double; hence there
  exists $N$ such that
  \begin{equation*}
  \lambda_{N-1}^0<\lambda=\lambda_N^0=\lambda_{N+1}^0<\lambda_{N+2}^0.
\end{equation*}
Since the disk has, in particular, the symmetry of a rectangle, for
$\alpha=0$ fixed the conclusion follows from Corollary
\ref{cor:simmetria-rettangolo} with $\mathcal M=\mu(0)$.  In case of
any $\alpha$, it is then sufficient to exploit the rotation invariance
of the disk by any angle, which implies that $\lambda_k^a=\lambda_k^b$
for all $k$ if $|a|=|b|$.
\end{proof}

\bigskip\noindent {\bf Acknowledgments.}  The authors are members of
GNAMPA-INdAM.  L. Abatangelo is partially supported by the PRIN 2022
project 2022R537CS \emph{$NO^3$ - Nodal Optimization, NOnlinear
  elliptic equations, NOnlocal geometric problems, with a focus on
  regularity}, founded by the European Union - Next Generation EU.
V. Felli is partially supported by the PRIN 2022 project 20227HX33Z
\emph{Pattern formation in nonlinear phenomena} granted by the
European Union -- Next Generation EU.

\bibliographystyle{acm}
\bibliography{biblio}

\end{document}